\newcommand\R{\mathbb{R}}
\newcommand\Q{\mathbb{Q}}
\newcommand\Z{\mathbb{Z}}
\newcommand\integers{\mathbb{Z}}
\newcommand\Jhom{J}
\newcommand\JS{\mathcal{J}}
\newcommand\pr{\textup{pr}}
\newcommand{\iso}{\cong}
\newcommand{\boundary}{\partial}
\newcommand{\id}{\textup{id}}
\newcommand{\reals}{\mathbb{R}}
\newcommand{\Spin}{\textup{Spin}}
\DeclareMathOperator{\Pos}{\mathcal{R}^+_c}
\DeclareMathOperator{\Riem}{\mathcal{R}}
\DeclareMathOperator{\Inv}{\mathcal{R}^{inv}}
\DeclareMathOperator{\relind}{ind-diff}
\newcommand{\into}{\hookrightarrow}
\newcommand\aelt{a}
\newcommand\eot[1]{\prescript{}{2}{#1}}
\newcommand\im{\textup{Im}}
\newcommand\del{\partial}
\newcommand\wt[1]{\widetilde{#1}}
\newcommand\Diff{\textup{Diff}}
\newcommand{\coker}{\textup{coker}}
\newcommand{\xra}{\xrightarrow}
\newcommand{\an}[1]{\langle{#1}{\rangle}}
\newcommand{\wh}{\widehat}
\newcommand{\ol}{\overline}
\newcommand{\colim}{\mathrm{colim}}
\newcommand{\MSpin}{\mathbf{MSpin}}
\newcommand{\KO}{\mathbf{KO}}
\newcommand{\bfS}{\mathbf{S}}
\newcommand{\bfR}{\mathbf{R}}
\theoremstyle{plain}
\newtheorem{theorem}{Theorem}[section]
\newtheorem{lemma}[theorem]{Lemma}
\newtheorem{corollary}[theorem]{Corollary}
\theoremstyle{definition}
\newtheorem{definition}[theorem]{Definition}
\theoremstyle{remark}
\newtheorem{remark}[theorem]{Remark}
\newcommand{\remdc}[1]{#1}
\newcommand{\remts}[1]{#1}
\newcommand{\remws}[1]{#1}
\date{\today}
\begin{document}

\title{Harmonic spinors and\\ metrics of positive 
curvature \\via the Gromoll filtration and Toda brackets}
\author{Diarmuid Crowley%
\thanks{
\protect\href{mailto:dcrowley@unimelb.edu.au}{e-mail:
  dcrowley@unimelb.edu.au}
\protect\\
\protect\href{http://www.dcrowley.net/}{www:~http://www.dcrowley.net/}
}\\
School of Mathematics and Statistics, \\University of Melbourne, Australia
\and Thomas Schick%
\thanks{
\protect\href{mailto:thomas.schick@math.uni-goettingen.de}{e-mail:
  thomas.schick@math.uni-goettingen.de}
\protect\\
\protect\href{http://www.uni-math.gwdg.de/schick}{www:~http://www.uni-math.gwdg.de/schick}
}\\
Mathematisches Institut, \\Universit\"at G{\"o}ttingen, Germany
\and Wolfgang Steimle%
\thanks{
\protect\href{mailto:wolfgang.steimle@math.uni-augsburg.de}{e-mail: wolfgang.steimle@math.uni-augsburg.de}
\protect\\
\protect\href{http://www.math.uni-augsburg.de/prof/diff/arbeitsgruppe/steimle}{www:~http://www.math.uni-augsburg.de/prof/diff/arbeitsgruppe/steimle}
}\\
Institut f\"ur Mathematik, Universit\"at Augsburg, Germany
}

\maketitle

\begin{abstract}
  We construct non-trivial elements of order $2$ in the homotopy groups $\pi_{8j+1+*}
\Diff(D^6,\del)$ {for $*\equiv 1,2 \pmod 8$}, which are detected
through the chain  
  \[\pi_{{8j+1+*}} \Diff(D^6,\del)\to \pi_0 \Diff(D^{{8j+*+7}},\del)\to {KO_{*}}=\integers/2\]
  of the ``assembling homomorphism'' 
  (giving rise to the Gromoll filtration)
  and the alpha-invariant.  

  These elements are constructed by means of Morlet's homotopy equivalence
  $\Diff(D^6,\del)\simeq \Omega^{7}(PL_6/O_6)$ and Toda brackets in $PL_6/O_6$.
  We 
  {also} construct non-trivial elements of order $2$ in
  $\pi_{*}PL_m$ for every $m\ge 6$ and $*\equiv 1,2\pmod 8$ which are
  detected by the alpha-invariant.
  
  As consequences, we (a) obtain non-trivial elements of order $2$ in $\pi_{*}
  \Diff(D^m,\del)$ for $m\geq 6$ such that $*+m\equiv 0,1\pmod 8$; 
  (b) these elements
  remain non-trivial in $\pi_{*} \Diff(M)$ where $M$ is a closed spin manifold
  of the same dimension $m$ and $*>0$; 
  (c) they act non-trivially on the
  corresponding homotopy group of the space of metrics of positive scalar
  curvature of such an $M$; 
  in particular these homotopy groups are all non-trivial. The same applies to all other diffeomorphism
  invariant metrics of positive curvature, like the space of metrics of
  positive sectional curvature, or the space of metrics of positive Ricci
  curvature, provided they are non-empty.
  
  Further consequences are: (d) any closed spin manifold of dimension
  $m\geq 6$ admits a metric with harmonic spinors; 
  (e) there is no analogue of the odd-primary splitting of $(PL/O)_{(p)}$ 
  for the prime $2$;
  (f) for any $bP_{8j+4}$-sphere $(j\geq 1)$ of order which divides $4$, the corresponding element in $\pi_0  \Diff(D^{8j+2},\del)$ lifts to $\pi_{8j-4}\Diff(D^6,\del)$,
  i.e.~lies correspondingly deep down in the Gromoll filtration.
\end{abstract}


\section{Introduction} \label{sec:intro}
We use the Gromoll filtration \cite{G} of 
$\Gamma^{n+1} = \pi_0(\Diff(D^n, \del))$ to study the topology of
spaces of metrics of positive curvature
and the topology of diffeomorphism groups 
for closed spin manifolds.


This Gromoll filtration $\dots\subset\Gamma^{n+1}_{(k)}\subset
\Gamma^{n+1}_{(k+1)}\subset \dots \subset \Gamma^{n+1}_{(n)}=\Gamma^{n+1}$ is defined using the homomorphisms
\[  \lambda \colon \pi_{n-k}(\Diff(D^k, \del)) \to \pi_0(\Diff(D^n, \del)) = \Gamma^{n+1} \]
simply by setting
\[ \Gamma^{n+1}_{(k)} : = {\rm Im}(\lambda) \subset \Gamma^{n+1}.\]
Here $\lambda$ interprets a smooth family of diffeomorphisms of $D^k$ parametrized
by $D^{n-k}$ as one diffeomorphism of $D^{n}$ (which preserves the first $n{-}k$
coordinates). Our notation is somewhat non-standard, $\Gamma^{n+1}_{(k)}$ is
supposed to reflect the $k$-dimensional ``disk of origin'', as this is the
relevant parameter for our applications. (The more traditional notation for what we call $\Gamma^{n+1}_{(k)}$ is $\Gamma^{n+1}_{n-k-1}$.)

Our main result is that certain important classes in
$\Gamma^{n+1}$ have lifts all the way to
$\Gamma^{n+1}_{(6)}$. ``Important'' here refers in particular to classes which
have non-trivial $\alpha$-invariant, defined as follows and coinciding\footnote{as proved in
\cite[Section 3]{M}} with
Adams' invariant $d_{\reals}$ of \cite[Section 7]{Adams}. We consider the $\alpha$-invariant as a
homomorphism to real $K$-homology (of a point) 
\begin{equation*}
  \alpha_\Gamma\colon \Gamma^{n+1}\to KO_{n+1},
\end{equation*}
which factors in the following way
\begin{equation}\label{eq:alpha_factorized}
  \begin{CD}
 \alpha_\Gamma\colon   \Gamma^{n+1} @>{\iso}>{\Sigma}> \Theta_{n+1} @>>> \Omega_{n+1}^{\Spin} @>{\alpha_{\Spin}}>> KO_{n+1}.
  \end{CD}
\end{equation}
Here, $\Theta_{n+1}$ is the group of oriented diffeomorphism classes of 
\remdc{homotopy}
spheres, and the isomorphism $\Sigma$ produces an exotic
$(n+1)$-sphere from a diffeomorphism in $\Gamma^{n+1}$ by extending the latter
by the identity map to $S^{n}$ and then clutching two $(n{+}1)$-disks using this
diffeomorphism of $S^n$. The map to $\Omega^{\Spin}_{n+1}$ assigns to a
\remdc{homotopy sphere}
the spin bordism class it represents (having a unique spin
structure). Finally, the transformation $\alpha_{\Spin}$ is
the so-called Atiyah orientation; it assigns to a spin manifold the KO-valued
index of its Dirac operator. 

We will use many different versions of ``$\alpha$-invariant'' homomorphisms,
defined on different spaces. In most cases, we will not distinguish them in
notation but rather just write $\alpha$, the precise setting will be clear
from the context.

Recall that $\Gamma^{n+1}$ is a finite abelian group for each $n$, and
$KO_{n+1}=\integers/2\integers$ if $n\equiv 0,1$ modulo $8$, but is zero or
infinite cyclic for all other degrees. Therefore, $\alpha_\Gamma$ is
only interesting for $n\equiv 0,1\pmod 8$. It is a well known result of Adams
\cite[Section 7 and 12]{Adams} that $\alpha$ is a split 
epimorphism in these cases (if $n>0$). Our main result improves this by
constructing some elements with non-trivial $\alpha$-invariant deep in the
Gromoll filtration:

\begin{theorem} \label{thm:1}
For all $j \geq 1$ and $\epsilon \in \{1, 2\}$, 
there is a homotopy $(8j+\epsilon)$-sphere with disk of origin not bigger than $6$ and non-trivial $\alpha$-invariant, which is of order two in the group of homotopy spheres. 
In fact, somewhat more is true, namely
\[ \alpha \colon \pi_{8j-7+\epsilon}(\Diff(D^6, \del)) 
\to \Gamma^{8j+\epsilon}_{(6)}
\to KO_{8j+\epsilon} \]
is split surjective. 
\end{theorem}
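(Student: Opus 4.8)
The plan is to realize the desired classes in $\pi_{8j-7+\epsilon}(\Diff(D^6,\del))$ via the chain of maps in the theorem, using Morlet's equivalence $\Diff(D^6,\del)\simeq\Omega^7(PL_6/O_6)$ to transport the problem into homotopy theory. Under this equivalence, the assembling homomorphism $\lambda$ becomes a desuspension/looping statement, and the composite
\[
\pi_{8j-7+\epsilon}(\Diff(D^6,\del)) \xrightarrow{\ \lambda\ } \Gamma^{8j+\epsilon} \xrightarrow{\ \alpha\ } KO_{8j+\epsilon}
\]
should be identified with a map out of $\pi_{8j+\epsilon}(PL_6/O_6)$, composed with the natural map $PL_6/O_6 \to PL/O$ and the known factorization of $\alpha$ through $\Theta_{8j+\epsilon}$ and $\Omega_{8j+\epsilon}^{\Spin}$. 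So the first step is to set up this dictionary carefully and reduce the claim to the existence of a class in $\pi_{8j+\epsilon}(PL_6/O_6)$ whose image in $KO_{8j+\epsilon}=\integers/2$ is non-zero (and to check order two, that the class has order dividing two, e.g.\ because it lies in the image of a $2$-torsion group or by a Toda-bracket indeterminacy argument).

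Second, I would construct the relevant class in $\pi_{8j+\epsilon}(PL_6/O_6)$ by means of a Toda bracket. The input is the well-known fact (Adams) that $\alpha$ is split surjective onto $KO_{8j+\epsilon}$ in the relevant degrees, realized already in low-dimensional unstable homotopy — concretely, one expects the elements $\eta$ and $\eta^2$ (the Hopf classes, $\epsilon=1,2$) paired with a generator of the image-of-$J$ part in degree $8j$. The point of the Toda bracket is to assemble a stable relation that is witnessed \emph{unstably} in $PL_6/O_6$: one takes a bracket of the form $\langle\, \cdot\,,\eta,\cdot\,\rangle$ (or its $\eta^2$-variant) whose entries are supported in the low-dimensional space $PL_6/O_6$, so that the resulting class automatically has disk of origin $\le 6$. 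The compatibility of Toda brackets with the maps $PL_6/O_6\to PL/O\to\mathbf{KO}$, together with the corresponding Toda-bracket description of the image of $J$ and of the $\alpha$-invariant in $KO$-theory, then forces the image in $KO_{8j+\epsilon}$ to be the non-zero element, and the indeterminacy controls the order.

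Third, I would verify splitness: since $KO_{8j+\epsilon}=\integers/2$, split surjectivity just means surjectivity together with the target being a summand, which is automatic once we have a class mapping to the generator and of order two; alternatively one notes the splitting is inherited from Adams' stable splitting of $\alpha\colon\Theta_{8j+\epsilon}\to KO_{8j+\epsilon}$ by naturality. The first statement of the theorem — existence of a homotopy $(8j+\epsilon)$-sphere of order two with disk of origin $\le 6$ and non-trivial $\alpha$-invariant — is then the image of such a class under $\lambda$, using that $\Gamma^{8j+\epsilon}\cong\Theta_{8j+\epsilon}$ and that the constructed element has order two.

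The main obstacle I anticipate is the \emph{unstable} bookkeeping: Toda brackets behave well stably, but here everything must be carried out in the finite-dimensional space $PL_6/O_6$ (equivalently in $\pi_{8j-7+\epsilon}\Diff(D^6,\del)$ after seven desuspensions), so one must check that the bracket's defining null-homotopies exist at this unstable level, that the relevant homotopy groups of $PL_6/O_6$ in the pertinent range are understood well enough (via $\pi_*(PL_6/O_6)$ versus $\pi_*(PL/O)$ and the $\alpha$-invariant's behavior there), and that the Toda-bracket indeterminacy does not swallow the class — i.e.\ that the non-triviality survives the passage $PL_6/O_6\to PL/O$ and the identification with $KO_{8j+\epsilon}$. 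Controlling this indeterminacy and the low-dimensional homotopy of $PL_6/O_6$ is where the real work lies; the rest is formal naturality of the $\alpha$-invariant and Morlet's theorem.
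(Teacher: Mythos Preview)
Your overall architecture matches the paper's exactly: reduce via Morlet's equivalence to producing a class of order two in $\pi_{8j+\epsilon}(PL_6/O_6)$ with non-trivial $\alpha$-invariant, build that class as a Toda bracket, and read off $\alpha$ by naturality through $PL/O\to G/O\to\Omega^\infty\MSpin\to KO$. The split surjectivity argument you give (order two plus target $\Z/2$) is also what the paper uses.

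However, the specific Toda bracket you propose is not the one that works, and this is where the actual content lies. You guess a bracket of shape $\langle\,\cdot\,,\eta,\,\cdot\,\rangle$ with an image-of-$J$ class in degree $8j$; the paper instead uses
\[
\langle\, \mu_{8j-8+\epsilon,7},\; 2_7,\; a_{PL_6/O_6}\,\rangle \subset \pi_{8j+\epsilon}(PL_6/O_6),
\]
where $a_{PL_6/O_6}\in\pi_7(PL_6/O_6)\cong\Z/28$ is the unique element of order two (the $14$th multiple of the Milnor generator), $2_7$ is the degree-$2$ self-map of $S^7$, and $\mu_{8j-8+\epsilon,7}\in\pi_{8j-1+\epsilon}(S^7)$ is an unstable Adams $\mu$-class of order two with $\alpha(\mu)=1$. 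The middle entry being $2$ (not $\eta$) is essential: it is what makes the bracket defined, and it is what allows the $\alpha$-computation to go through via mod-$2$ Moore-space homotopy $\pi_7^M$ and a snake-lemma argument. The key point you do not mention is that $a_{PL/O}$ is a $bP$-sphere, hence dies in $\pi_7(G/O)$; this produces a lift $\delta(a)\in\pi_8(G/O)$ whose $\alpha$-invariant is shown to be odd, and then $\alpha(\langle f,2,a\rangle)=\alpha(\delta(a))\cdot\alpha(f)=1$ in $KO_{8j+\epsilon}$. The order-two statement comes from the fact that $a_{PL_6/O_6}\in 2\pi_7(PL_6/O_6)$, so $a\circ\eta=0$, which via a Moore-space lemma forces any lift $\bar a\in\pi_7^M(PL_6/O_6)$ to have order two; one then realizes the bracket element as $\bar a\circ\overline{Sf}$ with $\overline{Sf}$ a suspension, hence a co-$H$-map, so the composite inherits order two. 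None of this is visible from a bracket with $\eta$ in the middle, and your ``image-of-$J$ in degree $8j$'' input does not live in $\pi_*(PL_6/O_6)$ in any evident way.
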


In \cite{C-S} it was proven that $\alpha(\Gamma^{8j+2}_{(7)}) = KO_{8j+2}$.
In this paper we improve this result in two ways: we reduce the disk of origin
by one to $D^6$, and we also cover the dimensions $8j{+}1$.

To our knowledge, lifts this far in the Gromoll filtration have rarely been
constructed before. In addition, our construction methods seems to be
novel. In \cite{C-S}, 
\remdc{the first two authors}
constructed the required elements in
\remdc{$\Gamma^{8j+2}_{(7)}$}
as products  between elements in $\pi_\beta(\Diff(D^k,\del))$
and $\pi_\alpha(S^\beta)$, a strategy which had been employed previously
by Antonelli, Burghelea, and Kahn \cite{A-B-K} and Burghelea and Lashoff \cite{B-L}.

In the present paper, we use a secondary product construction,
more precisely, Toda brackets.  In this way we implement the suggestion made in
\cite[Remark 2.15]{C-S}. As a further application of the method, 
%
\remdc{we prove Theorem \ref{thm:bP_spheres} below.}
\remdc{
Let $\Gamma^{4i-1}_{bP} : = \Sigma^{-1}(bP_{4i})$ be the subgroup of $\Gamma^{4i-1}$
corresponding to those homotopy spheres which bound parallelizable manifolds.
Since $bP_{4i}$ is finite cyclic \cite{KM}, $\Gamma_{bP}^{4i-1} \cong bP_{4i}$ has a
unique subgroup of order $4$, which we denote by $_{4}\Gamma_{bP}^{4i-1}$.}

\begin{theorem}\label{thm:bP_spheres}
\remdc{
For all $j \ge 1$, every element of $_{4}\Gamma_{bP}^{8j+3}$
lies in $\Gamma^{8j+3}_{(6)}$. }

\end{theorem}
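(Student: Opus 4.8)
The plan is to combine Theorem \ref{thm:1} with structural properties of the groups $bP_{8j+4}$ and the Gromoll filtration. First I recall that $\Gamma^{8j+3}_{bP} \cong bP_{8j+4}$ is a finite cyclic group, and Adams' theorem (together with the factorization \eqref{eq:alpha_factorized}) identifies the $\alpha$-invariant on it: the element of order two in $bP_{8j+4}$ is precisely the one with non-trivial $\alpha$-invariant, since $bP_{8j+4}$ contains elements not detected by $\alpha$ only through its quotient structure, and the subgroup $_{2}\Gamma^{8j+3}_{bP}$ of order $2$ maps isomorphically to $KO_{8j+3}=\integers/2$. Now Theorem \ref{thm:1} (with $\epsilon=3$ replaced by the relevant degree; more precisely, applying the theorem in the dimension where $8j+3 \equiv 8j'+\epsilon$ — here one uses that the statement is really about all degrees $\equiv 1,2 \bmod 8$, and $8j+3$ is handled by the shifted indexing, or one re-runs the Toda bracket construction directly) provides a homotopy $(8j+3)$-sphere with disk of origin $\le 6$ and non-trivial $\alpha$-invariant. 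Wait — $8j+3 \not\equiv 1,2 \bmod 8$, so this needs care; instead I expect the correct route is: the order-two element of $bP_{8j+4}$ is the image, under a suitable suspension/product operation, of the order-two class constructed in Theorem \ref{thm:1}, and one tracks that this operation does not increase the disk of origin.

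The key steps, in order, would be: (1) identify $_{4}\Gamma^{8j+3}_{bP}$ explicitly as the cyclic subgroup of order dividing $4$ inside $bP_{8j+4}$, and note it is generated by a single element $g$ of order $4$ (or $2$, or $1$, depending on $j$); (2) show that the order-two element $2g \in {}_{2}\Gamma^{8j+3}_{bP}$ lies in $\Gamma^{8j+3}_{(6)}$ — this should follow from Theorem \ref{thm:1} (in the appropriate dimension), since $2g$ is characterized by having non-trivial $\alpha$-invariant and the theorem produces exactly such a class with disk of origin $\le 6$; (3) show that the generator $g$ itself lifts to $\Gamma^{8j+3}_{(6)}$ — this is the substantive step, since $g$ has trivial $\alpha$-invariant and so Theorem \ref{thm:1} alone does not reach it; here I would use the Toda bracket / secondary product construction directly, analogously to the proof of Theorem \ref{thm:1}, to build a class in $\pi_{8j-4}(\Diff(D^6,\del))$ whose image in $\Gamma^{8j+3}$ is $g$ (or a generator of $_4\Gamma^{8j+3}_{bP}$); (4) conclude that the whole subgroup $_{4}\Gamma^{8j+3}_{bP}$, being generated by $g$, lies in $\Gamma^{8j+3}_{(6)}$ since $\Gamma^{8j+3}_{(6)}$ is a subgroup.

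The main obstacle I expect is step (3): detecting and controlling the generator $g$ of $_{4}\Gamma^{8j+3}_{bP}$, which is invisible to the $\alpha$-invariant. One needs a secondary invariant — presumably the same Toda-bracket machinery used for Theorem \ref{thm:1}, but now arranged so that the bracket lands on a class bounding a parallelizable manifold and realizing an element of order $4$ in $bP_{8j+4}$. Concretely this means computing a Toda bracket in $PL_6/O_6$ (via Morlet's equivalence $\Diff(D^6,\del)\simeq \Omega^7(PL_6/O_6)$) and identifying its image in $\Theta_{8j+3}$ with an order-$4$ element of $bP_{8j+4}$; the indeterminacy of the bracket and the precise identification of which $bP$-element is hit are the delicate points. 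A secondary obstacle is bookkeeping the degree shift, since $8j+3$ is not of the form $8j+\epsilon$ with $\epsilon\in\{1,2\}$, so Theorem \ref{thm:1} must be invoked in a neighbouring dimension or the construction re-derived; once the construction is set up the verification that disk of origin $\le 6$ is essentially automatic from the shape of Morlet's equivalence.
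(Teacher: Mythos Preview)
Your proposal contains a fundamental error that makes steps (2) and the surrounding discussion unworkable: in dimension $8j+3$ we have $KO_{8j+3}=0$, so the $\alpha$-invariant vanishes identically there. Moreover, every $bP$-sphere has trivial $\alpha$-invariant in \emph{any} dimension (it bounds a parallelizable, hence spin, manifold, so represents $0$ in $\Omega^{\Spin}_*$); see Lemma~\ref{lem:ImJ_and_Brumfiel}. Thus the claim that ``the element of order two in $bP_{8j+4}$ is precisely the one with non-trivial $\alpha$-invariant'' is false, and Theorem~\ref{thm:1} gives you nothing directly in this dimension.

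The paper's proof proceeds quite differently. One first works in dimension $8j{+}1$, constructing (as in the proof of Theorem~\ref{thm:ImJ_spheres}) an order-two element $g_{SPL_6^J}\in\pi_{8j+1}(SPL_6^J)$ with $\alpha(g_{SPL_6^J})=1$, living in the space $SPL_6^J$ (a homotopy fibre designed so that its homotopy maps into the $\im(J)$-part of $\pi_*(SG)$). One then forms a \emph{second} Toda bracket $\an{\eta_{8j+1},2_{8j+1},g_{SPL_6^J}}\subset\pi_{8j+3}(SPL_6^J)$ to climb two dimensions. Detection of the resulting element $e$ is not via $\alpha$ but via Adams' $e$-invariant (Lemma~\ref{lem:ImJ}), which shows that the image of $e$ in $\pi_{8j+3}(\JS_2)\cong\Z/8$ lies in $\{2,6\}$. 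Combining this with Brumfiel's splitting $\pi_{8j+3}(SPL)\cong\Z\oplus\Z/8\oplus\coker(J_{8j+3})$ and the fact that the $\Z/8$-summand injects into $bP_{8j+4}\subset\Theta_{8j+3}$, one identifies the image of $e$ in $\Theta_{8j+3}$ as a generator of ${}_4bP_{8j+4}$. Since $e$ was constructed in $SPL_6^J$ and hence factors through $\pi_{8j+3}(PL_6/O_6)$, the disk of origin is at most $6$. Your step~(3) gestures toward a Toda-bracket construction, but misses both the iterated nature of the bracket (one bracket to reach $8j{+}1$, another to reach $8j{+}3$) and the crucial role of the $e$-invariant as the detecting tool in place of $\alpha$.
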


\noindent
For a summary of earlier results on the Gromoll filtration of $bP_{4k}$-homotopy 
spheres, see the bottom of the table in the Appendix \ref{sec:Gromoll}.


\subsection{Harmonic spinors and diffeomorphism groups}
\label{sec:harmonic}

It is an old question whether a given closed spin manifold $M$ 
admits \emph{harmonic spinors}. Note that this depends on the Riemannian metric
$M$, the more precise question therefore is whether $M$ admits a Riemannian
metric such that its Dirac operator has non-trivial kernel.

This question has a long history. The many positive results all use the
following strategy: if every metric admits a harmonic spinor, we are of course
done. Otherwise, we look at the complement:
\begin{definition}
  Define $\Inv(M)$ to be the space of Riemannian metrics on $M$ with
  invertible Dirac operator. 
\end{definition}
It then suffices to show that this space is not contractible, so
that it can not be equal to the (contractible) space of \emph{all} Riemannian
metrics.

Nigel Hitchin \cite[Theorem 4.5]{H} was the first to use essentially
this method to prove that there are metrics with non-trivial harmonic spinor
whenever $\dim(M)\equiv -1,0,1\pmod 8$. 
Later, Christian B\"ar \cite{Baer} showed that the space of metrics with
non-invertible Dirac operator on any spin manifold of dimension $m\equiv 3\pmod 4$
is non-empty. Waterstraat \cite{Waterstraat} showed that
its components can be distinguished using the spectral flow of the Dirac
operator, which actually is a relative index.

More specifically, we assume that there is a metric $g_0\in\Inv(M)$.
Choose an embedding of $D^n$ into
$M$ and define $j\colon \Diff(D^n,\del)\to \Diff(M)$ via extension of a
diffeomorphism outside this embedded disk by the identity. 
We have the action map 
\begin{equation*}
  \Diff(M)\to \Inv(M); \quad f\mapsto f^*g_0,
\end{equation*}
given by pulling back $g_0$ by the diffeomorphism, which
we may compose with the extension map $j$.

Our goal now is to use this sequence of maps to obtain non-trivial elements in
$\pi_{n-m}(\Inv(M), g_0)$. Indeed, we can use a relative index of the Dirac
operator (the index difference to $g_0$ in the sense of Ebert \cite{Ebert})
\begin{equation*}
  \relind\colon \pi_{n-m}(\Inv(M),g_0)\to KO_{n+1}.
\end{equation*}
 Strictly speaking, in \cite{Ebert} the map is defined on the space of metrics
 of positive scalar curvature. However, the analytic
   condition required to construct it is \emph{not} positive scalar curvature
   but merily the 
   invertibility of the Dirac operator so that \cite{Ebert} literally applies.
   
The composition
\begin{multline}\label{eq:Hitch_seq}
  \pi_{n-m}(\Diff(D^m,\del))\to \pi_{n-m}(\Diff(M))\\
  \to
  \pi_{n-m}(\Inv(M),g_0) \xrightarrow{\relind} KO_{n+1}
\end{multline}
was introduced and studied by Hitchin \cite{H}. He proved that it
is equal to the $\alpha$-invariant homomorphism.

With Theorem \ref{thm:1} above we produce the required input for Hitchin's method to work in
almost all dimensions, therefore answering the question almost completely: 
\begin{theorem}\label{theo:harmonic}
  Let $M$ be a closed spin manifold of dimension $m\ge 6$. Then $M$ admits a
  Riemannian metric with a non-trivial harmonic spinor. \remws{Indeed for each  Riemannian metric $g_0$ in the complementary space $\Inv(M)$, the homotopy groups $\pi_{n-m}(\Inv(M), g_0)$ are non-trivial for $n\ge m$ and $n\equiv 0,1\pmod 8$.}
\end{theorem}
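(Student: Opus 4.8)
The plan is to feed the classes produced by Theorem \ref{thm:1} into Hitchin's composition \eqref{eq:Hitch_seq} and show that the relative index detects them. Fix $m \ge 6$ and a metric $g_0 \in \Inv(M)$ (if no such $g_0$ exists, then every metric has a non-trivial harmonic spinor and we are done immediately). Given $n \ge m$ with $n \equiv 0,1 \pmod 8$, write $n+1 = 8j+\epsilon$ with $\epsilon \in \{1,2\}$ and $j \ge 1$; note $j \ge 1$ is guaranteed because $n \ge m \ge 6$ forces $n+1 \ge 7$. Theorem \ref{thm:1} provides a class $x \in \pi_{8j-7+\epsilon}(\Diff(D^6,\del)) = \pi_{n-6}(\Diff(D^6,\del))$ whose image under the assembling homomorphism lands in $\Gamma^{8j+\epsilon}_{(6)}$ with non-trivial $\alpha$-invariant in $KO_{8j+\epsilon} = KO_{n+1} = \integers/2$.

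The key step is to promote $x$ from $\pi_{n-6}(\Diff(D^6,\del))$ to an element of $\pi_{n-m}(\Diff(D^m,\del))$ via the assembling homomorphism $\lambda$, and then to observe that the resulting element still has disk of origin $6$, hence still lies in $\Gamma^{n+1}_{(6)} \subseteq \Gamma^{n+1}$, and that its $\alpha$-invariant is preserved. Concretely, the $\alpha$-invariant $\alpha_\Gamma \colon \Gamma^{n+1} \to KO_{n+1}$ of the factorization \eqref{eq:alpha_factorized} is insensitive to where in the Gromoll filtration a class sits — it only depends on the resulting homotopy sphere — so the nontriviality of $\alpha$ on $x$ in degree $8j+\epsilon$ transfers directly. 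One then runs $x$ through \eqref{eq:Hitch_seq}: the composite
\[
  \pi_{n-m}(\Diff(D^m,\del)) \to \pi_{n-m}(\Diff(M)) \to \pi_{n-m}(\Inv(M),g_0) \xrightarrow{\relind} KO_{n+1}
\]
equals the $\alpha$-invariant homomorphism by Hitchin's theorem (cited above from \cite{H}), so $\relind$ sends the image of $x$ to the non-trivial element of $KO_{n+1} = \integers/2$. In particular the image of $x$ in $\pi_{n-m}(\Inv(M),g_0)$ is non-trivial, so $\pi_{n-m}(\Inv(M),g_0) \ne 0$; as $\Inv(M)$ is therefore not weakly contractible while the space of all Riemannian metrics is contractible, the two cannot coincide, so there exists a metric whose Dirac operator has non-trivial kernel, i.e.~a metric with a non-trivial harmonic spinor.

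The main obstacle is the compatibility bookkeeping rather than any deep analysis: one must check that the "disk of origin $\le 6$" property of the class produced in Theorem \ref{thm:1} is genuinely preserved under the assembling maps $\pi_{n-6}(\Diff(D^6,\del)) \to \pi_0(\Diff(D^{n},\del))$ and $\pi_{n-m}(\Diff(D^m,\del)) \to \pi_0(\Diff(D^n,\del))$ — that is, that these assembling operations factor through each other compatibly so the lift to $\pi_{n-m}(\Diff(D^m,\del))$ exists and is consistent — and that Hitchin's identification of \eqref{eq:Hitch_seq} with $\alpha$ applies in the generality of $\Inv(M)$ rather than just for positive scalar curvature metrics. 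The latter point is already addressed in the discussion preceding the theorem: Ebert's index difference \cite{Ebert} is constructed using only invertibility of the Dirac operator, and Hitchin's argument that the composite is $\alpha$ is likewise insensitive to the positive scalar curvature hypothesis, so it carries over verbatim to $\Inv(M)$. The only genuinely new ingredient beyond Theorem \ref{thm:1} and classical results is thus the elementary observation that the contractibility of the full space of metrics, combined with $\pi_{n-m}(\Inv(M),g_0) \ne 0$, forces the existence of a harmonic spinor.
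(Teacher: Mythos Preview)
Your proof is correct and follows essentially the same approach as the paper: feed the classes from Theorem~\ref{thm:1} through Hitchin's sequence~\eqref{eq:Hitch_seq}, use that the composite equals the $\alpha$-invariant, and conclude that $\Inv(M)$ is not contractible (hence a proper subset of the space of all metrics). The paper's version is terser---it simply says the Theorem~\ref{thm:1} classes in $\pi_{n-m}(\Diff(D^m,\del))$ map nontrivially through~\eqref{eq:Hitch_seq}---while you spell out the intermediate lift from $\pi_{n-6}(\Diff(D^6,\del))$ to $\pi_{n-m}(\Diff(D^m,\del))$ via iterated $\lambda$; but this is the same argument. One cosmetic point: your justification ``$n+1\ge 7$ forces $j\ge 1$'' is not quite the right inequality---what you actually need is that $n\equiv 0,1\pmod 8$ and $n\ge 6$ force $n\ge 8$, hence $n+1\ge 9$, so $8j+\epsilon\ge 9$ with $\epsilon\in\{1,2\}$ gives $j\ge 1$.
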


\remws{Note that here $\Inv(M)$ is allowed to be empty, in which case the second statement is vacuous.}

\begin{proof}

%
%
  \remws{We start by proving the  second assertion.  The non-trivial classes of
  order $2$ in $\pi_{n-m}(\Diff(D^m,\boundary))$ of Theorem
  \ref{thm:1} which are
  detected by $\alpha$, map to classes in
  $\pi_{n-m}(\Inv(M),g_0)$ through the action homomorphism; the  latter group is placed in the middle of the
  sequence~\eqref{eq:Hitch_seq}, so the classes constructed in this way are non-trivial.  
  
  It follows that $\Inv(M)$ is non-contractible (maybe empty) and therefore
  must be a strict subset of the contractible  space of all Riemannian metrics on $M$, and the first assertion follows.}
\end{proof}

\begin{remark}
  Bernd Ammann informs us that Theorem \ref{theo:harmonic} also follows as a
  special case of work he carried out independently and in parallel together
  with Bunke, Pilca, and Nowaczyk.  
  \remdc{This work has not appeared yet in preprint form.}
\end{remark}

\remdc{When $\Inv(M)\neq\emptyset$}
our proof gives a bit more than stated in Theorem \ref{theo:harmonic}:

\begin{corollary}\label{corol:harmonic_split}
Under the assumptions of Theorem \ref{theo:harmonic}\remts{, and if $\Inv(M)\neq\emptyset$},
  \begin{equation*}
\pi_{n-m}(\Diff(M),\id) \to KO_{n+1}
  \quad\text{ and}\quad \pi_{n-m}(\Inv(M), g_0) \to KO_{n+1} 
\end{equation*}
are split epimorphisms for all $g_0\in \Inv(M)$. This provides infinitely many degrees where the homotopy
groups contain a summand isomorphic to $\integers/2$.
\end{corollary}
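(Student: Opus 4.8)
The plan is to upgrade the non-triviality argument of Theorem \ref{theo:harmonic} to a splitting statement by exhibiting an explicit right inverse, built out of the splitting already produced in Theorem \ref{thm:1}. Concretely, fix $j\ge 1$ and $\epsilon\in\{1,2\}$ so that $n-m = 8j-7+\epsilon$ and $n+1 = 8j+\epsilon$ with $KO_{n+1}=\integers/2$; these are exactly the positive degrees $n-m$ with $n+m\equiv$ (appropriate residue) in which $KO_{n+1}$ is non-zero, and for each we have from Theorem \ref{thm:1} a homomorphism $s_\Gamma\colon KO_{n+1}\to \pi_{n-m}(\Diff(D^m,\del))$ splitting $\alpha\colon \pi_{n-m}(\Diff(D^m,\del))\to KO_{n+1}$ (after reindexing $m=6$; for general $m\ge 6$ one uses consequence (a) mentioned in the introduction, which gives the corresponding split surjection onto $KO_{n+1}$ whenever $n+m\equiv 0,1\pmod 8$). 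First I would compose this splitting with the extension map $j_*\colon \pi_{n-m}(\Diff(D^m,\del))\to \pi_{n-m}(\Diff(M),\id)$ and then with the action map $a_*\colon \pi_{n-m}(\Diff(M),\id)\to\pi_{n-m}(\Inv(M),g_0)$.

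The key step is then purely formal: Hitchin's theorem (recalled after \eqref{eq:Hitch_seq}) identifies the composite
\[
\pi_{n-m}(\Diff(D^m,\del)) \xrightarrow{j_*} \pi_{n-m}(\Diff(M),\id) \xrightarrow{a_*} \pi_{n-m}(\Inv(M),g_0) \xrightarrow{\relind} KO_{n+1}
\]
with the $\alpha$-invariant homomorphism of $\pi_{n-m}(\Diff(D^m,\del))$. Hence $\relind\circ a_* \circ j_* \circ s_\Gamma = \alpha\circ s_\Gamma = \id_{KO_{n+1}}$, so $a_*\circ j_* \circ s_\Gamma$ splits $\relind$ and $j_*\circ s_\Gamma$ (followed by nothing further) together with the factorization shows simultaneously that $\relind\colon \pi_{n-m}(\Inv(M),g_0)\to KO_{n+1}$ is a split epimorphism and — reading the same chain but stopping at $\Diff(M)$ — that $\alpha\colon \pi_{n-m}(\Diff(M),\id)\to KO_{n+1}$ is a split epimorphism, with $j_*\circ s_\Gamma$ the common right inverse up to the later maps in the chain. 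One has to be a little careful that the relevant $\alpha$-invariant on $\pi_{n-m}(\Diff(M))$ is defined so that Hitchin's identification literally applies; this is where the hypothesis $\Inv(M)\neq\emptyset$ (with basepoint $g_0$) enters, since without it neither $a_*$ nor $\relind$ is defined.

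For the last sentence of the statement, I would simply observe that the residue classes $n-m\equiv 8j-7+\epsilon$ ($j\ge1$, $\epsilon\in\{1,2\}$) form two infinite arithmetic progressions of degrees, in each of which the split surjection onto $\integers/2$ forces $\pi_{n-m}(\Diff(M),\id)$ and $\pi_{n-m}(\Inv(M),g_0)$ to contain a $\integers/2$ summand. The main obstacle is not in this corollary at all — it is entirely bootstrapped from Theorem \ref{thm:1} and Hitchin's computation — but rather in making sure the splitting $s_\Gamma$ of Theorem \ref{thm:1} is genuinely a homomorphism (not merely a set-theoretic section), which it is since $KO_{n+1}\cong\integers/2$ and any choice of preimage of the generator automatically extends to a homomorphism; and in checking the bookkeeping that relates the $m=6$ case to general $m\ge6$ via the stabilization/inclusion maps underlying consequence (a).
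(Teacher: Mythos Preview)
Your proof is correct and follows the same approach the paper intends. In the paper the corollary is stated without a separate proof, as an immediate strengthening of the argument for Theorem~\ref{theo:harmonic}: the order-$2$ classes from Theorem~\ref{thm:1} survive through the Hitchin sequence~\eqref{eq:Hitch_seq} with non-trivial $\alpha$-invariant, and since $KO_{n+1}\cong\Z/2$ this automatically yields the splitting at each intermediate stage. Your write-up makes this explicit by naming the section $s_\Gamma$ and invoking the elementary fact that a factored identity splits every intermediate map, and you correctly flag the passage from $m=6$ to general $m\ge 6$ via the stabilization maps $\lambda$ (which preserve the $\alpha$-invariant and hence the order-$2$ property).
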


\begin{remark}
Note that $\Inv(M)$ is non-empty if and only if the necessary condition for
  this is satisfied, namely that $\alpha(M)=0\in KO_m$, compare
  \cite{AmmannDahlHumbert_adv}.
\end{remark}

\begin{remark}
\remws{In the situation of Corollary \ref{corol:harmonic_split}, suppose that the hypothesis $\Inv(M)\neq\emptyset$ is omitted. Then our method still shows the existence of a split surjection $\pi_{n-m}\Diff(M) \to KO_{n+1}$, under the stronger hypothesis $n\geq m+2$, or after replacing $\Diff(M)$ by the ``spin diffeomorphism group'' whose elements are diffeomorphisms together with a lift of the derivative to the spin principal bundle. In this case the map to $KO$-theory is given by the $\alpha$-invariant of the mapping torus.}
\end{remark}
%

\subsection{Positive curvature} \label{subsec:psc} 

An important application of Theorem \ref{thm:1} concerns the topology of spaces
$\Pos(M)$ 
of metrics of suitable positive curvature on a closed spin manifold $M$ of
dimension $m$.
Here, $\Pos(M)$ can stand for any non-empty diffeomorphism invariant space of Riemannian
metrics which is contained in $\Inv(M)$.
By the Schr\"odinger-Lichnerowicz formula this is the case for the space 
$\Riem^{+}_{sc}(M)$
of metrics of \emph{positive scalar curvature} on $M$. 
We list the most studied examples of $\Pos(M)$:
\begin{itemize}
\item the space $\Riem^{+}_{sc}(M)$ of positive scalar curvature metrics,
\item the space $\Riem^{+}_{Ric}$ of positive Ricci curvature metrics,
\item the space $\Riem^{+}_{sec}$ of positive sectional curvature metrics,
\item the space of $k$-positive Ricci curvature metrics for any $1\le k\le \dim(M)$, interpolating
  between the first two cases.
\end{itemize}

We are studying the case where the corresponding space $\Pos(M)$ is non-empty.
The Schr\"odinger-Lichnerowicz formula entails that the first obstruction to
the existence of a positive scalar curvature metric on $M$ is the index of the
Dirac operator defined by its spin structure, i.e., $\alpha_{\Spin}([M])$ of \eqref{eq:alpha_factorized}.
When $M$ is simply connected of dimension $\ge 5$, Stolz \cite{Stolz} proved
that $\Pos(M) \neq 
\emptyset$ if and only if $\alpha(M) = 0$.  In general, the question of whether
$\Pos(X) \neq \emptyset$ is a deep problem which remains open, see 
\cites{Rosenberg, Sch,ICM}. 

We start at the other end and we \emph{assume} that there is $g_0\in\Pos(M)$,
with $\Pos(M)$ as above.
As above, we have the embedding $j\colon \Diff(D^m,\del)\to \Diff(M)$ and the
action map 
\begin{equation*}
  \Diff(M)\to \Pos(M); \quad f\mapsto f^*g_0.
\end{equation*}

Note that the map $\Diff(M)\to \Inv(M)$ of Section \ref{sec:harmonic} factors
through this action map by the assumption $\Pos(M)\subset \Inv(M)$. Corollary
\ref{corol:harmonic_split} therefore gives immediately the following corollary.
\begin{corollary}\label{corol:Diff_Pos}
  Let $M$ be a closed spin manifold of dimension $m\ge 6$ with a Riemannian
  metric $g_0\in \Pos(M)$ for a space of metrics $\Pos(M)$ as above. If $n\equiv 0,1\pmod
  8$ and $n\ge m$, 
  then $KO_{n+1} = \Z/2$ and the composition
  \begin{multline*}
    \pi_{n-m}(\Diff(D^k,\del),\id) \to \pi_{n-m}(\Diff(M),\id)\\
    \to
    \pi_{n-m}(\Pos(M),g_0)\to \pi_{n-m}(\Inv(M),g_0) \to KO_{n+1} 
  \end{multline*}
  is a split epimorphism. In particular, also
$\pi_{n-m}(\Pos(M), g_0) \to KO_{n+1}=\integers/2$ is a split epimorphism and
$\Pos(M, g_0)$
has infinitely many non-trivial homotopy groups.
\end{corollary}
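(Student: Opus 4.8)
The plan is to deduce this formally from Corollary \ref{corol:harmonic_split} by inserting the space $\Pos(M)$ into the chain of maps of Hitchin's sequence \eqref{eq:Hitch_seq}. First I would check the hypotheses: since $g_0 \in \Pos(M)$ and $\Pos(M) \subseteq \Inv(M)$ by assumption, the space $\Inv(M)$ is non-empty, and $M$ is a closed spin manifold of dimension $m \ge 6$, so the hypotheses of Theorem \ref{theo:harmonic} and of Corollary \ref{corol:harmonic_split} are satisfied. Moreover, for $n \equiv 0,1 \pmod 8$ one has $n+1 \equiv 1,2 \pmod 8$, whence $KO_{n+1} = \integers/2$ by the $8$-periodicity of real $K$-theory.

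Next I would record the relevant factorization of the action map. Since $\Pos(M)$ is diffeomorphism-invariant, $f^*g_0 \in \Pos(M)$ for every $f \in \Diff(M)$, so the action map $\Diff(M) \to \Inv(M)$, $f \mapsto f^*g_0$, factors as the composite $\Diff(M) \to \Pos(M) \hookrightarrow \Inv(M)$, the second arrow being the given inclusion and both arrows respecting the basepoint $g_0$. Pre-composing with the extension map $j \colon \Diff(D^m,\del) \to \Diff(M)$, applying $\pi_{n-m}$, and post-composing with $\relind$, the five arrows displayed in the Corollary become a factorization of the three-term composition of \eqref{eq:Hitch_seq}, which by Hitchin's theorem is the $\alpha$-invariant homomorphism $\pi_{n-m}(\Diff(D^m,\del)) \to KO_{n+1}$.

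Then I would invoke Corollary \ref{corol:harmonic_split}: under the present hypotheses this three-term composition is a split epimorphism onto $KO_{n+1}$. The remainder is purely formal: if a composite $g \circ h$ of homomorphisms (resp.\ pointed maps) admits a section $s$, then $g$ admits the section $h \circ s$. Applying this to the factorization of the $\alpha$-invariant homomorphism obtained above shows in one stroke that the full five-term composition of the Corollary is a split epimorphism onto $KO_{n+1}$, and that each of its terminal sub-composites is as well; in particular $\pi_{n-m}(\Pos(M),g_0) \to KO_{n+1} = \integers/2$ is a split epimorphism. Finally, since $KO_{n+1} = \integers/2 \ne 0$, the set/group $\pi_{n-m}(\Pos(M),g_0)$ is non-trivial for every $n \ge m$ with $n \equiv 0,1 \pmod 8$; as $n$ runs over this infinite set the degrees $n-m$ are pairwise distinct, giving the asserted infinitude of non-trivial homotopy groups.

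The only ingredient that is not entirely formal is the factorization of the action map through $\Pos(M)$, and this is immediate from the standing assumptions that $\Pos(M)$ is a diffeomorphism-invariant subspace of $\Inv(M)$ and that $g_0 \in \Pos(M)$. Hence there is essentially no obstacle to overcome, and the statement really is an immediate corollary of Corollary \ref{corol:harmonic_split}, as announced in the text preceding it.
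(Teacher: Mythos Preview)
Your proposal is correct and follows exactly the route the paper indicates: factor the action map $\Diff(M)\to\Inv(M)$ through $\Pos(M)$ using diffeomorphism-invariance and $\Pos(M)\subset\Inv(M)$, then invoke Corollary~\ref{corol:harmonic_split}. The paper merely states that the corollary is immediate from this factorization, and you have spelled out precisely those details.
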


\begin{remark}
Hitchin introduced precisely this method, applied to the space of
  metrics of positive scalar curvature in \cite{H}. However, at the time
  it was only known that
\begin{equation*}
\alpha\colon
\pi_{k}(\Diff(D^m,\del))\to KO_{m+k+1}
\end{equation*}
is surjective for $k=0$ or $k=1$,
and $m+k\equiv 0,1\pmod 8$, $m\ge 8$. Therefore, Hitchin with this method
only could obtain information about $\pi_0(\Riem^{+}_{sc}(M))$ and $\pi_1(\Riem^{+}_{sc}(M))$.
\end{remark}

Botvinnik, Ebert and Randal-Williams in the breakthrough paper \cite{BotvinnikEbertRandal-Williams}  study the space
of metrics of positive scalar curvature $\Riem^{+}_{sc}(M)$.
They show  that $\relind\colon
  \pi_{n-m}(\Riem^{+}_{sc}(M),g_0)\to KO_{n+1}$ is an epimorphism if $n\equiv 0,1\pmod
  8$ and has infinite image if $KO_{n+1}\iso\integers$, i.e.~$n\equiv 3\pmod
  4$.  Their methods are 
  rather different from ours, in particular the family of metrics they obtain
  are very inexplicit and rely on surgery.

  Hitchin's method, on the other hand, gives rather explicit families of
  metrics ---at least if the family of diffeomorphisms used in the
  construction is explicit. We view this as one of the appealing features of
  our construction.  Moreover, our method applies not only to scalar curvature,
  but to all metrics of positive curvature as listed above.

Note that in Hitchin's and therefore our construction of homotopy classes of
metrics of positive scalar curvature, the corresponding families of metrics are obtained by
pulling back $g_0$ with an appropriate family of diffeomorphisms which is
supported on 
a small disk in $M$. This means that we only make a local
change of the given initial metric $g_0$. We note that
by the very way they are constructed these classes become trivial when mapped to the moduli
space of metrics (in contrast to some elements of $\pi_*(\Riem^{+}_{sc}(M))$ obtained in very different ways in \cite{BotvinnikEbertRandal-Williams,HSS}).

\subsection{Toda brackets} \label{subsec:Toda_intro}
We now describe in more detail our method to prove the main Theorem
\ref{thm:1}, lifting certain exotic spheres deep in the Gromoll filtration,
and additional results around this. 

The starting point of the construction is a homotopy equivalence 
\begin{equation*} \label{eq:Morlet}
  M\colon \Diff(D^n,\boundary)\to \Omega^{n+1}(PL_n/O_n)
\end{equation*}
due to Morlet \cite{Morlet}, with a detailed proof by Burghelea and Lashof in \cite[Theorem
4.4]{B-L}. {Recall that $PL_n$ is the simplicial group of piecewise linear homeomorphisms of
$\reals^n$ fixing the origin, with homotopy theoretic subgroup inclusion $O_n
\to PL_n$ for the orthogonal group
$O_n$.} One sets $O:=\lim_{n\to\infty} O_n$, $PL:=\lim_{n\to\infty} PL_n$,
and $PL/O:=\lim_{n\to\infty}(PL_n/O_n)$.
There are of course stabilization maps
$PL_n/O_n\to PL_{n+1}/O_{n+1}\to PL/O$ (we call all these stabilization maps
$S$).  {We will also use the orientation preserving versions, denoted
  $SPL_n$, etc}.

As checked in \cite[Theorem 1.3]{B-L} and \cite[Lemma 2.5]{C-S}, under the
isomorphism induced by $M$, the stabilization $\lambda$ defining
the Gromoll filtration becomes the stabilization $S$, i.e.~we have a
commutative diagram
\begin{equation*}
  \begin{CD}
  \pi_{n-k}(\Diff(D^{k},\partial))
  @>{\lambda}>>\pi_{n-k-1}(\Diff(D^{k+1},\partial)) @>{\lambda}>>  \pi_0(\Diff(D^n,\partial))\\
    @V{\iso}V{M_*}V @V{\iso}V{M_*}V @V{\iso}V{M_*}V\\ 
  \pi_{n+1}(PL_k/O_k) @>{S_*}>> \pi_{n+1}(PL_{k+1}/O_{k+1}) @>{S_*}>>
  \pi_{n+1}(PL_n/O_n), 
  \end{CD}
\end{equation*}
where the group in the bottom right corner is already stable,
i.e.~the stabilization map to $S_* \colon \pi_{n+1}(PL_n/O_n) \to \pi_{n+1}(PL/O)$ 
is an isomorphism \cite[Theorem 4.6]{B-L}.
Indeed, as verified in \cite[\S 2]{C-S},
the fundamental theorem of smoothing theory \cite[Theorem 7.3]{Lance}
gives an isomorphism
\[ \Psi \colon \Theta_{n+1} \xrightarrow{\cong} \pi_{n+1}(PL/O) \]
such that $S_* \circ M_* = \Psi \circ \Sigma \colon \Gamma^{n+1} \xra{~\cong~}\pi_{n+1}(PL/O)$.

It follows that finding elements deep in the Gromoll filtration corresponds
to lifting elements of $\pi_{n+1}(PL/O)$ to $\pi_{n+1}(PL_k/O_k)$.
In the predecessor paper \cite{C-S} this was achieved by using compositions
\[ \pi_{i}(S^j) \times \pi_j(PL_k/O_k)  \to \pi_{i}(PL_k/O_k).  \]

In this paper we show how Toda brackets on elements of $\pi_*(PL_k/O_k)$
can be used to go even deeper in the Gromoll filtration. Recall that the Toda
bracket $\an{f,g,h}$ of three homotopy classes of maps $f\colon X\to Y$,
$g\colon Y\to Z$, $h\colon Z\to W$ is defined (as a secondary composition
product) whenever the compositions of two
consecutive maps are homotopic to constant maps. The Toda bracket is a set of homotopy
classes of maps from $\Sigma X$ to $W$. The indeterminacy depends on the null
homotopies one can choose.

To prove Theorem \ref{thm:1}, we start with the unique element of order two,
%
%
  \begin{equation}\label{eq:a6}
a_{PL_6/O_6} \in \pi_7(PL_6/O_6) \cong \Theta_7
 \cong \Z/28,
 \end{equation}
where the first isomorphism is $\Psi_*^{-1} \circ S_*$ and the second isomorphism
is found in \cite{KM}.
For $\epsilon\in \{1,2\}$, we now form Toda brackets 
\[\an{\mu_{8j-8+\epsilon,7}, 2_7, \aelt_{PL_6/O_6}}\subset \pi_{8j+\epsilon}(PL_6/O_6)\]
with certain elements
\[\mu_{8j-8+\epsilon,7}\in \pi_{8j-1+\epsilon}(S^7) \quad (j\geq 1)\]
of order 2. 
These elements were constructed in \cite[Section 2.3]{C-S} and stabilize to a family of elements in $\pi_{8j-8+\epsilon}^s$ constructed by Adams \cite{Adams}.

The main task is the to compute the $\alpha$-invariant of an element in this Toda bracket. As an ingredient, we use the fact that the $\alpha$-invariant from \eqref{eq:alpha_factorized} is induced by a map of spaces
\[ \alpha_{PL/O} \colon PL/O\to \Omega^\infty \mathbf{KO}\]
so that it is natural with respect to composition products and Toda brackets. We obtain:

\begin{theorem} \label{thm:PL/O} 
\begin{enumerate}
\item[(a)] For all $j \geq 1$ and $\epsilon \in \{1, 2\}$, any element in the Toda bracket 
$$\an{\mu_{8j-8+\epsilon,7}, 2_7, \aelt_{PL_6/O_6}} \subset \pi_{8j+\epsilon}(PL_6/O_6)$$ 
has non-trivial $\alpha$-invariant.
\item[(b)] There is an element of order 2 in this Toda bracket.
\end{enumerate}
\end{theorem}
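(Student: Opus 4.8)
The plan is to prove Theorem~\ref{thm:PL/O} by treating its two parts with somewhat different tools: part~(a) is a naturality computation of the $\alpha$-invariant along the Toda bracket, and part~(b) is an indeterminacy estimate. For part~(a), the key idea is that $\alpha_{PL/O}\colon PL/O \to \Omega^\infty\mathbf{KO}$ realizes the $\alpha$-invariant at the level of spaces, hence it is compatible with Toda brackets: for any element $x \in \an{\mu_{8j-8+\epsilon,7}, 2_7, \aelt_{PL_6/O_6}}$, its image $\alpha(x) \in KO_{8j+\epsilon}$ lies in the corresponding Toda bracket $\an{\mu_{8j-8+\epsilon}, 2, \alpha(\aelt_{PL_6/O_6})}$ computed in the stable homotopy of $\mathbf{KO}$ (after first stabilizing along $S$, which is an isomorphism on the relevant homotopy group). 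Here $\alpha(\aelt_{PL_6/O_6}) \in KO_7 = 0$, so the naively-indexed bracket is $\an{\mu_{8j-8+\epsilon}, 2, 0}$; but this is a red herring---what actually matters is that $\aelt_{PL_6/O_6}$ maps under $\alpha_{PL/O}$ to a generator of $\pi_7(PL/O) \xrightarrow{\alpha} KO_7$... wait, $KO_7 = 0$. The real point: one should instead push the Toda bracket $\an{\mu_{8j-8+\epsilon}, 2, \alpha_{\mathbf S}(\aelt)}$ through, where $\alpha$ on $\pi_7 PL/O \cong \Theta_7 = \Z/28$ sends the order-two element to the generator of the image of $\Theta_7$ in $\Omega_7^{\Spin}$... and $\Omega_7^{\Spin}=0$ too. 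So the mechanism must be: the composite $PL/O \xrightarrow{\alpha} \Omega^\infty\mathbf{KO}$ factors through $\mathbf{KO}$, and in $\mathbf{KO}$-homotopy the Toda bracket $\an{\mu_{8j-8+\epsilon}, 2, \iota}$ (where $\iota$ is the image of $\aelt$ under the unit-map / Atiyah–Bott–Shapiro composite into $\pi_7 \mathbf{KO}^{\wedge}$...) lands on the nonzero element of $KO_{8j+\epsilon} = \Z/2$ because $\mu_{8j-8+\epsilon}$ is the Adams element detected by $d_\R$, and the relation $2 \cdot (\text{generator of }\Z/2 \subset \pi_*^s) $ forces the bracket to be nonzero via a known Toda-bracket identity in $\pi_*\mathbf{KO}$. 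I would make this precise by identifying $\alpha_{PL/O}(\aelt_{PL_6/O_6})$ explicitly in $\pi_7$ of a suitable connective cover of $\mathbf{KO}$ or in $j$-theory, then invoking Adams' computation of $d_\R$ on the $\mu$-family together with the multiplicative structure.

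For part~(b), the plan is: first produce \emph{some} element $x_0$ in the Toda bracket (it is nonempty because $\mu_{8j-8+\epsilon,7}\circ 2_7 \simeq *$ and $2_7 \circ \aelt_{PL_6/O_6} \simeq *$, the latter since $\aelt_{PL_6/O_6}$ has order two), then control the indeterminacy. The indeterminacy of $\an{\mu_{8j-8+\epsilon,7}, 2_7, \aelt_{PL_6/O_6}}$ is the subgroup $\mu_{8j-8+\epsilon,7}\cdot \pi_{8j+\epsilon}(S^7) + \pi_{8j+1+\epsilon}(PL_6/O_6)\cdot\{\text{something}\}$... more precisely $\im(\mu_{8j-8+\epsilon,7})_* + \im((\aelt_{PL_6/O_6})_*)$ acting on appropriate groups. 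Since every element $x$ in the bracket has order dividing~$2$ up to indeterminacy is not automatic; instead I would argue that $2 x_0$ lies in the indeterminacy, i.e.\ that $2 \cdot \an{\mu,2,\aelt} \ni 0$, because $2 \cdot \an{f,2,g} \subseteq \an{f, 2, g}\cdot 2$ and one can use the standard identity $2\an{\mu, 2_7, \aelt} = -\an{\mu\cdot 2, 2, \aelt}\pm\dots = 0$ (juggling the $2$ across the bracket and using that $\mu$ has order $2$ and $\aelt$ has order $2$). Hence some representative is killed by $2$, and by part~(a) that representative is nonzero, so it has order exactly $2$.

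The main obstacle I expect is part~(a): verifying that $\alpha_{PL/O}$ is genuinely an infinite-loop (or at least $H$-)map compatible with Toda brackets, and then pinning down the image of $\aelt_{PL_6/O_6}$ precisely enough in $\mathbf{KO}$-homotopy (or in the image-of-$J$ spectrum) that the Toda-bracket identity $\alpha(x) = \an{\mu_{8j-8+\epsilon}, 2, \alpha(\aelt)} \ni (\text{generator of } KO_{8j+\epsilon})$ can be certified. This hinges on two inputs already available in the literature and cited in the excerpt: Adams' computation that $\mu_{8j-8+\epsilon}$ is detected by $d_\R$ (equivalently by $\alpha$), and the factorization $\alpha_\Gamma = \alpha_{\Spin}\circ(\text{bordism})\circ\Sigma$ which identifies $\alpha(\aelt_{PL_6/O_6})$ with the Dirac index of the relevant exotic $7$-sphere's bounding data. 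Because $KO_7 = 0$, the subtlety is that the bracket $\an{\mu, 2, \alpha(\aelt)}$ is \emph{not} determined by $\alpha(\aelt) = 0$ alone; one must lift $\aelt_{PL_6/O_6}$ to a class in the fiber of $\mathbf{KO}\to\tau_{\geq 8}\mathbf{KO}$, or equivalently work with $ko$ or with the $2$-local image of $J$, where the order-two element of $\Z/28 \cong \pi_7$ detects a nonzero $d_\R$-type secondary invariant that feeds the Toda bracket. I would isolate this as a lemma about $\mathbf{KO}$-theory Toda brackets, prove it by a direct computation with the $v_1$-periodicity / Adams $e$-invariant, and then deduce part~(a) by naturality.
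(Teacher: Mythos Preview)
Your plan for part~(a) has a genuine gap. You correctly diagnose the problem: pushing the Toda bracket along $\alpha_{PL/O}\colon PL/O\to\Omega^\infty\mathbf{KO}$ lands in $\an{\mu,2,\alpha(a)}$ with $\alpha(a)\in KO_7=0$, so the target bracket is just its own indeterminacy and carries no information. Your proposed fixes (connective $ko$, image-of-$J$, a ``fiber of $\mathbf{KO}\to\tau_{\ge 8}\mathbf{KO}$'') do not help, because $\pi_7$ of each of these is still zero or else there is no evident map from $PL/O$ sending $a_{PL/O}$ to something nonzero there; indeed $a_{PL/O}$ corresponds to a $bP_8$-sphere, which is killed by every primary index-type invariant. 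The paper's mechanism is different and is the missing idea: one pushes only as far as $G/O$ via $p\colon PL/O\to G/O$. There $p_*(a_{PL/O})=0\in\pi_7(G/O)$, so the Moore-space extension $p(\bar a)\in\pi_7^M(G/O)$ lifts to a class $\delta(a_{PL/O})\in\pi_8(G/O)$, and the Toda-bracket element maps to $\delta(a_{PL/O})\cdot Sf$. A snake-lemma argument using the surgery facts $\pi_8^M(G/PL)=0$ and $\pi_8(PL/O)\hookrightarrow\pi_8(G/O)$ shows that $\delta(a_{PL/O})$ is nonzero in $\pi_8(G/O)/(2,p_*\pi_8(PL/O))\cong\Z/2$; since $\alpha\colon\pi_8(G/O)\to KO_8=\Z$ is onto and kills $p_*\pi_8(PL/O)$, this forces $\alpha(\delta(a_{PL/O}))$ to be odd. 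Multiplicativity then gives $\alpha(\delta(a_{PL/O})\cdot Sf)=\alpha(\delta(a_{PL/O}))\cdot\alpha(Sf)\ne 0$. The point you were circling is that the secondary information lives in degree~$8$ of $G/O$ (equivalently $\MSpin$), not in any degree-$7$ $KO$-type group.

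For part~(b) your juggling sketch is in the right spirit but is not precise, and making it so is delicate: from $\an{\mu,2,a}\circ 2\subset -\an{2\mu,2,a}=\an{0,2,a}$ you only get $2x\in a_*\pi_{8j+\epsilon}(S^7)$, which is not obviously zero and does not immediately produce an order-$2$ representative. The paper instead uses the Moore-space description: any element of the bracket is $\bar a\circ\overline{Sf}$; since $f$ desuspends to $S^6$ one may choose $\overline{Sf}$ to be a suspension, so $(\overline{Sf})^*$ is a group homomorphism; and $2\bar a = a\circ\eta = 0$ in $\pi_8(PL_6/O_6)/2$ because $a$ is divisible by~$2$ (this is Lemma~\ref{lem:Moore_extension}). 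Hence $\bar a$ has order~$2$, and so does its homomorphic image.
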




\begin{proof}[Proof of Theorem \ref{thm:1}]
The theorem follows from Theorem \ref{thm:PL/O} by translating the elements from 
(b) back through the  Morlet equivalence of \eqref{eq:Morlet}. \remts{Explicitly,
let $b \in \pi_{8j+\epsilon}(PL_6/O_6)$ be an element of order $2$
as in Theorem \ref{thm:PL/O}(a). Via the Morlet isomorphism, 
we obtain $M_*^{-1}(b)\in \pi_{8j-7+\epsilon}(\Diff(D^6,\partial))$ of
order $2$ with non-trivial $\alpha$-invariant and hence
$\alpha \colon \pi_{8j-7+\epsilon}(\Diff(D^6, \del)) 
\to KO_{8j+\epsilon}$ is split surjective.
}
\end{proof}
%
%

\subsection{The space $PL/O$ and $\im(J)$-homotopy spheres}

Recall $\pi_k^s := \colim_{i \to \infty}\pi_{i+k}(S^i)$ 
and the $J$-homomorphism $\Jhom_* \colon \pi_*(O) \to \pi_*^s$.
The Kervaire-Milnor exact sequence \cite{KM},
\begin{equation} \label{eq:KMS}
0 \to bP_{n+2} \to \Theta_{n+1} \xra{~\Phi~} \coker(\Jhom_{n+1}),
\end{equation}
is an exact sequence of finite abelian groups which 
is split short exact at any odd prime $p$ by a result of Brumfiel \cite[Theorem 1.3]{Brumfiel1}.
Actually, there is a canonical splitting of the $p$-localization
\begin{equation} \label{eq:PL/O_split}
(PL/O)_{(p)} \sim N_{(p)}\times C_{(p)},
\end{equation}
with isomorphisms 
$\pi_*(N_{(p)}) \iso (bP_{*+1})_{(p)}$ and $\pi_*(C_{(p)})\iso \coker(\Jhom_{*})_{(p)}$ 
which, via the isomorphism $\Psi \colon \Theta_{*} \iso \pi_*(PL/O)$,
induce Brumfiel's splitting of $(\Theta_{*})_{(p)}$; see \cite[Theorem 6.8 (iii)]{May} and \cite{May2}\footnote{
These results there are stated for the space $(TOP/O)_{(p)}$ but  
$(PL/O)_{(p)} \simeq (TOP/O)_{(p)}$ at odd primes by
Kirby-Siebenmann \cite[V Theorem 5.3]{KS}.}.
Here we have written $A_{(p)}$ for the localization of an abelian group $A$.
\remdc{The following theorem shows that such a splitting cannot exist 
at the prime $p=2$.}



\begin{theorem} \label{thm:PL/O_at_2}
%
\remdc{
There is no space $N_{(2)}$ with map $t \colon N_{(2)}\to (PL/O)_{(2)}$ 
such that $t_* \colon \pi_*(N_{(2)}) \to \pi_*((PL/O)_{(2)})$ is 
injective with image $\Psi^{-1}\bigl( (bP_{*+1})_{(2)} \bigr)$.}
\end{theorem}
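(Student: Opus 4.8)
\textbf{Proof plan for Theorem \ref{thm:PL/O_at_2}.}
The plan is to argue by contradiction: suppose such a space $N_{(2)}$ and map $t$ exist. The key idea is that the hypothetical splitting would force the image $\Psi^{-1}\bigl((bP_{*+1})_{(2)}\bigr) \subset \pi_*((PL/O)_{(2)})$ to be closed under $2$-local Toda brackets coming from the stable homotopy groups of spheres acting on $PL/O$ via its $H$-space (indeed infinite loop space) structure. More precisely, for a space $N_{(2)}$ equipped with $t$ as in the statement, the subgroups $\Psi^{-1}\bigl((bP_{*+1})_{(2)}\bigr)$ would be the image of a map of spaces and hence would be stable under the secondary composition operations that are natural in the space. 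Then I would show that the elements produced by Theorem \ref{thm:PL/O} violate this: the Toda bracket $\an{\mu_{8j-8+\epsilon,7}, 2_7, a_{PL_6/O_6}}$ in $\pi_{8j+\epsilon}(PL_6/O_6)$ stabilizes to an element of $\pi_{8j+\epsilon}(PL/O)$ lying in $\Psi^{-1}\bigl((bP_{8j+1+\epsilon})_{(2)}\bigr)$ modulo lower filtration (because $a_{PL_6/O_6}$ is, by \eqref{eq:a6}, a generator of the $bP$-summand $\Theta_7 \cong \Z/28$), yet by Theorem \ref{thm:PL/O}(a) it has non-trivial $\alpha$-invariant, while $\alpha$ vanishes on $bP_{*+1}$ in these degrees.

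Let me expand the crucial point, since it is the heart of the matter. First I would recall that $bP_{8j+2} = 0$ for all $j \geq 1$: the groups $bP_{4k}$ are cyclic of a specific order, but $bP_{n+2}$ for $n+1 \equiv 1, 2 \pmod 8$ (equivalently $n+2 \equiv 2, 3 \pmod 8$, which is not a multiple of $4$) vanishes since $bP_{n+2}$ is nonzero only when $n+2 \equiv 0 \pmod 4$ or $n+2$ odd. So in the degrees $* = 8j+\epsilon$ with $\epsilon \in \{1,2\}$, the hypothetical subgroup $\Psi^{-1}\bigl((bP_{*+1})_{(2)}\bigr)$ is $0$: indeed $bP_{8j+2}$ and $bP_{8j+3}$ are the relevant groups, and $bP_{8j+2} = 0$ while $bP_{8j+3} = 0$ (it is $bP_{\text{odd}} = 0$). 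Hence the element $a_{PL_6/O_6} \in \pi_7(PL_6/O_6)$ stabilizes into $\Psi^{-1}(bP_8)$, which is the full $bP$-part of $\Theta_7 = bP_8 \cong \Z/28$, and this lies in the image of $t$ after stabilization. Since the Toda bracket $\an{\mu_{8j-8+\epsilon,7}, 2_7, a_{PL_6/O_6}}$ is natural for the map $S \colon PL_6/O_6 \to PL/O$ and for any splitting map $t$, one shows its image in $\pi_{8j+\epsilon}((PL/O)_{(2)})$ lands in the image of $t_*$, i.e.\ in $\Psi^{-1}\bigl((bP_{8j+1+\epsilon})_{(2)}\bigr) = 0$. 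But a zero element has trivial $\alpha$-invariant, contradicting Theorem \ref{thm:PL/O}(a).

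The main obstacle — and the point requiring genuine care — is justifying that the subgroup $\Psi^{-1}\bigl((bP_{*+1})_{(2)}\bigr)$, being the image of the homotopy groups of a \emph{space} $N_{(2)}$ under a \emph{map} $t$, is actually closed under the relevant Toda bracket. The subtlety is that $t$ is only a map of spaces, not of infinite loop spaces, and Toda brackets a priori depend on the structure available; but the Toda bracket $\an{\mu, 2, a}$ in question only uses the ordinary (suspension) Toda bracket in homotopy groups of the target space together with the composition $\pi_*^s \times \pi_*(X) \to \pi_*(X)$, which \emph{is} natural in $X$ for any space. One has to check that the relevant null-homotopies (the vanishing of $\mu \circ 2$ and of $2 \cdot a$, or rather $2_7 \circ a_{PL_6/O_6}$) pull back appropriately along $t$ and that the bracket computed in $N_{(2)}$ maps onto (a subset of) the bracket computed in $(PL/O)_{(2)}$. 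Concretely: since $a_{PL_6/O_6}$ has order $2$ and lifts to $\pi_7(N_{(2)})$ via $t$ (as it lies in the $bP$-image), the composite $2_7 \circ a$ admits a null-homotopy in $N_{(2)}$, so the Toda bracket $\an{\mu, 2, a}$ can be formed there; naturality of Toda brackets under $t$ then gives $t_*\an{\mu, 2, a}_{N_{(2)}} \subseteq \an{\mu, 2, t_* a}_{(PL/O)_{(2)}}$ up to the indeterminacy, which places a representative of the $(PL/O)$-bracket inside $\im(t_*)$. This representative has trivial $\alpha$ (since $\im(t_*)$ is the $bP$-part and $\alpha|_{bP} = 0$ in these degrees), yet it differs from the elements of Theorem \ref{thm:PL/O}(a) only by the indeterminacy of the bracket, which I would bound and show cannot change the $\alpha$-invariant — giving the contradiction. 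Making this indeterminacy analysis rigorous, using that $\mu_{8j-8+\epsilon,7}$ is pulled back from a stable class and that the $\alpha$-invariant factors through the infinite loop map $\alpha_{PL/O}$, is the technical core of the argument.
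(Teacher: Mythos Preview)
Your approach is correct and essentially the same as the paper's: lift $a_{PL/O}$ (a $bP$-sphere, hence in $\im(t_*)$) to $\pi_7(N_{(2)})$, form the Toda bracket there, and use naturality to land in $\im(t_*) = \Psi^{-1}((bP_{*+1})_{(2)})$, where $\alpha$ vanishes, contradicting Theorem~\ref{thm:PL/O}(a).

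Two remarks. First, the paper uses only the simplest instance $\eta_7$ (i.e.\ $j=1$, $\epsilon=1$); there is no need for the general $\mu$-elements. Second, and more importantly, your ``technical core'' --- the indeterminacy analysis --- is unnecessary: Theorem~\ref{thm:PL/O}(a) (via Theorem~\ref{thm:alpha_on_Toda}) asserts that \emph{every} element of $\an{f,2,a_{PL/O}}$ has $\alpha$-invariant $1$, not just some element. Hence the element produced by naturality from $N_{(2)}$, which lies in this bracket, automatically has $\alpha=1$; since it also lies in the $bP$-part where $\alpha=0$, the contradiction is immediate. (Minor slip: $a_{PL_6/O_6}$ is the unique element of order~$2$ in $\Theta_7\cong\Z/28$, not a generator; but since $\Theta_7=bP_8$, it is still a $bP$-sphere, which is all you need.)
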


\begin{proof}
The exotic sphere corresponding to the element $a_{PL_6/O_6}\in \pi_7
(PL_6/O_6)$ from \eqref{eq:a6} is a $bP$-sphere. So it would define an element
\remdc{$t_*^{-1}(a_N) \in \pi_7( N_{(2)})$} if such a space \remdc{and map} existed. 
Then, any element in the
Toda bracket
\remdc{$\an{\eta_7, 2_7, t_*^{-1}(a_N)}\subset \pi_{9}(N_{(2)})$} would 
\remdc{map under $\Psi \circ t_*$,}
  perhaps up to odd multiples, a $bP$-sphere in dimension 9. But {any
  $bP$-sphere}
has trivial $\alpha$-invariant, contradicting Theorem \ref{thm:PL/O}.
\end{proof}

Let $G_n$ be the topological monoid of 
self-homotopy equivalences of the $(n{-}1)$-sphere:
\[ G_n := \{\phi \colon S^{n-1} \xrightarrow{\simeq} S^{n-1} \}; 
\qquad G=\lim_{n\to \infty} G_n,\]
and let $SG_n$ and $SG$ 
be their orientation preserving variants, 
consisting of maps of degree $1$.
To consider the situation when $p=2$ we recall that
\cite[V \S4]{May} defined an equivalence
\begin{equation} \label{eq:SS}
\psi \colon SG \simeq \JS_\infty \times C_\infty,
\end{equation}
where $\JS_\infty: = {\prod_{p}
  \JS_p}$ for certain $p$-local spaces $\JS_p$,
  $C_\infty : = {\prod_p
  C_p}$
and we have $\pi_*(\JS_\infty) \cong \im(\Jhom_*) \oplus {\rm Tors}(KO_*)$ and 
$\pi_*(C) \cong \coker(\Jhom_*)/{\rm Tors}(KO_*)$.  
In Section \ref{sec:ImJ} we show that
the splitting \eqref{eq:SS} gives rise to a splitting of the $\alpha$-invariant
\[ s_* \colon {\rm Tors}(KO_{n+1}) \to \coker(\Jhom_{n+1}).\]
Using the Kervaire-Milnor homomorphism $\Phi \colon \Theta_{n+1} \to \coker(\Jhom_{n+1})$,
we say that a homotopy $(n{+}1)$-sphere
$\Sigma$ is an {\em $\im(J)$-sphere} if
\[ \Phi(\Sigma) \in s_*({\rm Tors}(KO_{n+1})) \]
and we define 
$\Theta^J_{n+1} \subset \Theta_{n+1}$
to be the subgroup of $\im(J)$-homotopy spheres.  
{We compute in Lemma \ref{lem:ImJ_and_Brumfiel}}%
\[ \Theta^J_{n+1} \cong bP_{n+2} \oplus {\rm Tors}(KO_{n+1})\]
with 
$\alpha(bP_{n+1}) = \{0\}$ and $\alpha(\Theta^J_{n}) = {\rm Tors}(KO_{n})$.

{We show that Theorem \ref{thm:1} can be made more explicit, regarding
  $\im(J)$-spheres. This relies on an $\im(J)$-version of Theorem
  \ref{thm:PL/O}, which in turn uses $\im(J)$-versions $PL_n^J$ of $PL_n$ etc.}

\begin{theorem} \label{thm:ImJ_spheres}
For all $j \geq 1$ and $\epsilon \in \{1, 2\}$, 
there is an $\im(J)$-homotopy sphere
{$\Sigma\in \Theta^J_{8j+\epsilon}$} of order two with $\alpha(\Sigma) = 1$ and disk origin at most $6$.
%
\end{theorem}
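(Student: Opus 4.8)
The plan is to mimic the proof strategy of Theorem \ref{thm:1}, replacing the spaces $PL_n/O_n$ with their $\im(J)$-refinements $PL_n^J$ (and the corresponding quotients $PL_n^J/O_n$), and then tracking the $\alpha$-invariant through the same Toda bracket construction. First I would set up the $\im(J)$-versions: recall that $G^J \subset G$ (the preimage under $\psi$ of $\JS_\infty$ in the splitting \eqref{eq:SS}) gives rise to a space $PL^J/O$ whose homotopy groups capture exactly the subgroup $\Theta^J_{*}$ via the smoothing theory isomorphism $\Psi$; one then defines unstable analogues $PL_n^J/O_n$ with stabilization maps, so that $\pi_{n+1}(PL_n^J/O_n)$ maps to $\pi_{n+1}(PL^J/O) \cong \Theta^J_{n+1}$, and the diagram relating $\lambda$ to stabilization $S$ goes through verbatim. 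The point is that the generator $a_{PL_6/O_6}$ of \eqref{eq:a6} already lies in (the image of) $\pi_7(PL_6^J/O_6)$, because its associated exotic sphere is a $bP$-sphere, hence an $\im(J)$-sphere, so the construction can be carried out entirely inside the $\im(J)$-world.

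Next I would reprove Theorem \ref{thm:PL/O} in this refined setting: form the same Toda bracket $\an{\mu_{8j-8+\epsilon,7}, 2_7, a_{PL_6^J/O_6}} \subset \pi_{8j+\epsilon}(PL_6^J/O_6)$, where $a_{PL_6^J/O_6}$ is the lift of $a_{PL_6/O_6}$, observe that the bracket is defined for the same reasons as before (the relevant composites vanish because $2 \cdot a = 0$ and $\mu \circ 2 = 0$, these being facts about $\pi_*^s$ and $\pi_7(PL_6/O_6)$ already established), and that it contains an element of order $2$ by the same argument as Theorem \ref{thm:PL/O}(b) — the indeterminacy and the order-$2$ claim are detected stably and are unchanged by passing to the $\im(J)$-cover. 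Then I would compute the $\alpha$-invariant: since $\alpha_{PL/O}$ factors through a map of spaces $PL/O \to \Omega^\infty \KO$, and this factors through (or restricts compatibly to) $PL^J/O$ by the very construction of the $\im(J)$-splitting and the splitting $s_* \colon \mathrm{Tors}(KO_{n+1}) \to \coker(\Jhom_{n+1})$ discussed in Section \ref{sec:ImJ}, naturality of $\alpha$ under Toda brackets gives $\alpha$ of the bracket $= \an{\alpha(\mu), \alpha(2), \alpha(a)}$ computed in $\KO$-theory; this is exactly the computation carried out in the proof of Theorem \ref{thm:PL/O}(a), yielding the nonzero element $1 \in KO_{8j+\epsilon} = \Z/2$.

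Finally I would translate back: an order-$2$ element $b \in \pi_{8j+\epsilon}(PL_6^J/O_6)$ with $\alpha(b) = 1$ maps under the stabilization and the Morlet-plus-smoothing-theory chain to a homotopy sphere $\Sigma \in \Theta_{8j+\epsilon}$ which, being in the image of $\pi_{*}(PL_6^J/O_6) \to \pi_*(PL^J/O) \cong \Theta^J_{*}$, is an $\im(J)$-sphere; it has order $2$ (orders are preserved under these isomorphisms) and $\alpha(\Sigma) = 1$, and disk of origin at most $6$ by construction. Combining with the computation $\alpha(\Theta^J_n) = \mathrm{Tors}(KO_n)$ and $\alpha(bP_{n+1}) = \{0\}$ from Lemma \ref{lem:ImJ_and_Brumfiel}, the class $\Sigma$ is nonzero precisely in the $\mathrm{Tors}(KO)$ summand, as claimed.

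I expect the main obstacle to be the construction and basic properties of the $\im(J)$-refinements $PL_n^J$, $PL_n^J/O_n$ — in particular, verifying that the smoothing-theory isomorphism $\Psi$ and the Morlet equivalence refine to these covers compatibly with stabilization, and that the $\alpha$-invariant map $PL/O \to \Omega^\infty \KO$ restricts to $PL^J/O$ in a way compatible with the splitting $s_*$; once this bookkeeping is in place, the Toda bracket computation is identical to the one already done for Theorem \ref{thm:PL/O}. A secondary subtlety is confirming that $a_{PL_6/O_6}$ genuinely lifts to $\pi_7(PL_6^J/O_6)$ at the \emph{unstable} level $n=6$ (not merely stably), which should follow from the fact that the $bP$-part of $\Theta_7$ is already realized unstably in disk of origin $6$ together with the definition of the $\im(J)$-cover as pulled back from $PL/O$.
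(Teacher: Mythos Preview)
Your overall strategy—lift $a_{PL_6/O_6}$ to an $\im(J)$-refined space, form the same Toda bracket there, and read off the $\alpha$-invariant—is exactly the paper's. The difference is the choice of refined space: you propose hypothetical quotients $PL_n^J/O_n$ together with a refined Morlet equivalence, whereas the paper works with $SPL_6^J$, defined as the homotopy fibre of $SPL_6 \to SPL \to SG \xrightarrow{\pr_{C_\infty}} C_\infty$. This choice sidesteps both obstacles you flag. No Morlet refinement is needed: one simply uses the composite $SPL_6^J \to SPL_6 \to PL_6/O_6$ to push the Toda-bracket element into $\pi_{8j+\epsilon}(PL_6/O_6)$, which already certifies disk origin $\le 6$. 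The $\im(J)$-property is established separately by Lemma~\ref{lem:SPLJ_and_imJ}, which identifies the image of $\pi_*(SPL^J)$ in $\Theta_*$ with $\Theta^J_*$. The unstable lifting of $a$ is handled by a concrete computation (Lemma~\ref{lem:SPLJ6}: $i^J_{6*}\colon \pi_7(SPL_6^J)\to\pi_7(SPL_6)$ is an isomorphism), combined with Lemmas~\ref{lem:pi7PL} and~\ref{lem:SPL6} to locate $a_{SPL_6^J}\in 2\pi_7(SPL_6^J)$ of order two mapping to $a_{PL_6/O_6}$. The Toda-bracket step is then a one-line application of the packaged meta-theorem (Theorem~\ref{thm:meta-theorem}).

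By contrast, your route requires constructing $PL_n^J/O_n$ from a splitting of $SG$ that does not obviously descend to $G/O$ or to the unstable quotients $PL_n/O_n$, and then checking compatibility with Morlet and with smoothing theory. This is genuine extra work—indeed the paper's Theorem~\ref{thm:PL/O_at_2} shows there is \emph{no} $2$-local subspace of $PL/O$ realising $bP_{*+1}$ on homotopy, so one must be careful about what ``$PL^J/O$'' could even mean. The paper's use of $SPL_6^J$ (a space over $PL_6/O_6$, not a subspace of it) avoids this entirely.
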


%

\subsection{Some new elements of $\pi_*(PL_m)$}
Above, our interest in the space $PL_6/O_6$ arose due to the Morlet equivalence
$\Diff(D^6, \partial) \simeq \Omega^7(PL_6/O_6)$.  But 
the space $PL_6/O_6$, and more generally the spaces 
$PL_m/O_m$, $PL_m$ and $TOP_m$, $TOP_m/O_m$,
have an important role in smoothing theory and a long history of study in their own right.
Here $TOP_m$ is the space base of base-point preserving homeomorphisms
of $\R^m$ and $TOP_m/O_m$ is its quotient by the orthogonal group, with
$TOP=\lim_m TOP_m$ and $TOP/O=\lim_m TOP_m/O_m$.

We  mention just one recent 
major breakthrough concerning the homotopy theory of the spaces above, based
on the fundamental work of Galatius and Randal-Williams \cite{G-RW}:
 Weiss \cite[Appendix B]{W4}  proves 
that the Pontrjagin class defines a non-trivial homomorphism
\[ \pi_{4k-1}(TOP_m) \to \Q \]
in a range of dimensions where the corresponding homomorphism on $\pi_{4k-1}(O_m)$ vanishes.
Hence Weiss shows the existence of classes in $\pi_{4k-1}(TOP_m) \otimes \Q$
which map non-trivially to $\pi_{4k-1}(TOP_m/O_m) \otimes \Q$.

Below we describe how computations with Toda brackets give new 
information about the $2$-primary homotopy structure of the spaces listed above,
and specifically about $2$-torsion in $\pi_*(PL_m)$ and $\pi_*(TOP_m)$.

Consider the following homotopy commutative diagram which gives
a space level description of the $\alpha$-invariant (see Section \ref{subsec:alpha}):
\[ \xymatrix{  
PL_6 \ar[d] \ar[r] &
PL_6/O_6 \ar[d] \ar[r] &
PL/O \ar[d] \ar@/^2.5ex/[drrr]^{\alpha_{PL/O}}
\\
TOP_6 \ar[r] &
TOP_6/O_6 \ar[r] &
TOP/O \ar[r] &
G/O \ar[r] &
\Omega^\infty \MSpin \ar[r] &
KO
}\]
%
For any space $X$ in the above 
\remdc{diagram}
we write $\alpha \colon \pi_*(X) \to KO_*$ for the map
induced on homotopy groups by the corresponding map $X \to KO$.

The methods described in Section \ref{subsec:Toda_intro} required as input
the order two homotopy class $\aelt_{PL_6/O_6} \in 2 \pi_7(PL_6/O_6)$,
the subgroup of elements divisible by $2$.
In Section \ref{subsec:alpha_on_PL6} we show that $\aelt_{PL_6/O_6}$ lifts 
to an element of order two $\aelt_{PL_6} \in 2 \pi_7(PL_6)$ and this allows us to prove

\begin{theorem} \label{thm:PL6}
For all $j \geq 1$, $\epsilon \in \{1, 2\}$ and $m \geq 6$, 
the $\alpha$-invariant 
$$\alpha_{} \colon \pi_{8j+\epsilon}(PL_m) \to KO_{8j+\epsilon}$$
is a split surjection.  
Hence the same holds for $\alpha \colon \pi_{8j+\epsilon}(TOP_m) \to KO_{8j+\epsilon}$,
$\alpha \colon \pi_{8j+\epsilon}(PL_m/O_m) \to KO_{8j+\epsilon}$
and 
$\alpha \colon \pi_{8j+\epsilon}(TOP_m/O_m) \to KO_{8j+\epsilon}$.
\end{theorem}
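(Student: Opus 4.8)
\textbf{Proof proposal for Theorem \ref{thm:PL6}.}

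The plan is to first establish the statement for $PL_6$ and then propagate it to the other four cases by naturality of $\alpha$ along the maps in the big homotopy-commutative diagram. The starting point is the promised lift $\aelt_{PL_6}\in 2\pi_7(PL_6)$ of $\aelt_{PL_6/O_6}$ (constructed in Section \ref{subsec:alpha_on_PL6}): since the map $\pi_7(PL_6)\to\pi_7(PL_6/O_6)$ sends $\aelt_{PL_6}$ to $\aelt_{PL_6/O_6}$, and since $\aelt_{PL_6}$ is divisible by $2$, we may form the Toda bracket $\an{\mu_{8j-8+\epsilon,7},2_7,\aelt_{PL_6}}\subset\pi_{8j+\epsilon}(PL_6)$, with $\mu_{8j-8+\epsilon,7}\in\pi_{8j-1+\epsilon}(S^7)$ the order-two element from \cite[Section 2.3]{C-S}. (The bracket is defined because $2\cdot\aelt_{PL_6}$ is nullhomotopic, $\aelt_{PL_6}$ being of order $2$ if we choose the lift to have order $2$, and because $\mu_{8j-8+\epsilon,7}\circ 2_7$ is nullhomotopic, $\mu$ being of order two and precomposition with a degree-$2$ map on $S^7$ acting by multiplication by $2$ on $\pi_*(S^7)$ in this range.) Naturality of Toda brackets under the map $PL_6\to PL_6/O_6$ then shows that this bracket maps into $\an{\mu_{8j-8+\epsilon,7},2_7,\aelt_{PL_6/O_6}}$, so by Theorem \ref{thm:PL/O}(a) composed with $\alpha_{PL/O}$ (which factors the $\alpha$-invariant through $PL_6\to PL_6/O_6\to PL/O$), every element of $\an{\mu_{8j-8+\epsilon,7},2_7,\aelt_{PL_6}}$ has non-trivial $\alpha$-invariant. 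This exhibits a class in $\pi_{8j+\epsilon}(PL_6)$ on which $\alpha$ is non-zero; since $KO_{8j+\epsilon}=\Z/2$, the subgroup generated by such a class, or rather the homomorphism $\alpha\colon\pi_{8j+\epsilon}(PL_6)\to\Z/2$, is surjective, and a surjection onto $\Z/2$ from a finitely generated (indeed finite, in the relevant $2$-torsion part) abelian group automatically splits. Here one may simply send the generator of $\Z/2$ to the chosen Toda-bracket element.

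Next I would pass from $m=6$ to arbitrary $m\geq 6$ using the stabilization maps $PL_6\to PL_m\to PL$. The key point is that $\alpha$ on $\pi_{8j+\epsilon}$ is compatible with stabilization: the square formed by $PL_6\to PL_m$ and the two $\alpha$-maps to $KO$ commutes up to homotopy, because both $\alpha$-maps are restrictions of the single map $\alpha_{PL/O}\colon PL/O\to\Omega^\infty\KO$ precomposed with the appropriate structure maps (this is exactly the content of the big diagram in Section \ref{subsec:alpha}, once one notes $PL_m\to PL_m/O_m\to PL/O$). Hence the image of our Toda-bracket class under $\pi_{8j+\epsilon}(PL_6)\to\pi_{8j+\epsilon}(PL_m)$ again has non-trivial $\alpha$-invariant, and the same splitting argument applies: $\alpha\colon\pi_{8j+\epsilon}(PL_m)\to KO_{8j+\epsilon}=\Z/2$ is a split surjection.

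Finally, for the "Hence" clause, I would use naturality once more along the horizontal and vertical maps in the diagram. For $PL_m/O_m$: the projection $PL_m\to PL_m/O_m$ commutes with the maps to $KO$, so the image of our class still has non-trivial $\alpha$-invariant and $\alpha\colon\pi_{8j+\epsilon}(PL_m/O_m)\to\Z/2$ is split surjective. For $TOP_m$ and $TOP_m/O_m$: the maps $PL_m\to TOP_m$ and $PL_m/O_m\to TOP_m/O_m$ likewise commute with the $\alpha$-maps (the diagram is homotopy commutative by construction), so pushing our class forward gives elements of $\pi_{8j+\epsilon}(TOP_m)$ and $\pi_{8j+\epsilon}(TOP_m/O_m)$ with non-trivial $\alpha$-invariant, and the splitting follows as before.

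I expect the main obstacle to be bookkeeping around the naturality of Toda brackets and the precise factorization of $\alpha$ through the big diagram: one must check that the null-homotopies used to define the bracket in $\pi_*(PL_6)$ are compatible with those in $\pi_*(PL_6/O_6)$, so that the image of the bracket really lands inside the target bracket rather than merely in some related coset, and one must confirm that $\alpha_{PL/O}$ is genuinely a map of spaces inducing all the $\alpha$-maps in the diagram, so that it commutes with composition products and Toda brackets on the nose. Both of these are asserted earlier in the excerpt (the space-level description $\alpha_{PL/O}\colon PL/O\to\Omega^\infty\KO$ and the homotopy-commutativity of the diagram), so modulo invoking those facts the argument is a routine diagram chase; the only genuinely new input for this particular theorem is the lift $\aelt_{PL_6}\in 2\pi_7(PL_6)$, whose construction is deferred to Section \ref{subsec:alpha_on_PL6} and which I would treat here as a black box.
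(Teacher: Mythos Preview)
Your overall strategy---form the Toda bracket $\an{\mu_{8j-8+\epsilon,7},2_7,\aelt_{PL_6}}$ in $\pi_{8j+\epsilon}(PL_6)$, push it into $\pi_{8j+\epsilon}(PL_6/O_6)$ by naturality, invoke Theorem~\ref{thm:PL/O}(a) to see $\alpha=1$, and then propagate along the diagram to $PL_m$, $TOP_m$, $PL_m/O_m$, $TOP_m/O_m$---is exactly what the paper does (packaged there as Theorem~\ref{thm:meta-theorem}). So the architecture is right.

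There is, however, a genuine gap in your splitting argument. The assertion that ``a surjection onto $\Z/2$ from a finitely generated abelian group automatically splits'' is false: the reduction $\Z/4\to\Z/2$ is a counterexample. Consequently ``send the generator of $\Z/2$ to the chosen Toda-bracket element'' only defines a group-theoretic section if that element has order~$2$, and you have not established this. Knowing that the \emph{input} $\aelt_{PL_6}$ has order~$2$ does not by itself force any element of the output bracket to have order~$2$.

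The missing step is precisely the content of Theorem~\ref{thm:PL/O}(b), transported to $PL_6$. The paper isolates this as Theorem~\ref{thm:meta-theorem}: since $\aelt_{PL_6}\in 2\pi_7(PL_6)$, one has $\aelt_{PL_6}\circ\eta=0$, so by Lemma~\ref{lem:Moore_extension} every lift $\bar\aelt_{PL_6}\in\pi_7^M(PL_6)$ has order~$2$; and because $\mu_{8j-8+\epsilon,7}=Sf'$ with $2f'=0$, one may choose $\ol{S\mu}$ to be a suspension and hence a co-$H$-map, so that $(\ol{S\mu})^*$ is a homomorphism. Then $\bar\aelt_{PL_6}\circ\ol{S\mu}$ is an element of the Toda bracket of order dividing~$2$, and since its $\alpha$-invariant is~$1$ it has order exactly~$2$. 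This is the element that splits $\alpha$. Once you insert this argument (or simply cite the paper's meta-theorem), the rest of your proof goes through.
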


The rest of this paper is organised as follows:
In Section \ref{sec:Hitchin}  we
establish basic facts about Toda brackets in the space $SG_n$,
mod~$2$ homotopy groups and the space level $\alpha$-invariant. 
Section 3 is about the $\alpha$-invariant on
$PL_6/O_6$ and $PL_6$ and Theorems 
\ref{thm:PL/O} and \ref{thm:PL6} are proven there.
Section 4 covers $\im(J)$-homotopy spheres and contains the proofs of Theorems
\ref{thm:ImJ_spheres} and \ref{thm:bP_spheres}.

\medskip
{\bf Acknowledgements:} We would like to thank Peter May for helpful 
comments concerning the splitting of $(PL/O)_{(p)}$ in \eqref{eq:PL/O_split}.
\remts{We also thank the referree for many suggestions which have improved 
the presentation.}

%

\section{Toda brackets, $\pi_*^M(X)$ and the $\alpha$-invariant}
\label{sec:Hitchin}
\newcommand{\Junst}{I}
\newcommand{\Jst}{I}
\newcommand{\Jsp}{I_{sp}}

\subsection{Toda brackets in $SG_n$} 
\label{sec:J-homomorphism}

In this subsection we review the canonical homomorphism 
\[ \Junst \colon \pi_k(SG_n) \to \pi_{k+n}(S^n) \]
and Toda brackets.

Let $h \colon X \to SG_n$ be a map.
The adjoint of $h$ is given by
\[ \wh h \colon X\times S^{n-1} \to S^{n-1},
\quad (x, y) \mapsto h(x)(y) . \]
For any map $\psi \colon X \times Y \to Z$, the Hopf construction on $\psi$
\cite[Ch.~XI, \S 4]{Whitehead} is the map
\[ S \psi \colon X \ast Y \to \Sigma Z,
\quad [x, t, y] \mapsto [\psi(x, y), t], \] 
were $\ast$ denotes join and $\Sigma$ denotes suspension.  
For any space $X$ we identify $X \ast S^{n-1} = \Sigma^nX$.
Identifying further $\Sigma^n S^k$ with $S^{n+k}$ we obtain a map
\[ \Jsp \colon {\rm Map}(X, SG_n) \to 
{\rm Map}(\Sigma^n X, S^{n}),
\quad h \mapsto S \wh h.\]
%

%
%
%
Passing to path-components we obtain a map
\[ \Jsp \colon [X, SG_n] \to [\Sigma^n X, S^n], \quad
[h] \mapsto [S \wh h] .\]
If we set $X=S^k$, we obtain the homomorphism
\[ \Junst \colon \pi_k(SG_n) \to \pi_{k+n}(S^n), \]
noting that we can identify unpointed with pointed homotopy classes, since $SG_n$ and $S^1$ are connected $H$-spaces and since $S^n$ is simply-connected for $n>1$.
If $0<k < n-1$, then $\pi_k(SG_n) \cong \pi_k(SG)$ and $\pi_{k+n}(S^n)\cong \pi_k^s$ are stable and $\Junst$ is an isomorphism 
\[ \Jst \colon \pi_k(SG) \xrightarrow{\cong} \pi_k^s. \]

\begin{remark}
{The isomorphism $\Jst$ is often taken as an identification, e.g.~in \cite{C-S}.}
%
\end{remark}

Recall that if 
 \[X\xra{f} Y \xra{g} Z\xra{h} W\]
is a sequence of continuous maps such that both composites $gf$ and $hg$ are nullhomotopic, then the Toda bracket
\[\an{f, g, h} \subset [\Sigma X, W]\]
is defined as the set of all homotopy classes constructed in the following
way: Choose nullhomotopies $H$ and $K$ of $gf$ and $hg$ so to obtain two
nullhomotopies $h_*H$ and $f^*K$ of the triple composite $hgf$, thus a map
{from $\Sigma X$ to $W$}. By construction, if $k\colon W\to W'$ is another map, then 
\[k_*\an{ f, g, h} \subset \an{ f, g, kh}\subset [\Sigma X, W'].\]


\begin{lemma} \label{lem:J_and_Toda}
Let $f \colon S^i \to S^j$, $g \colon S^j \to S^k$ and
$h \colon S^k \to SG_n$ be maps such that $\an{f, g, h}$ 
is defined.  Then
\[ \Junst \bigl( \an{f, g, h} \bigr) \subset 
\an{\Sigma^n f, \Sigma^n g, \Junst h} \subset 
\pi_{i+1+n}(S^n). \]
\end{lemma}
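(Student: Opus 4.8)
The plan is to show that the operation $\Junst$, which on the point-set level is given by $h \mapsto S\wh h$ (adjoint followed by the Hopf construction, with the standard identifications $S^k \ast S^{n-1} = \Sigma^n S^k = S^{k+n}$), is ``natural enough'' to carry a Toda bracket into a Toda bracket. Concretely, given maps $f\colon S^i \to S^j$, $g\colon S^j\to S^k$, $h\colon S^k \to SG_n$ with $gf \simeq \ast$ and $hg \simeq \ast$, I must first check that the right-hand Toda bracket $\an{\Sigma^n f, \Sigma^n g, \Junst h}$ is even defined: the composite $(\Sigma^n g)\circ(\Sigma^n f) = \Sigma^n(gf)$ is nullhomotopic since suspension is functorial and preserves nullhomotopies, and $(\Junst h)\circ(\Sigma^n g)$ is nullhomotopic because it equals $\Junst(h\circ g)$ up to the identifications (here one uses that $\Junst$, viewed as $S\wh{(-)}$, is compatible with precomposition by a suspension — precomposing $h$ with $g$ corresponds to precomposing $S\wh h$ with $\Sigma^n g$), and $hg \simeq \ast$. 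So the statement makes sense.

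\textbf{Key steps.}
First I would record the point-set compatibility of the Hopf construction with joins: for $\phi\colon S^j \to S^k$ a map and $h\colon S^k \to SG_n$, the adjoint satisfies $\wh{(h\phi)} = \wh h \circ (\phi \times \id_{S^{n-1}})$, hence $S\wh{(h\phi)} = S\wh h \circ (\phi \ast \id_{S^{n-1}}) = S\wh h \circ \Sigma^n\phi$. This identity, together with its homotopy-parametrized version, is the engine. Second, I would take an element of $\an{f,g,h}$, i.e. fix nullhomotopies $H\colon S^j\times[0,1] \to S^k$ of $gf$ and $K\colon S^k\times[0,1]\to SG_n$ of $hg$, and form the resulting map $S^{i+1}\to SG_n$ by gluing $h\circ H$ and $K\circ (f\times\id)$ along the cone coordinate. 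Applying $S\wh{(-)}$ to this picture — using the compatibility of the Hopf construction with the cone/suspension coordinate, and the displayed identity above in each half — produces exactly the map $S^{i+1+n}\to S^n$ built from the nullhomotopies $\Sigma^n(gf)\simeq\ast$ (namely $\Sigma^n H$, reinterpreted via $S\wh{h\circ H} = S\wh h\circ \Sigma^n H$) and $(\Junst h)\circ\Sigma^n g\simeq\ast$ (namely $S\wh K$). Therefore the image under $\Junst$ of that element of $\an{f,g,h}$ lies in $\an{\Sigma^n f,\Sigma^n g,\Junst h}$, giving the claimed inclusion. Third, one should be slightly careful about basepoints and the identification of pointed with unpointed homotopy classes, exactly as in the discussion preceding the lemma: since $SG_n$ and $S^1$ are connected $H$-spaces and $S^n$ is simply connected, all the relevant sets of homotopy classes agree, so the Toda brackets computed with or without basepoints coincide, and no basepoint subtlety obstructs the argument.

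\textbf{Main obstacle.}
The routine but genuinely fiddly part is the bookkeeping of the homeomorphisms $S^k \ast S^{n-1} \cong \Sigma^n S^k$ and the cone coordinate: one must verify that the Hopf construction $S\wh{(-)}$ intertwines the ``gluing of two half-homotopies along $[0,1]$'' that defines the Toda bracket on the source with the corresponding gluing on the target, i.e. that smashing with $S^{n-1}$ (which is what $\wh{(-)}$ then $S$ effectively does in the parameter directions) commutes with the pushout/suspension structure used in the Toda construction. This is a standard naturality-of-the-Hopf-construction argument — it appears in essentially this form in Toda's book and in Whitehead, Ch.~XI — but writing it out cleanly with explicit coordinates is where the real work lies; conceptually there is nothing deep, only the need to keep track of which coordinate is the join parameter, which is the suspension parameter, and which is the null-homotopy parameter. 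I expect this to be the only nontrivial point, and I would handle it by phrasing everything in terms of the smash-product model $\Sigma^n X = S^{n-1}_+ \wedge \text{(cone stuff)}$ rather than explicit join coordinates, so that the compatibility becomes a formal consequence of the monoidal naturality of the Hopf construction.
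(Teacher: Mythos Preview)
Your argument is correct, but the paper's proof is considerably slicker and sidesteps exactly the coordinate bookkeeping you flag as the main obstacle. The paper observes once and for all that naturality of $\Jsp$ in $X$ gives the factorisation $\Junst(h) = \varepsilon \circ \Sigma^n h$, where $\varepsilon := \Jsp(\id_{SG_n}) \colon \Sigma^n(SG_n) \to S^n$ is a fixed map; this is equivalent to your identity $S\wh{(h\phi)} = S\wh h \circ \Sigma^n\phi$ specialised to $h=\id_{SG_n}$. With that factorisation in hand the lemma follows in one line from two standard facts about Toda brackets---compatibility with suspension ($\Sigma^n\an{f,g,h}\subset\an{\Sigma^n f,\Sigma^n g,\Sigma^n h}$) and with postcomposition ($\varepsilon_*\an{-,-,\Sigma^n h}\subset\an{-,-,\varepsilon\circ\Sigma^n h}$)---so there is no need to track nullhomotopies or cone coordinates explicitly. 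Your direct approach via gluing nullhomotopies is essentially re-proving those two standard facts in this particular instance; it works, but the factorisation makes the argument entirely formal.
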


\begin{proof}
Since $\Jsp$ is natural in $X$, $\Junst h$ is given by suspending $h$ first $n$ times and then postcomposing with 
\[\varepsilon:= \Jsp (\id_{SG_n})\colon \Sigma^n (SG_n)  \to  S^n.\]
Applying this reasoning to $\an{f,g,h}$ instead of $h$ we have
\begin{multline*}
 \Junst \bigl(\an{f,g,h}\bigr) = \varepsilon_*\bigl(\Sigma^n \an{f,g,h}\bigr)\\
 \subset \varepsilon_*\bigl(\an{\Sigma^n f, \Sigma^n g, \Sigma^n h}\bigr) 
 \subset \an{\Sigma^n f, \Sigma^n g, \varepsilon\circ (\Sigma^n h)}
\end{multline*}
where the last map is $\Junst h$. 
\end{proof}


\subsection{Mod~$2$ homotopy groups} 
\label{subsec:Moore_space}

In this subsection we recall how maps of mod~$2$ Moore spaces
are related to certain Toda brackets. 

Let 
\[ M_k := S^k \cup_2 e^{k+1} \]
be the mod-$2$ Moore space,
and $c \colon M_k \to S^{k+1}$ the map collapsing $S^k$ to a point.
If $X$ is a simply-connected space and $x \colon S^k \to X$ is such that $2x = 0 \in \pi_k(X)$,
then $x$ can be extended to a map $\bar x \colon M_k \to X$.
\remdc{Moreover, if} $y \colon S^{i-1} \to S^{k}$ is a map such that
$2y = 0 \in \pi_{i-1}(S^{k})$
then there is a map $\ol{Sy} \colon S^{i} \to M_k$ such that
$c \circ \ol{Sy} = Sy$.

Thus we can form the composition $\bar x \circ \ol{Sy} \colon S^i \to M_k \to X$,
and it follows from the definitions that
\[ \bar x \circ \ol{Sy} \in \an{y, 2, x} \subset \pi_{i}(X).  \]
%
Here we identify $\pi_k(S^k)=\Z$ via the mapping degree. Moreover, the choices in the construction of $\ol{Sy}$ and $\bar x$
correspond precisely to the indeterminacy in the Toda bracket.

To apply the above, we will be interested in pointed homotopy classes 
of base-point preserving maps $\bar x \colon M_k \to X$.
Let $X$ be a simply connected space and define the $k$-th homotopy
set with $\Z/2$-coefficients, see \cite[\S 3]{N}, by
\[ \pi_k^M(X) := [M_k, X]_*.\]
Note that \cite{N} uses different notation, with
$\pi_k^M(X)=\pi_{k+1}(X;\integers/2)$.
Notice that for $k \geq 2$, $M_k$ is a suspension and so a 
co-H-space and so $\pi_k^M(X)$ has a \remdc{natural} group structure 

The cofibration sequence $S^k\xrightarrow{i} M_k\xrightarrow{c} S^{k+1}$ comes with a long exact Puppe sequence \cite[\S 18]{N}
\begin{equation} \label{eq:les_of_moore_groups}
  \dots \to \pi_{k+1}(X) \xra{\times 2\,} \pi_{k+1}(X)
\xra{~c^*~} \pi_k^M(X) \xra{~i^*~} \pi_k(X) \xra{\times 2\,} \pi_k(X) \to \dots~. 	
\end{equation}
For an abelian group $A$, let $\eot{A} \subset A$ denote the subgroup of elements of order less
than or equal to two and let $A/2$ denote the quotient $A/2A$.
The long exact sequence above gives rise to a short exact sequence,
see \cite[\S 3]{N},
\begin{equation}\label{eq:ses_of_moore_groups}
  0 \to \pi_{k+1}(X)/2 \xrightarrow{c^*} \pi_k^M(X) \xrightarrow{i^*} \eot{\pi_k(X)} \to 0. 
\end{equation}

Denote by $\eta\colon S^{k+1}\to S^k$ the non-trivial homotopy class.
The following lemma is probably well known to experts, 
but as we did not find a reference, we give a proof.

\begin{lemma} \label{lem:Moore_extension}
Let $x \in \eot{\pi_k(X)}$ and let $\bar x \in \pi_k^M(X)$ have
$i^*(\bar x) = x$.  Then $2 \bar x = [x \circ \eta] \in \pi_{k+1}(X)/2$.
\end{lemma}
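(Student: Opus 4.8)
The plan is to compute $2\bar x$ in the group $\pi_k^M(X)$ by understanding how multiplication by $2$ acts on the cofibre sequence $S^k \xrightarrow{i} M_k \xrightarrow{c} S^{k+1}$ of the Moore space. Recall that $M_k$ is a co-H-space for $k \geq 2$, and the degree-$2$ self-map $[2] \colon M_k \to M_k$ (i.e.\ the map inducing multiplication by $2$ on $\pi_k^M(X)$ via precomposition) is well-understood: it is \emph{not} nullhomotopic, and the standard fact (see Neisendorfer, or Toda) is that $[2] \colon M_k \to M_k$ factors as the composite
\[ M_k \xrightarrow{c} S^{k+1} \xrightarrow{\eta} S^k \xrightarrow{i} M_k, \]
where $\eta$ is the Hopf class. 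This is the incarnation of the relation $2 \cdot \mathrm{id}_{M_k} = i \circ \eta \circ c$ coming from the fact that the Moore space in question has a nontrivial attaching behaviour detected by $\mathrm{Sq}^2$ (equivalently, $[2]$ on $M_k$ is essentially $\eta$). I would either cite this or give the short argument: the obstruction to $[2]$ being nullhomotopic lies in $[\Sigma S^k, S^k] = \pi_{k+1}(S^k) = \Z/2\{\eta\}$, and one checks via the Steenrod square $\mathrm{Sq}^2$ on $H^*(M_k/2)$, or via the long exact sequence \eqref{eq:les_of_moore_groups}, that this obstruction is exactly $\eta$.

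Granting this, the computation is immediate: for $\bar x \in \pi_k^M(X) = [M_k, X]_*$ we have
\[ 2\bar x = \bar x \circ [2] = \bar x \circ i \circ \eta \circ c = x \circ \eta \circ c, \]
using $i^*(\bar x) = \bar x \circ i = x$ in the last step. Now $x \circ \eta \colon S^{k+1} \to X$ is a class in $\pi_{k+1}(X)$, and precomposing it with $c \colon M_k \to S^{k+1}$ is precisely the map $c^* \colon \pi_{k+1}(X) \to \pi_k^M(X)$ appearing in \eqref{eq:les_of_moore_groups} and \eqref{eq:ses_of_moore_groups}. Hence $2\bar x = c^*([x \circ \eta])$, which under the identification $\pi_{k+1}(X)/2 \xrightarrow{c^*} \pi_k^M(X)$ of \eqref{eq:ses_of_moore_groups} is exactly the class $[x \circ \eta] \in \pi_{k+1}(X)/2$, as claimed.

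The one subtlety I would be careful about is base-point and co-H issues: the statement implicitly uses the group structure on $\pi_k^M(X)$ (so we need $k \geq 2$, consistent with the running assumptions since here $k = 8j - 8 + \epsilon + 7 \geq 7$), and one must make sure that "multiplication by $2$" on this group is genuinely realised by precomposition with the algebraic degree-$2$ map $[2]$ of the co-H-space $M_k$ — this is standard but worth stating. The main obstacle is therefore not a calculation but pinning down and justifying the identity $[2]_{M_k} = i \circ \eta \circ c$ in $[M_k, M_k]$; once that is in hand, everything else is formal manipulation with \eqref{eq:les_of_moore_groups} and \eqref{eq:ses_of_moore_groups}. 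If a clean reference for that identity is not available in the form needed, I would fall back to the Toda-bracket description given just above the lemma: since $\bar x \circ \overline{Sy} \in \an{y, 2, x}$, one can take $y = \eta$ (using $2\eta = 0$), note $\overline{S\eta} \colon S^{k+2} \to M_{k}$ — wait, degrees — take instead $\overline{S\eta}\colon S^{k+1}\cup\text{cell}$; more robustly, I would use that $2 \cdot \mathrm{id}_{M_k} \in \an{c, 2, i}$ and that this Toda bracket contains $i \circ \eta \circ c$, then precompose with $\bar x$ and apply naturality of Toda brackets (the displayed property $k_*\an{f,g,h} \subset \an{f,g,kh}$) to land in $\an{c, 2, x} \ni x \circ \eta \circ c = c^*(x\circ\eta)$.
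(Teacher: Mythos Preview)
Your main argument is correct and is essentially the same as the paper's proof: both reduce to the universal case and use that $2\cdot\id_{M_k} = i\circ\eta\circ c$ in $[M_k,M_k]_*$, equivalently that $\pi_k^M(M_k)\cong\Z/4$, which the paper cites as \cite[Corollary~7.3]{N}. Your fallback Toda-bracket paragraph is garbled (the bracket $\an{c,2,i}$ does not typecheck as written), but it is unnecessary since the main line already suffices.
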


\begin{proof}
By naturality it is enough to consider the case where $X=M_k$ and $\bar x=1\in \pi_k^M(M_k)$, represented by the identity map. 
Now $\pi_k(M_k)=\Z/2[i]$ where 
$i \colon S^k \to M_k$ is the inclusion, and $\pi_{k+1}(M_k) = \Z/2[i \circ \eta]$. Hence the short exact sequence \eqref{eq:ses_of_moore_groups} becomes
\[  0\to \Z/2 \xra{~c^*~} \pi_k^M(M_k) \xra{~i^*~} \Z/2  \to 0,  \]
and we must show that $2 \in \pi_k^M(M_k)$
is equal to $c^*(i \circ \eta)$, i.e.~that the sequence does not split, or
equivalently that $\pi_k^M(M_k)\cong \integers/4$. This is exactly the
statement of \cite[Corollary 7.3]{N}.
\end{proof}


\subsection{The $\alpha$-invariant} 
\label{subsec:alpha}

Recall that the $\alpha$-invariant is a morphism of (homotopy) ring spectra
\[\alpha\colon \MSpin\to \KO\]
inducing a ring homomorphism
\[ \alpha \colon \Omega_*^{\Spin} \to KO_* \]
on homotopy groups. In this section we present the construction of  a continuous map
\[c\colon G/O \to \Omega^\infty \MSpin\]
and give some properties. 
The interest of this construction is that, when precomposed with the inclusion $PL/O\to G/O$ and postcomposed with the $\alpha$-invariant, we obtain a group homomorphism
\[\alpha\colon \pi_* (PL/O) \to KO_*\]
which we will use to detect non-triviality of certain exotic spheres. Similarly, we may compose with the projection $G\to G/O$ to obtain a different group homomorphism (still denoted by the same letter)
\[\alpha\colon \pi_*^s\cong \pi_*(G) \to KO_*\]
where we use the isomorphism $\Jst\colon \pi_*G\cong \pi_*^s$ from  subsection \ref{sec:J-homomorphism}. 

We start by recalling some facts about orientations.  For a (homotopy) ring spectrum $\bfR$ and an $n$-dimensional  spherical fibration $p$ over $B$, with Thom space $T(p)$, an $\bfR$-orientation of $p$ is a choice of Thom class $\tau\in \bfR^n(T(p),*)$. The group of units $GL_1(\bfR^0(B))$ in the ring $\bfR^0(B)$ acts on the set of orientations by product and it is a consequence of the Thom isomorphism that this action is free and transitive. Thus if $\tau_1$ and $\tau_2$ are two $\bfR$-orientations of $p$, there is a difference class $\tau_1/\tau_2\in GL_1(\bfR^0(B))$, defined uniquely by the property that 
\[\tau_2 \cdot (\tau_1/\tau_2) = \tau_1\in \bfR^n(T(p),*).\]

As usual, let $GL_1(\bfR)\subset \Omega^\infty \bfR$ consist of those components which project to elements in $GL_1(\pi_0 \bfR)$. With this notation the difference class $\tau_2/\tau_1$ (just as any element in $GL_1(\bfR^0(B))$) is given by an unpointed homotopy class of maps $B\to GL_1(\bfR)$. (The space $GL_1(\bfS)$ is classically denoted by $F$ and the component of $GL_1(\mathbf{KO})$ containing the unit is classically denoted by $BO^\otimes$.)

In our setting $\bfR=\MSpin$. The universal bundle over $G_n/\Spin_n$ (classified by the projection to $B\Spin_n$) has two canonical $\MSpin$-orientations, $\tau_1$ (given by the spin structure) and $\tau_2$ (by the fiber homotopy trivialization). The difference classes $\tau_1/\tau_2$ for varying $n$ stabilize to yield a map
\[c\colon G/\Spin\to GL_1(\MSpin)\subset \Omega^\infty\MSpin.\]

An explicit description of this map is as follows: As the universal bundle $p_n$ over $G_n/\Spin_n$ pulls back from the universal bundle over $B\Spin_n$, there is an induced map on Thom spaces $T(p_n)\to\mathbf{MSpin}_n$. The fiber homotopy trivialization of $p_n$ induces a homotopy equivalence $T(p_n)\simeq \Sigma^n_+ (G_n/\Spin_n)$, so we get a map $\Sigma^n_+(G_n/\Spin_n)\to \mathbf{MSpin}_n$
adjoint to a map
\[c\colon G_n/\Spin_n\to \Omega^n \MSpin_n \subset \Omega^\infty \MSpin\]
of spaces. By construction, the base-point of $G_n/\Spin_n$ maps to the unit $1\in \pi_0 \MSpin$; it follows that $c$ takes values in $GL_1(\mathbf{MSpin})$. 


\begin{remark}
If $G\Spin\to G$ denotes the 1-connected covering, then the composite
\[G\Spin/\Spin \to G/\Spin \to G/O\]
is a homotopy equivalence, hence a map on $G/\Spin$ gives rise to a map on $G/O$.
\end{remark}


The homotopy groups in positive degrees of $GL_1(\KO)$ are canonically identified with those of $KO$ by means of the canonical homotopy equivalence (of spaces)
\[GL_1(\KO)\simeq \Omega^\infty_0 \KO \times \{\pm 1\}.\]


\remws{In the following, we denote the sphere spectrum by $\mathbf S$.}

\begin{lemma} \label{lem:alpha_commutes}
The map $\alpha\colon \pi_*(PL/O)\to KO_*$ is compatible with precomposition: If $x\in \pi_n(PL/O)$ and $f\colon S^{n+k}\to S^n$ with $k>0$, then  
\[ \alpha(x \circ f) = \alpha(x) \cdot Sf = \alpha(x) \cdot \alpha(Sf) \in KO_{n+k}\]
where $Sf\in \pi_k^s$ is the stabilization of $f$. In the middle term, multiplication by $f$ is through the unit map $\mathbf{S}\to \mathbf{KO}$ of the ring spectrum $\mathbf{KO}$ (given by $1\in\pi_0\mathbf{KO}$); in the last term we view $Sf$ as an element in $\pi_k(G)$ (through the isomorphism $I$ from above) which maps to an element in $\pi_k(G/\Spin)$, to which the $\alpha$-invariant may be applied.
\end{lemma}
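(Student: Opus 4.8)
The plan is to reduce the statement to a purely formal property of the multiplicative structure underlying the $\alpha$-invariant map $c\colon G/\mathrm{Spin}\to GL_1(\MSpin)$, namely that it is an infinite loop map (equivalently, a map of grouplike $E_\infty$-spaces), and then to track through how precomposition with a stable map $f$ acts. First I would recall that $x\circ f$, for $x\colon S^n\to PL/O$ and $f\colon S^{n+k}\to S^n$, is exactly the image of $x$ under the action of $\pi_k^s$ on $\pi_n$ of any infinite loop space: if $x$ is represented after adjunction by a stable map $\mathbf S\to \Sigma^{-n}\Sigma^\infty_+(PL/O)$ and $f$ by a stable self-map of spheres of degree $k$, then $x\circ f$ corresponds to the composite, which is precisely the module multiplication of $Sf\in\pi_k^s=\pi_k\mathbf S$ on the homotopy group $\pi_n$. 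So the content is: the homomorphism $\pi_*(PL/O)\to KO_*$, obtained by composing with $PL/O\to G/O\simeq G\mathrm{Spin}/\mathrm{Spin}\xra{c}GL_1(\MSpin)\xra{\alpha}GL_1(\KO)\to \Omega^\infty\KO$, is a map of $\pi_*^s$-modules, where the module structure on the target is via the unit $\mathbf S\to\KO$.

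The key steps, in order, would be: (1) observe that $PL/O\to G/O$ and $G/O\xra{\simeq}G\mathrm{Spin}/\mathrm{Spin}$ are infinite loop maps (the former from the standard infinite loop space structures in surgery theory, the latter from the remark above the lemma, which identifies it as an equivalence of such spaces); (2) observe that $c\colon G\mathrm{Spin}/\mathrm{Spin}\to GL_1(\MSpin)$ is an infinite loop map — this is the crucial input, and it holds because the difference-of-Thom-classes construction is multiplicative: $\tau_1/\tau_2$ for a Whitney sum is the product of the difference classes, so $c$ respects the $E_\infty$-structure coming from Whitney sum on the source and $\otimes$ on $GL_1$; (3) the map $\alpha\colon GL_1(\MSpin)\to GL_1(\KO)$ is an infinite loop map since $\alpha\colon\MSpin\to\KO$ is a map of ($E_\infty$, or at least homotopy) ring spectra, hence induces a map on units, and (4) $GL_1(\KO)\to\Omega^\infty\KO$ (onto the appropriate components, in positive degrees via $GL_1(\KO)\simeq\Omega^\infty_0\KO\times\{\pm1\}$) is compatible with the relevant structure. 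Composing, $\alpha\colon\pi_*(PL/O)\to KO_*$ is a map of modules over $\pi_*^s$, and since the module structure on $KO_*$ is by definition that induced by the ring spectrum unit $\mathbf S\to\KO$, we get $\alpha(x\circ f)=\alpha(x)\cdot Sf$ with $Sf$ acting through $1\in\pi_0\KO$. Finally, to identify this with $\alpha(x)\cdot\alpha(Sf)$, I would note that under $I\colon\pi_k(G)\iso\pi_k^s$ the composite $\pi_k(G)\to\pi_k(G/\mathrm{Spin})\xra{\alpha}KO_k$ is, by the very construction of $c$ applied to $x=\mathrm{unit}$, the image of $Sf$ under $\pi_k^s\xra{\text{unit}}KO_k$; that is the content of the last clause of the lemma and is essentially the definition of the $\alpha$-invariant on the $J$-image together with the compatibility $S_*\circ M_* = \Psi\circ\Sigma$ type identifications — here it is the statement that the two ways of pushing $\pi_*^s$ into $KO_*$ (via the ring unit, versus via $G\to G/\mathrm{Spin}\xra{c}\cdots$) agree, which follows because $c$ is a map of ring-spectrum units and hence a $\pi_*^s$-module map sending $1\mapsto 1$.

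The main obstacle I expect is step (2): justifying cleanly that $c$ is a map of grouplike $E_\infty$-spaces (or infinite loop spaces), rather than merely a map respecting the naive $H$-space structures. One must be careful about which $E_\infty$-structure on $G/\mathrm{Spin}$ one uses — the one arising from Whitney sum of stable spherical fibrations with spin structure — and check that the stabilized difference classes $\tau_1/\tau_2$ assemble compatibly, i.e.\ that the maps $c\colon G_n/\mathrm{Spin}_n\to\Omega^n\MSpin_n$ are compatible with the bordism-theoretic multiplication up to coherent homotopy. In practice one can avoid the full coherence discussion: since the lemma only asserts a formula on homotopy groups for precomposition by a \emph{single} stable map $f$, it suffices that $c$ (hence $\alpha$) respects the \emph{module} action of $\pi_*\mathbf S$ on homotopy groups, which follows already from $c$ being an $H$-map between infinite loop spaces together with naturality of the Hopf/Thom construction under suspension — essentially the same bookkeeping as in Lemma \ref{lem:J_and_Toda}. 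So the honest version of the proof would isolate exactly this weaker statement (module-map on $\pi_*$, not full $E_\infty$-map) and verify it by unwinding the adjunctions defining $c$ and the action of $\pi_*^s$, which is the routine part I would not grind through here.
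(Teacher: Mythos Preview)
Your proposal is correct, but it works much harder than necessary, and the paper's proof makes this clear. For the first equality $\alpha(x\circ f)=\alpha(x)\cdot Sf$, you set out to show that the whole chain $PL/O\to G/O\to G/\Spin\xra{c} GL_1(\MSpin)\to GL_1(\KO)\to \Omega^\infty\KO$ is a map of $\pi_*^s$-modules, and you worry in particular about whether $c$ is an infinite loop map. None of this is needed. Since $\alpha_{PL/O}\colon PL/O\to \Omega^\infty\KO$ is simply a continuous map of spaces, functoriality gives $\alpha(x\circ f)=\alpha(x)\circ f$ in $\pi_{n+k}(\Omega^\infty\KO)$ for free, with no $H$-space or $E_\infty$ structure on the source side whatsoever. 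The only nontrivial ingredient is then purely about the \emph{target}: for a ring spectrum $\mathbf R$, precomposition by $f$ on $\pi_*(\Omega^\infty\mathbf R)$ agrees with multiplication by $Sf$ via the unit $\mathbf S\to\mathbf R$. That is the ``well-known'' fact the paper invokes, and it finishes the first equality in one line.

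For the second equality $\alpha(x)\cdot Sf=\alpha(x)\cdot\alpha(Sf)$, both you and the paper argue the same point: the image of $Sf$ under the unit $\pi_k^s\to KO_k$ coincides with $\alpha(Sf)$ computed via $G\to G/\Spin\xra{c}GL_1(\MSpin)\xra{GL_1(\alpha)}GL_1(\KO)$. The paper's justification is cleaner: since $GL_1(\mathbf S)\simeq G$, one checks from the explicit description of $c$ that $GL_1$ of the unit $\mathbf S\to\MSpin$ factors as $G\to G/\Spin\xra{c}GL_1(\MSpin)$, and then applying $GL_1$ of the ring map $\alpha\colon\MSpin\to\KO$ gives the claim. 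Your phrasing (``$c$ is a map of ring-spectrum units and hence a $\pi_*^s$-module map sending $1\mapsto 1$'') gestures at the same factorization but is less direct. In short: drop steps (1)--(4) for the first identity, keep the $GL_1$-factorization argument for the second, and the proof becomes the two-sentence argument the paper gives.
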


\begin{proof}
As $\alpha$ is continuous, it is compatible with precomposition, and it is well-known that the action of $\pi_*^s$ by precomposition on the homotopy groups of a ring  spectrum agrees with the one through the unit map. This proves the  first identity.

Now recall that $GL_1(\mathbf{S})\simeq G$. Under this equivalence, $GL_1$ of the unit map $\mathbf{S}\to \MSpin$ factors through $c\colon G/\Spin\to GL_1(\MSpin)$ via the canonical projection $G\to G/\Spin$; this follows from the explicit description of $c$ as given above. As $\alpha\colon \MSpin\to \KO$ is a ring map, this implies  the second equality.
\end{proof}

Finally, we would like to identify the map $c$, on homotopy groups, with the canonical map from almost framed bordism to spin bordism. Recall that a homotopy class of (unpointed) maps $f\colon S^n\to G/\Spin$ gives rise to a degree one normal map of a spin manifold $M^n_f$ onto the $n$-sphere, in particular to a spin bordism class $[M_f]\in\Omega_n^{\Spin}$, represented by a pointed homotopy class $c_f\colon S^n\to \Omega^\infty \MSpin$. 

On the other hand, the composite $c\circ f$ is an unpointed homotopy class $S^n\to \Omega^\infty\MSpin$. Since  the short exact sequence
\[0 \to \pi_n \MSpin \to [S^n, \Omega^\infty \MSpin] \to \pi_0 \MSpin\to 0\]
is canonically split, there is a canonical projection
\[\pi\colon [S^n, \Omega^\infty \MSpin]\to \pi_n \MSpin.\]

\begin{lemma}
We have $c_f=\pi(c\circ f)$ as pointed homotopy classes.
\end{lemma}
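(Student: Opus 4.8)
The plan is to unwind both sides of the claimed equality $c_f = \pi(c \circ f)$ by tracing through the geometric meaning of the map $c \colon G/\Spin \to \Omega^\infty \MSpin$ established above, and to match it with the Pontrjagin--Thom description of the almost-framed (equivalently, surgery-theoretic) bordism class $[M_f] \in \Omega_n^{\Spin}$. First I would recall the construction of $[M_f]$: an unpointed homotopy class $f \colon S^n \to G/\Spin$ classifies, up to homotopy, a spin vector bundle $\xi$ over $S^n$ of some rank, together with a fiber homotopy trivialization of its sphere bundle; applying the Pontrjagin--Thom construction to a map $S^{n+k} \to \Thom(\xi)$ representing the fiber homotopy trivialization (after stabilizing to a bundle of rank $k$) yields a spin manifold $M_f$ with a degree one normal map to $S^n$, and $c_f \colon S^n \to \Omega^\infty \MSpin$ is by definition the Pontrjagin--Thom collapse composed with the Thomification $\Thom(\xi) \to \MSpin_k$ of the classifying map $S^n \to B\Spin_k$.

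Next I would compare this with $c \circ f$. The key observation, already implicit in the ``explicit description'' of $c$ given in the paragraph preceding the lemma, is that $c$ is precisely built from the difference class of the two $\MSpin$-orientations on the universal bundle $p_n$ over $G_n/\Spin_n$ — the one coming from the spin structure (which gives the Thomification map $T(p_n) \to \MSpin_n$) and the one coming from the fiber homotopy trivialization (which gives the equivalence $T(p_n) \simeq \Sigma^n_+ (G_n/\Spin_n)$). Pulling back along $f$, the composite $c \circ f \colon S^n \to \Omega^\infty \MSpin$ is the adjoint of the map $\Sigma^n_+ S^n \simeq \Thom(f^*p_n) \to \MSpin_n$, where the first equivalence is the fiber homotopy trivialization and the second is the Thomification of the spin classifying map. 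Under the splitting $\Sigma^n_+ S^n \simeq S^n \vee \Sigma^n_+(\mathrm{pt})$ this decomposes into a ``reduced'' part, which is exactly the Pontrjagin--Thom collapse composing with Thomification that defines $c_f$, and a ``basepoint'' part that lands in $\pi_0 \MSpin$ and is killed by $\pi$. Thus applying $\pi$ to $c \circ f$ recovers $c_f$.

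The step I expect to be the main obstacle is the bookkeeping of basepoints and the identification of the two sphere-bundle/Thom-space splittings: making precise that the canonical splitting of the short exact sequence $0 \to \pi_n\MSpin \to [S^n, \Omega^\infty\MSpin] \to \pi_0\MSpin \to 0$ corresponds, after adjunction, to the splitting $\Sigma^n_+ S^n \simeq S^n \vee S^n$ coming from the disjoint basepoint, and that under this correspondence the fiber homotopy trivialization $T(f^*p_n) \simeq \Sigma^n_+ S^n$ carries the ``Thom class side'' to the reduced summand in a way compatible with how $M_f$ is extracted by Pontrjagin--Thom. Once this dictionary is set up, everything is formal: both $c_f$ and $\pi(c \circ f)$ are represented by the same composite $S^n \xrightarrow{\mathrm{collapse}} \Thom(f^*\xi) \xrightarrow{\mathrm{Thom}} \MSpin_n$, the only subtlety being that $c \circ f$ a priori carries an extra unit-component summand that $\pi$ discards. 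I would therefore organize the proof as: (1) set up the geometric model of $f$ as a fiber homotopy trivialized spin bundle; (2) write down both $c_f$ and $c \circ f$ as adjoints of explicit maps out of a Thom space; (3) identify the two descriptions of the Thom space and check the basepoint components match after applying $\pi$.
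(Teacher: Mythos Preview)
Your approach is correct and essentially matches the paper's: both unwind $c\circ f$ as the adjoint of the composite $S^k\wedge S^n_+ = T(\varepsilon^k) \to T(\gamma)\to \MSpin_k$ and identify $c_f$ as its precomposition with the Thom collapse $S^{n+k}\to S^k\wedge S^n_+$. The paper sidesteps the basepoint bookkeeping you flag by observing that $\pi$ is uniquely characterized by being the identity on $\pi_n\MSpin$ and zero on constant maps, and that Thom-collapse precomposition visibly shares these two properties---so no explicit wedge decomposition is needed (incidentally, your displayed splitting should read $\Sigma^k S^n \vee S^k$, with the bundle rank $k$ kept distinct from the base dimension $n$).
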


(Since $c\circ f$ lands in the 1-component of $\MSpin$, the pointed map $\pi(c\circ f)$ is represented by the loop space difference $c\circ f - 1$.)

\begin{proof}
We first note that, for $f\colon S^n\to G_k/\Spin_k$, the map $c\circ f$ is adjoint to the composite
\begin{equation}\label{eq:normal_invariants_in_MSpin}
S^k\wedge S^n_+ = T(\varepsilon^k) \to T(\gamma) \to \MSpin_{k}
\end{equation}
where $\gamma$ is the pull-back of the universal bundle $p_k$ along $f$ and the first map comes from the fiber homotopy trivialization classified by $f$. 

Now observe that the projection $\pi$ is characterized by the properties that it is the identity on the subgroup $\pi_n\MSpin$ and that it sends constant maps to zero. One can verify by inspection that these properties also hold for the composite
\[ [S^n, \Omega^{k} \MSpin_k]\cong [S^k\wedge S^n_+, \MSpin_k]_* \to  \pi_{n+k} \MSpin_k\]
where we pull back along along the Thom collapse $S^{n+k}\to S^k\wedge S^n_+$ of the $n$-sphere embedded trivially in $S^{n+k}$. Hence, the pointed class $\pi(c\circ f)$ is represented by pull-back of \eqref{eq:normal_invariants_in_MSpin} along the Thom collapse. But this composite is just the definition of $c_f$.
\end{proof}
%


\section{Toda brackets and homotopy spheres}

\subsection{The $\alpha$-invariant on $\pi_*(PL_6/O_6)$} 
\label{subsec:alpha_on_PL6/O6}

Recall that $\pi_{n+1} (PL/O)$ is identified, via smoothing theory, with  the group of 
\remdc{homotopy}
$(n{+}1)$-spheres. Denote by $\aelt_{PL/O}\in\pi_7(PL/O) \cong \Z/28$ the unique element of order 2. Also, for $j\geq 1$ and $\epsilon\in \{1,2\}$, let 
\[f \colon S^{8j-1+\epsilon}\to S^7\]
be any homotopy class such that 
\begin{itemize}
 \item $\alpha(f)=1$,
 \item $f$ is of order 2, and
 \item $f$ is the suspension of some $f'\in \pi_{8j-2+\epsilon}(S^6)$, equally of order 2.
\end{itemize}
In the case $\epsilon=1$, such elements $f$ exist for all  $j \geq 1$ by \cite[Theorem 1.2]{Adams}, where they are called $\mu_{8j+1}\in \pi_{8j+1}^s$. (Adams was mainly concerned about elements in the stable stems; in \cite[Lemma 2.14]{C-S} it was verified that the corresponding elements descend to order 2 elements on $S^7$, actually, on $S^5$.)
We can precompose any such element $f$ by the non-trivial element $\eta\in\pi_{8j+2}(S^{8j+1})$ to obtain a corresponding element for $\epsilon=2$, in view of Lemma \ref{lem:alpha_commutes} and the ring structure on $KO_*$.

As both $f$ and $\aelt_{PL/O}$ have order $2$, the Toda bracket 
\[ \an{f, 2, a_{PL/O}}\subset \pi_{8j+8+\epsilon}(PL/O) \] 
is defined. As explained in section \ref{subsec:alpha} we view the $\alpha$-invariant as a map 
$\pi_* (PL/O)\to KO_*$ by applying the canonical map $p\colon PL/O\to G/O$.

\begin{theorem} \label{thm:alpha_on_Toda}
$\alpha\bigl( \an{f, 2, \aelt_{PL/O}} \bigr) = \{1\} \subset KO_{8j+\epsilon}=\{0,1\}.$
\end{theorem}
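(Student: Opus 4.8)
The plan is to compute $\alpha$ on an element of the Toda bracket $\an{f,2,a_{PL/O}}$ by transporting the computation, via naturality of Toda brackets under maps of spaces, to a Toda bracket in the spectrum $\mathbf{KO}$ (or in $\Omega^\infty\mathbf{KO}$). Recall from Section~\ref{subsec:alpha} that $\alpha$ is induced by an actual map of spaces $PL/O\to G/O\to \Omega^\infty\MSpin\xrightarrow{\alpha}\Omega^\infty\mathbf{KO}$; hence for any map $\beta$ between the relevant spaces one has $\beta_*\an{f,2,a_{PL/O}}\subset \an{f,2,\beta\circ a_{PL/O}}$. Applying this with $\beta=\alpha$ gives
\[
\alpha\bigl(\an{f,2,a_{PL/O}}\bigr)\subset \an{f, 2, \alpha(a_{PL/O})}\subset KO_{8j+\epsilon},
\]
where now $\alpha(a_{PL/O})\in KO_7=0$, so the right-hand Toda bracket lives in the homotopy of the ring spectrum $\mathbf{KO}$ and is built from the stable maps $f$ and $2$ together with a nullhomotopy of $\alpha(a_{PL/O})$.

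Next I would identify this stable Toda bracket. The key point is that $a_{PL/O}$ has order $2$ and $\alpha(a_{PL/O})=1\in KO_7/2$ — wait, more carefully: $a_{PL/O}$ is the order-two element of $\pi_7(PL/O)\cong\Z/28$, and its image in $KO_7=0$ is of course zero, but the relevant datum is that $a_{PL/O}$ represents a $bP$-sphere, so $\alpha(a_{PL/O})=0$ honestly in $KO_7$. The bracket $\an{f,2,0}$ in $\pi_*\mathbf{KO}$ is then, by the standard identification of such Toda brackets (Section~\ref{subsec:Moore_space}), computed via the short exact sequence \eqref{eq:ses_of_moore_groups} applied to $X=\Omega^\infty\mathbf{KO}$: it is the set $c^*\bigl(\pi_{8j+\epsilon}(\mathbf{KO})/2\bigr)$-coset determined by extending the map $2\cdot(-)$; concretely $\an{f,2,z}$ for $2z=0$ equals $\{\bar z\circ \overline{Sf}\}$ where $\bar z\colon M_k\to \Omega^\infty\mathbf{KO}$ extends $z$. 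Since here $z=\alpha(a_{PL/O})$ is the class of a $bP$-sphere, which by Theorem~\ref{thm:1}/\eqref{eq:alpha_factorized} reasoning has $\alpha$-value $0$, I must instead argue using that $a_{PL/O}$ has order $2$ in $\pi_7(PL/O)$ and track where its \emph{torsion extension} to $M_7$ goes. The cleanest route: use that $f$ factors through the Moore space $M_7$ as $\bar a\circ \overline{Sf}$ where $\bar a\colon M_7\to PL/O$ extends $a_{PL/O}$, and apply Lemma~\ref{lem:Moore_extension} together with Lemma~\ref{lem:alpha_commutes}. By Lemma~\ref{lem:Moore_extension}, $2\bar a = [a_{PL/O}\circ\eta]\in\pi_8(PL/O)/2$, and by Lemma~\ref{lem:alpha_commutes} this forces $2\,\alpha(\bar a) = \alpha(a_{PL/O}\circ\eta)=\alpha(a_{PL/O})\cdot\eta$; one then computes $\alpha$ of the full composite $\bar a\circ\overline{Sf}$ and uses that $Sf$ stabilizes to an Adams element $\mu_{8j-8+\epsilon}\in\pi^s_{8j-8+\epsilon}$ with $\alpha(\mu)=1$, and that $KO_*$ is a $KO_*$-module where multiplication by $\mu_{8j-8+\epsilon}$ (via the unit $\mathbf S\to\mathbf{KO}$) realizes the nontrivial element of $KO_{8j+\epsilon}=\Z/2$ starting from the appropriate generator.

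Concretely I would organize the computation as follows. First, note $\an{f,2,a_{PL/O}}$ is nonempty and, by Lemma~\ref{lem:alpha_commutes}, $\alpha$ is a map of $\pi^s_*$-modules compatible with precomposition. Second, reduce to the stable bracket $\an{Sf, 2, \alpha(a_{PL/O})}$ in $\pi_*\mathbf{KO}$ via the space-level map; even though $\alpha(a_{PL/O})=0$, the bracket is not a priori the zero set because the nullhomotopy is constrained by lifting through $PL/O$, i.e.\ by the element $\bar a\in\pi_7^M(PL/O)$ extending $a_{PL/O}$. Third, push $\bar a$ forward to $\bar a_{KO}:=\alpha_*(\bar a)\in \pi_7^M(\Omega^\infty\mathbf{KO})$; by \eqref{eq:ses_of_moore_groups} and $KO_7=0$ we have $\pi_7^M(\Omega^\infty\mathbf{KO})\cong KO_8/2 = \Z/2$, and the key subclaim is that $\bar a_{KO}$ is the \emph{nonzero} class there. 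This is where the order-two (not order-one-mod-something) nature of $a_{PL/O}$ and the specific structure of $PL/O$ enters: one shows the Moore-space extension of the generator of the $bP$-part detects the generator of $KO_8/2$, perhaps by comparing with the known computation that $\alpha$ restricted to $bP_8$-adjacent data behaves like the $d_{\mathbb R}$-invariant, or by an explicit Adams-operation / $e$-invariant computation. Fourth, once $\bar a_{KO}\neq 0$, the composite $\bar a_{KO}\circ\overline{Sf}$ equals (by the module structure of $KO_*$ over $\pi^s_*$ and the fact that $\overline{Sf}$ realizes $\mu_{8j-8+\epsilon}$ through $M_7$) the product $\mu_{8j-8+\epsilon}\cdot(\text{generator of }KO_8/2)$, which is the nonzero element of $KO_{8j+\epsilon}=\Z/2$ — precisely because Adams' $\mu$-family multiplies nontrivially on these $KO$-groups. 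Hence $\alpha\bigl(\an{f,2,a_{PL/O}}\bigr)=\{1\}$.

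The main obstacle I expect is the third step: showing that the mod-$2$ Moore extension $\bar a_{KO}$ of the image of $a_{PL/O}$ is the nontrivial class in $\pi_7^M(\Omega^\infty\mathbf{KO})\cong\Z/2$. The class $a_{PL/O}$ itself maps to $0$ in $KO_7$, so its $\alpha$-invariant carries no information; all the content is in how its \emph{order-two witness} interacts with the $\mathbf{KO}$-module structure, equivalently in a secondary invariant of $a_{PL/O}$. I would try to pin this down either via Lemma~\ref{lem:Moore_extension} (computing $2\bar a_{KO}=[\alpha(a_{PL/O})\circ\eta]=0$, which alas is consistent with both classes, so this alone is insufficient) or — more promisingly — by exhibiting $a_{PL/O}$ explicitly as related to a generator of $\pi_7(O)\to\pi_7(PL/O)$ or to the $J$-homomorphism image, so that its Moore extension is literally Adams' $\mu_1$-style construction in $\mathbf{KO}$, whose nontriviality is classical; alternatively one invokes that the composite $\an{f,2,a_{PL/O}}$ must be detected because of the known surjectivity of $\alpha$ on $\pi_7(PL/O)$... no, that surjectivity is trivial since $KO_8=\Z/2$ and $\Theta_8$... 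I would ultimately likely settle this by a direct computation with the $e$-invariant / Adams $d$-invariant as in \cite[\S7]{Adams}, using the factorization \eqref{eq:alpha_factorized} to reinterpret everything in terms of the well-understood $\alpha$-invariant on exotic spheres and their mod-$2$ Moore-space extensions.
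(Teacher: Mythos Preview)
Your overall framework is the same as the paper's: represent an element of the Toda bracket as $\bar a\circ\overline{Sf}$ with $\bar a\in\pi_7^M(PL/O)$ a Moore extension of $a_{PL/O}$, push $\bar a$ forward along $\alpha$, and then multiply by the stable class of $f$ using Lemma~\ref{lem:alpha_commutes}. You have correctly located the crux of the argument as your ``third step'': showing that the image $\bar a_{KO}\in\pi_7^M(\Omega^\infty\mathbf{KO})\cong KO_8/2\cong\Z/2$ is nonzero. But you do not prove this, and the routes you sketch do not work as stated.

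First, note that your opening gambit --- the containment $\alpha\bigl(\an{f,2,a_{PL/O}}\bigr)\subset\an{f,2,\alpha(a_{PL/O})}=\an{f,2,0}$ --- is useless: since the third entry is zero, the right-hand bracket has full indeterminacy $f\cdot(KO_8/2)=KO_{8j+\epsilon}$ and so equals $\{0,1\}$. All the content really is in the Moore extension, as you say. But trying to compute $\bar a_{KO}$ directly in $\mathbf{KO}$ is hopeless, precisely because $a_{PL/O}$ maps to $0\in KO_7$; there is nothing left in $\mathbf{KO}$ to pin down which of the two Moore extensions you have. Your attempt via Lemma~\ref{lem:Moore_extension} indeed gives no information, and your other suggestions (relating $a_{PL/O}$ to $\pi_7(O)$, or an $e$-invariant computation) are not developed.

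The paper's idea, which you are missing, is to \emph{not} pass all the way to $\mathbf{KO}$ but to stop at the intermediate space $G/O$. One still has $p_*(a_{PL/O})=0$ in $\pi_7(G/O)$ (a surgery-theoretic fact), so $p_*(\bar a)$ lifts to some $\delta(a_{PL/O})\in\pi_8(G/O)$, and the Toda bracket element maps to $\delta(a_{PL/O})\cdot Sf$. The nontriviality of $\delta(a_{PL/O})$ modulo $2$ and modulo $p_*\pi_8(PL/O)$ is then established by a snake lemma argument on the map of short exact sequences \eqref{eq:ses_of_moore_groups} for $PL/O\to G/O$: the middle vertical map $\pi_7^M(PL/O)\to\pi_7^M(G/O)$ is injective because $\pi_8^M(G/PL)=0$ (this uses the surgery computations $\pi_8(G/PL)\cong\Z$, $\pi_9(G/PL)=0$), and hence the connecting homomorphism of the snake lemma is injective, sending $a_{PL/O}\neq 0$ to a nonzero class. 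Finally, one checks (Lemma~\ref{lem:alpha_surjective} and the computation of $p_*$ on $\pi_8$) that $\alpha$ induces an isomorphism from this quotient to $KO_8/2$, so $\alpha(\delta(a_{PL/O}))$ is odd. The point is that the comparison $PL/O\to G/O$, controlled by $G/PL$, retains exactly enough information to make the snake lemma bite; passing directly to $\mathbf{KO}$ throws this away.
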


An ingredient in the proof of Theorem \ref{thm:alpha_on_Toda} is the following well-known lemma.

\begin{lemma}\label{lem:alpha_surjective}
The $\alpha$-invariant $\pi_8 (G/O) \to KO_8$ is surjective.
\end{lemma}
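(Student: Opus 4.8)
The plan is to identify the relevant segment of the long exact sequence of the fibration $O \to G \to G/O$ and to use that the classical $J$-homomorphism $\pi_7(O) \to \pi_7^s$, under the identification $\pi_7^s \cong \pi_7(G)$, is surjective onto the image of $J$, while on the other hand $\alpha\colon \pi_8(G/O) \to KO_8$ should be seen to factor the (already understood) surjection $\alpha\colon \Omega^{\Spin}_8 \to KO_8$. Concretely, recall that $KO_8 = \Z/2$ and that the generator is the $\alpha$-invariant of a Bott manifold $B^8$, a closed spin $8$-manifold with $\hat A(B^8)$ odd; this is the standard fact I would cite. Equivalently, a generator of $KO_8$ is realized by the normal invariant $S^8 \to G/O$ of such a Bott manifold (viewed as a degree-one normal map to $S^8$ after framed surgery below the middle dimension), and by the last lemma of Section~\ref{subsec:alpha} the composite $S^8 \to G/O \xrightarrow{c} \Omega^\infty\MSpin \xrightarrow{\alpha} KO$ computes precisely $\alpha$ of the associated spin bordism class.

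First I would make precise that the normal invariant construction gives a surjection $\pi_8(G/O) \to \Omega^{\Spin}_8 / (\text{image of } \pi_8 O \to \pi_8 G \to \Omega^{\Spin}_8$, i.e.\ framed bordism contributions$)$ in the relevant range; more cleanly, since $\alpha$ kills the image of framed bordism (the $\alpha$-invariant of a stably parallelizable manifold vanishes in $KO_8$ because $\hat A$ of such a manifold is zero), it suffices to exhibit a single closed spin $8$-manifold with nonzero $\alpha$ which is a \emph{normal invariant}, i.e.\ admits a degree-one normal map to $S^8$. Any simply connected spin $8$-manifold with vanishing signature and Euler characteristic matching $S^8$ modulo the standard indeterminacy works after surgery; a Bott manifold $B^8$ (e.g.\ a suitable hypersurface, or $\mathbb{HP}^2$ has $\hat A = 0$ so one needs the genuine Bott manifold with $\hat A$ odd) does the job. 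Then the class $[f] \in \pi_8(G/O)$ it determines has $\alpha([f]) = \alpha([B^8]) = 1 \in KO_8$, which is exactly surjectivity.

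The key steps, in order: (1) identify $KO_8 \cong \Z/2$ generated by $\alpha(B^8)$ for a Bott manifold $B^8$; (2) observe $B^8$ can be taken simply connected with signature $0$, hence after surgery below the middle dimension represents a normal invariant $\nu_B\colon S^8 \to G/O$; (3) invoke the identification (last lemma of §\ref{subsec:alpha}) that $\alpha(c \circ \nu_B) = \alpha([B^8])$ as elements of $KO_8$, so that $\alpha(\nu_B) = 1$; (4) conclude $\alpha\colon \pi_8(G/O)\to KO_8$ hits the generator. Alternatively, and perhaps more in the spirit of the paper, one can quote that the composite $\Omega^{\Spin}_8 \to KO_8$ is onto (Anderson--Brown--Peterson, or just the Bott manifold computation) and that every element of $\Omega^{\Spin}_8$ of nonzero $\alpha$-invariant is, modulo framed bordism, the manifold underlying a normal invariant.

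\textbf{Main obstacle.} The only real subtlety is step~(2): ensuring that a spin manifold with nontrivial $\alpha$-invariant can be arranged to be a normal invariant over $S^8$, i.e.\ that the relevant surgery obstructions and the normal data match up so that it lies in the image of $\pi_8(G/O) \to \Omega^{\Spin}_8$. This is classical surgery theory in dimension $8$ (simply connected, so the surgery obstruction is the signature, which vanishes for a Bott manifold, and one can kill homology below the middle dimension by surgery respecting the normal map), but it is the point where one must be careful rather than wave hands. Everything else is bookkeeping with the exact sequences and the constructions of §\ref{subsec:alpha}.
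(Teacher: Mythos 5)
Your general strategy --- realize a generator of $KO_8$ by a spin bordism class coming from a normal invariant $\nu\colon S^8\to G/O$ --- is the same geometric reading of $\alpha$ that the paper uses (via the last lemma of Section~\ref{subsec:alpha}), but the execution has two substantive problems and does not actually establish the claim.

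First, $KO_8\cong\Z$, not $\Z/2$; this is a Bott periodicity slip. The $\alpha$-invariant in degree $8$ is an integer ($\hat A$-genus), so ``surjective'' means hitting $\pm 1\in\Z$, which is a genuinely stronger requirement than hitting the nonzero element of a $\Z/2$. Several of your remarks (e.g.\ ``$\alpha$ kills framed bordism'' as a mod-$2$ vanishing statement) only make sense under the $\Z/2$ misreading.

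Second and more seriously, step (2) is wrong. The condition for a spin $8$-manifold $M$ to admit a degree-one normal map to $S^8$ is \emph{not} that its signature vanish, but that its stable normal bundle be trivial over the $7$-skeleton, i.e.\ that $\tfrac{p_1}{2}(M)=0$; the signature is the surgery obstruction to improving a given normal map to a homotopy equivalence, not an obstruction to the normal map's existence, and surgery below the middle dimension does not change $p_1$. Moreover, once $p_1=0$ the two characteristic numbers become proportional: $\hat A_2=-p_2/1440$ and $L_2=7p_2/45$, hence $\operatorname{sign}(M)=-224\,\hat A(M)$, so any almost-parallelizable spin $8$-manifold with $\hat A(M)=\pm 1$ has signature $\mp 224\neq 0$. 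Your plan of fixing the signature by connected sums with, say, $\h\mathbb{P}^2$ would reintroduce a nonzero $p_1$ and destroy the normal-invariant property. In short, whether a spin $8$-manifold with $p_1=0$ and $\hat A=\pm 1$ exists (equivalently, whether the composite $\pi_8(G/O)\to\pi_8(BO)\to\Z$ given by $\hat A$ hits $\pm 1$) is exactly the content of the lemma, and it is left unproved.

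The paper's proof is more direct and sidesteps the surgery discussion entirely: since $\alpha$ on $\pi_8(G/O)$ only depends on the image in $\pi_8(BO)\cong\Z$, and that image is $\ker(J)=240\Z$, it suffices to know $\hat A_2$ of a generator $t$ of $\pi_8(BO)$, which is $\mp\tfrac{1}{240}$ because $p_2(t)=\pm 6$; hence $\hat A(240t)=\mp 1$. That Pontryagin-class computation is precisely the missing heart of your argument.
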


\begin{proof}
By its geometric definition, the $\alpha$-invariant of $x\in \pi_8 (G/O)$ in
$KO_{8}=\Z$ is calculated as 
the $\hat A$-class of the stable vector bundle \remts{over $S^8$} classified
by $x$. \remts{Hence 
  the $\alpha$-invariant
only depends on the image of $x$ in $\pi_{8}(BO)$, which is an infinite cyclic
group 
generated say by $t\in \pi_8(BO)\cong\mathbb{Z}$.} Now the image of
$\pi_{8}(G/O)$ in $\pi_{8}(BO)$ is precisely  the kernel of the
$J$-homomorphism, which is 
generated by $240 t$ \cite[6.26]{Lueck}. \remts{But the second Pontryagin class of $t$ in 
$H^8(S^8;\integers)$ is $\pm 6\cdot[S^8]$ by \cite{K} where we write $[S^8]$
for a generator of $H^8(S^8;\integers)\iso\integers$. Therefore the $\hat
A$-class of $t$ computes as 
$$\hat A_2(t)= \frac{1}{2^7\cdot 3^2\cdot 5}(-4
p_2(t)+7 p_1(t)^2)= -\frac{\pm 4\cdot 6 [S^8]}{2^7\cdot 3^2\cdot 5} =\mp
\frac{1}{240}[S^8],$$ compare \cite[p.\ 231]{LM}.}  \remts{Hence
the $\hat A$-class of $x=240t$ is equal to $\mp 1\cdot [S^8]$, i.e.~a
generator of $KO_8\cong
\integers$  is in the image of the $\alpha$-invariant, as we have claimed.}
\end{proof}

We will also use the  following well-known calculations from the surgery exact sequence for homotopy spheres (see e.g.\ \cite[Chapter 6]{Lueck}):
\begin{enumerate}
 \item\label{fact_I} $p_*\colon \pi_7 (PL/O)\to\pi_7 (G/O)$ is the zero map.
 \item\label{fact_II} $p_*\colon \pi_8 (PL/O)\to \pi_8 (G/O)$ is isomorphic to the inclusion $\Z/2\to \Z\oplus \Z/2$.
 \item\label{fact_III} $\pi_8 (G/PL)=\Z$ and $\pi_9(G/PL)=0$.
\end{enumerate}

\begin{proof}[Proof of Theorem \ref{thm:alpha_on_Toda}]
Since $\aelt_{PL/O}\in\pi_7(PL/O)$ is of order two, it  has a lift to some $\bar \aelt\in \pi_7^M(PL/O)$ which we can further map to $p(\bar\aelt)\in \pi_7^M(G/O)$. But $p(\aelt_{PL/O})=0\in\pi_7(G/O)$  by calculation \ref{fact_I}  so we may choose a lift of $p(a)$ to an element 
\[\delta(\aelt_{PL/O})\in\pi_8(G/O)\cong\Z\oplus\Z/2.\]
With these definitions, any element in the  Toda bracket $\an{f,2,\aelt_{PL/O}}$ is of the form $\bar\aelt\cdot \ol{Sf}$, hence its image in $G/O$ is of the  form
\[p(\bar\aelt)\cdot \ol{Sf}= \delta(\aelt_{PL/O})\cdot Sf\]
for a specific choice of $\delta(\aelt_{PL/O})$.

We proceed to calculate the $\alpha$-invariant of $\delta(\aelt_{PL/O})$. To do this, we first show that the residue class of $\delta(\aelt_{PL/O})$ in 
\[C:=\pi_8 (G/O)/(2, p_*\pi_8 (PL/O))\]
is non-trivial. Indeed, this residue class is precisely the image of 
\remws{\[\aelt_{PL/O} \in \ker\bigl(p\colon \eot{\pi_7(PL/O)} \to \eot{\pi_7(G/O)}\bigr)\]}
under the 
\remws{connecting}  map for the snake lemma, applied to the diagram
\begin{equation}\label{eq:diagram_for_snake}
\xymatrix{
0 \ar[r] & \pi_8(PL/O)/2 \ar[r] \ar[d]^p & \pi_7^M(PL/O) \ar[r] \ar[d]^p & \eot{\pi_7(PL/O)} \ar[d]^p \ar[r] & 0\\
0 \ar[r] & \pi_8(G/O)/2 \ar[r] & \pi_7^M(G/O) \ar[r]& \eot{\pi_7(G/O)} \ar[r] & 0
}
\end{equation}
coming from \eqref{eq:ses_of_moore_groups}.

As a consequence of sequence \eqref{eq:ses_of_moore_groups} applied to $G/PL$ together with calculation \ref{fact_III}, we have $\pi_8^M(G/PL)=0$. By the Puppe sequence for the fibration sequence $PL/O\to G/O\to G/PL$, the middle vertical map in diagram \eqref{eq:diagram_for_snake} is therefore injective. The snake lemma implies that the  
\remws{connecting map of  the snake lemma} is injective, so the residue class of $\delta(\aelt_{PL/O})$ in $C$ is non-zero as claimed.

Next we note that the $\alpha$-invariant $\pi_8(G/O)\to KO_8=\Z$ becomes zero after restricting 
\remdc{to}
the torsion group $\pi_8 (PL/O)$. So it induces a well-defined map 
\begin{equation}\label{eq:induced_alpha}
 C\to  KO_8/2=\Z/2,
\end{equation}
which, in view of Lemma \ref{lem:alpha_surjective}, is surjective. Indeed it is bijection as 
\remdc{Calculation \ref{fact_II} above} implies that $C \cong\Z/2$. 
We conclude that the $\alpha$-invariant of $\delta(\aelt_{PL/O})$ is odd. 

Now it follows from Lemma \ref{lem:alpha_commutes} that the $\alpha$-invariant of $\delta(\aelt_{PL/O})\cdot Sf$ in $KO_{8j+\epsilon}=\Z/2$ is non-zero, in view of  the ring structure  of $KO_*$ and our assumption 
\remws{that $\alpha(f)=1$}. But any element of the Toda bracket was of this form. 
\end{proof}

\begin{proof}[Proof of Theorem \ref{thm:PL/O}]
As
\[S_*\an{f,2,\aelt_{PL_6/O_6}} \subset \an{f,2,\aelt_{PL/O}}\subset \pi_{8j+\epsilon}(PL/O),\]
part (i) follows directly from Theorem \ref{thm:alpha_on_Toda}. We proceed to show (ii).

As pointed out in section \ref{subsec:Moore_space}, every element 
$g \in \an{f, 2, \aelt_{PL_6/O_6}}$ is realised as a composition
\[ g = \ol \aelt_{PL_6/O_6} \circ \ol{Sf} \colon S^{8j+\epsilon} \to M_7 \to X,\]
for specific choices of $\ol \aelt_{PL_6/O_6}$ and $\ol{Sf}$, where $\ol \aelt_{PL_6/O_6}$ extends $\aelt_{PL_6/O_6}$ over $M_7$, and 
$c \circ \ol{Sf} = Sf$ where $c \colon M_7 \to S^8$ is the map collapsing the $7$-cell to
a point. That is, $g$ is the image of $\ol \aelt_{PL_6/O_6}$ under the map
\begin{equation}\label{eq:pull_back_from_Moore}
(\ol{Sf})^* \colon \pi_7^M(PL_6/O_6) \to \pi_{8j+\epsilon}(PL_6/O_6),
\quad \ol b \mapsto \ol b \circ \ol{Sf}.
\end{equation}

Since
$f = Sf'$ where $2f'=0 \in \pi_{8j-2+\epsilon}(S^6)$, 
we can and do choose the map
$\overline{Sf}$ to be the suspension of a map 
$S^{8j-1+\epsilon}\to M_6$.  In this case the map $\ol{Sf} \colon S^{8j+\epsilon} \to M_7$ 
is a map of co-$H$-spaces and so $(\ol{Sf})^*$, defined in \eqref{eq:pull_back_from_Moore} above, 
is a group homomorphism. 

Now $\aelt_{PL_6/O_6}$ is divisible by 2 so $\eta\circ \aelt_{PL_6/O_6}=0$. It follows from Lemma \ref{lem:Moore_extension} that  every lift
$\bar \aelt_{PL_6/O_6}$ of $\aelt_{PL_6/O_6}$ has order 2.
It follows that $g$, as the homomorphic image of $\bar \aelt_{PL_6/O_6}$, has order 2.
\end{proof}

In Sections \ref{subsec:alpha_on_PL6} and \ref{sec:ImJ} 
below we will repeat the arguments of
Theorem \ref{thm:alpha_on_Toda} and the proof of Theorem \ref{thm:PL/O},
replacing $PL_6/O_6$ with other spaces.  To avoid repetition we summarise
these arguments as follows.  Let $h \colon X \to PL/O$ be a map.  Abusing
notation define $\alpha_X  := \alpha_{PL/O} \circ h \colon X\to KO$.
%

\begin{theorem}\label{thm:meta-theorem}
Suppose that $a \in 2 \pi_7(X)$ satisfies $2a = 0$ and $h_*(a) \neq 0$.
Then $\alpha_{X*} \colon \pi_*(X) \to KO_*$
is split onto in degrees $* \equiv 1, 2 \pmod 8$ and $\ast > 2$.
\end{theorem}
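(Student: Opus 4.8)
\textbf{Proof strategy for Theorem \ref{thm:meta-theorem}.}
The plan is to run the argument of Theorem \ref{thm:alpha_on_Toda} and Theorem \ref{thm:PL/O} verbatim, but with $PL_6/O_6$ replaced by the abstract space $X$, using only the two hypotheses $a\in 2\pi_7(X)$, $2a=0$, $h_*(a)\neq 0$ together with the already-established facts about $PL/O$ and the naturality of the space-level $\alpha$-invariant under composition products and Toda brackets (Lemma \ref{lem:J_and_Toda}, Lemma \ref{lem:alpha_commutes}, and the relation between mod~$2$ Moore-space maps and Toda brackets from Section \ref{subsec:Moore_space}). First I would fix $j\geq 1$ and $\epsilon\in\{1,2\}$, and choose a homotopy class $f\colon S^{8j-1+\epsilon}\to S^7$ with $\alpha(f)=1$, of order $2$, and a double suspension $f=S f'$ with $f'\in\pi_{8j-2+\epsilon}(S^5)$ of order $2$; such $f$ exist by \cite[Theorem 1.2]{Adams} and \cite[Lemma 2.14]{C-S} for $\epsilon=1$, and for $\epsilon=2$ by precomposing with $\eta$ and invoking Lemma \ref{lem:alpha_commutes} and the ring structure on $KO_*$.

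Next, since $2a=0$ and $2f=0$, the Toda bracket $\an{f,2,a}\subset\pi_{8j+\epsilon}(X)$ is defined, and by the discussion in Section \ref{subsec:Moore_space} every element of it has the form $g=\bar a\circ\overline{Sf}$ for a lift $\bar a\in\pi_7^M(X)$ of $a$ and a lift $\overline{Sf}\colon S^{8j+\epsilon}\to M_7$ of $Sf$. For the $\alpha$-invariant I would push forward along $h$: $h_*\an{f,2,a}\subset\an{f,2,h_*(a)}\subset\pi_{8j+\epsilon}(PL/O)$, and here $h_*(a)\in\pi_7(PL/O)\cong\Z/28$ is a nonzero element killed by $2$, hence \emph{equals} $\aelt_{PL/O}$, the unique order-$2$ element. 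Therefore $\alpha(g)=\alpha_{PL/O}(h_*(g))\in\alpha\bigl(\an{f,2,\aelt_{PL/O}}\bigr)=\{1\}$ by Theorem \ref{thm:alpha_on_Toda}; so every element of the Toda bracket has nontrivial $\alpha$-invariant, proving the analogue of Theorem \ref{thm:PL/O}(a).

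To get an actual element realizing the splitting and to finish, I would argue exactly as in the proof of Theorem \ref{thm:PL/O}(b): because $f=Sf'$ is a double suspension, $\overline{Sf}$ may be taken to be a suspension, so it is a map of co-$H$-spaces and $(\overline{Sf})^*\colon\pi_7^M(X)\to\pi_{8j+\epsilon}(X)$ is a group homomorphism; since $a\in 2\pi_7(X)$ we have $\eta\circ a=0$, so Lemma \ref{lem:Moore_extension} forces every lift $\bar a\in\pi_7^M(X)$ to have order $2$, whence $g=(\overline{Sf})^*(\bar a)$ has order dividing $2$, and it is nonzero since $\alpha(g)=1$. Thus $g$ generates a $\Z/2$ mapping onto $KO_{8j+\epsilon}=\Z/2$ under $\alpha_X$, which exhibits the split surjection in degrees $*\equiv 1,2\pmod 8$ with $*>2$ (the constraint $*>2$, i.e. $8j+\epsilon\geq 9$, is exactly what guarantees $j\geq 1$ and that the relevant homotopy of $S^7$ is in the stable range where Adams' elements and the double-suspension condition are available). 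The only genuinely nontrivial input is Theorem \ref{thm:alpha_on_Toda}, which is already proved; the main thing to be careful about here is the identification $h_*(a)=\aelt_{PL/O}$ and the naturality statement $h_*\an{f,2,a}\subset\an{f,2,h_*(a)}$, together with checking that all the co-$H$-space/order arguments of Theorem \ref{thm:PL/O}(b) only used the abstract properties of $a$ and not anything special about $PL_6/O_6$ — which they do.
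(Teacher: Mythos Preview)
Your proposal is correct and follows essentially the same route as the paper: push the Toda bracket $\an{f,2,a}$ forward along $h$, identify $h_*(a)$ with $\aelt_{PL/O}$ (the unique order-$2$ element of $\pi_7(PL/O)\cong\Z/28$), invoke Theorem~\ref{thm:alpha_on_Toda}, and then repeat the co-$H$-space/Lemma~\ref{lem:Moore_extension} argument from Theorem~\ref{thm:PL/O}(b) using only $a\in 2\pi_7(X)$. The only point the paper adds that you omit is a one-line reduction: if $X$ is not simply connected, pass to its universal cover first, so that the mod~$2$ Moore-space machinery of Section~\ref{subsec:Moore_space} (in particular the short exact sequence~\eqref{eq:ses_of_moore_groups}) applies cleanly.
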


\begin{proof}
If $X$ is not simply connected we can and do replace $X$ by its universal cover.
Since $2a = 0$ we can form the Toda bracket
$\an{f, 2, a} \subset \pi_{8j+\epsilon}(X)$.  By naturality of Toda brackets and
the $\alpha$ invariant
\[ \alpha(\an{f, 2, a}) = \alpha(\an{f, a, h_*(a))} = \{1\}, \]
where the last equality holds by Theorem \ref{thm:alpha_on_Toda}.
The proof of Theorem \ref{thm:PL/O}
only used that $a_{PL_6/O_6} \in 2 \pi_7(PL_6/O_6)$,
so it may be repeated with $a \in 2 \pi_7(X)$ to show
that $\an{f, 2, a}$ contains an element of order two.
This completes the proof.
\end{proof}


\subsection{The $\alpha$-invariant on $\pi_*(PL_6)$} \label{subsec:alpha_on_PL6}
In this subsection we prove Theorem \ref{thm:PL6}, which states that the $\alpha$-invariant
\[ \alpha \colon \pi_{8j+\epsilon}(PL_m) \to KO_{8j+\epsilon} \]
is a split surjection for all $j \geq 1$, $m \geq 6$ and $\epsilon \in\{ 1, 2\}$.
Let $v \colon PL \to PL/O$ be the natural map
\remdc{and for $d \in \Z$, let $\rho_d \colon \Z \to \Z/d$ denote reduction mod~$d$.}
The following lemma is well-known.

\begin{lemma} \label{lem:pi7PL}
The homomorphism $v_* \colon \pi_7(PL) \to \pi_7(PL/O)$ is isomorphic to the surjection 
$\remdc{\rho_7 \oplus \id \colon \Z \oplus \Z/4 \xra{~~~} \Z/7 \oplus \Z/4.}$
\end{lemma}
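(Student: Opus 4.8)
The plan is to identify the homomorphism $v_* \colon \pi_7(PL) \to \pi_7(PL/O)$ by fitting it into the long exact sequence of the fibration $O \to PL \to PL/O$ and comparing with the known homotopy groups of $O$ in the relevant degrees. Recall that $\pi_7(O) \cong \Z$, $\pi_8(O) \cong \Z/2$ and $\pi_7(PL/O) \cong \Theta_8 \cong \Z/2$ while $\pi_8(PL/O) \cong \Theta_9 \cong (\Z/2)^3$ (we will only need the orders, not the full group structure). The first step is to pin down $\pi_7(PL)$: Kervaire--Milnor and the identification $PL/O \simeq$ the homotopy fiber give $\pi_7(PL) \cong \Z \oplus \Z/4$, where the $\Z$ comes from $\pi_7(O)$ and the torsion $\Z/4$ from $bP_8 \cong \Z/28$ meeting $\coker J_8$; more precisely one uses that $\pi_7(PL/O) \cong \Theta_8 \cong \Z/2$ together with the boundary map $\pi_8(PL/O) \to \pi_7(O)$.

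First I would set up the ladder comparing the fibration sequence $O \to PL \to PL/O$ with the surgery-theoretic description of $PL/O$ and with the homotopy sequence in which $\pi_n(PL/O) \cong \Theta_{n+1}$. From the portion
\[
\pi_8(PL/O) \xra{\partial} \pi_7(O) \xra{} \pi_7(PL) \xra{v_*} \pi_7(PL/O) \xra{\partial} \pi_6(O)
\]
one reads off that $v_*$ is surjective once one knows $\partial \colon \pi_7(PL/O) \to \pi_6(O)$ is zero, which follows since $\pi_6(O) = 0$. So $v_*$ is onto $\pi_7(PL/O) \cong \Z/2$. The remaining task is to compute the kernel and the behaviour on torsion. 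The kernel of $v_*$ is the image of $\pi_7(O) \cong \Z$ modulo the image of $\partial \colon \pi_8(PL/O) \to \pi_7(O)$; here one invokes the classical computation (e.g.\ via the $J$-homomorphism and the Adams $e$-invariant, or directly from Kervaire--Milnor) that this boundary map has image of index exactly $2$ (the generator $240\,t$ versus a generator related to $bP_8$), giving that $\pi_7(PL)$ has a $\Z$-summand mapping onto $\Z/2 \subset \Z/7 \oplus \Z/4$... wait --- more carefully, the point is that the $\Z$ in $\pi_7(PL)$ maps to $\pi_7(PL/O) \cong \pi_7(PL/O)$ through the $\Z/7$-reduction when we identify $\pi_7(PL/O)$ with the relevant cyclic summand of $\Theta_8$. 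I would extract the identification $\pi_7(PL/O) \cong \Z/7 \oplus \Z/4$ from the splitting of $\Theta_8$ into $bP_8$ and $\coker J_8$, using $bP_8 \cong \Z/28 \cong \Z/4 \oplus \Z/7$.

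Concretely, the key steps in order are: (1) compute $\pi_7(PL) \cong \Z \oplus \Z/4$ and $\pi_7(PL/O) \cong \Z/7 \oplus \Z/4$ from Kervaire--Milnor and the fibration $O \to PL \to PL/O$, noting $\pi_6(O) = \pi_6(PL) = 0$ and $bP_8 \cong \Z/28$; (2) show $v_*$ is surjective, using $\partial\colon \pi_7(PL/O)\to\pi_6(O) = 0$; (3) identify the torsion summand $\Z/4 \subset \pi_7(PL)$ with the $bP$-type $\Z/4 \subset \pi_7(PL/O)$ on which $v_*$ is the identity --- this holds because the torsion of $\pi_7(O) = \Z$ is trivial so no torsion of $\pi_7(PL)$ can come from $O$, forcing the $\Z/4$ to inject via $v_*$, and by order count it must be all of the $\Z/4$-summand of $\Theta_8$ that bounds parallelizable manifolds; (4) identify the free summand: $\Z = \pi_7(O) \hookrightarrow \pi_7(PL)$ composed with $v_*$ lands in the $\Z/7$-summand (the part of $\Theta_8$ coming from $\coker J$ has odd order $\cdot$... no: $\coker J_8 \cong \Z/2$, not $\Z/7$), so I should instead say the $\Z/7$ lives inside $bP_8$ and $v_*$ restricted to the image of $\pi_7(O)$ hits it via reduction mod $7$, which is the content of $e$-invariant computations for the image of $J$ in dimension $7$.

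The main obstacle I expect is step (4): correctly matching the free generator of $\pi_7(PL)$ (equivalently, the generator of $\pi_7(O)$) with the mod-$7$ reduction map onto the $\Z/7$-summand of $\Theta_8 \cong \Z/28$. This requires the precise image of the $J$-homomorphism $J \colon \pi_7(O) \to \pi_7^s$ and how it sits inside $\Theta_8$ via the Kervaire--Milnor exact sequence --- essentially the statement that $bP_8 \cong \Z/28$ is generated by the Milnor sphere and that the composite $\pi_7(O) \to \pi_7(PL) \to \pi_7(PL/O) \cong \Theta_8$ realizes the index-$28$... rather index-$7$-visible part via the relation $\pi_7(G/PL) \to \pi_7(BPL)$ and the coincidence $\pi_7(O) \otimes \Z/7 \cong \Z/7$. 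Since the lemma is asserted to be well-known, I would cite the surgery exact sequence computation (as in \cite[Chapter 6]{Lueck}) and the standard identification of $bP_8$, rather than redo the $e$-invariant calculation; the remaining bookkeeping --- that the $\Z/4$ maps isomorphically and the $\Z$ maps by $\rho_7$ --- then follows by comparing orders and the $2$-local versus odd-local splitting of $(PL/O)$ discussed in the introduction.
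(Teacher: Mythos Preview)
Your proposal contains a pervasive indexing error that undermines most of the specific computations. Smoothing theory gives $\Psi\colon \Theta_{n}\xrightarrow{\cong}\pi_{n}(PL/O)$, so $\pi_7(PL/O)\cong\Theta_7\cong\Z/28$, \emph{not} $\Theta_8\cong\Z/2$ as you write. Likewise $\pi_8(PL/O)\cong\Theta_8\cong\Z/2$, not $\Theta_9$. The group $bP_8$ sits inside $\Theta_7$ (the Kervaire--Milnor sequence is $0\to bP_{n+1}\to\Theta_n\to\coker(J_n)$), and in fact $bP_8=\Theta_7\cong\Z/28$ since $\coker(J_7)=0$. There is no ``$\Z/4$-summand of $\Theta_8$'', and the $\Z/7\oplus\Z/4$ in the target is simply the Chinese-remainder decomposition of $\Theta_7\cong\Z/28$; it has nothing to do with $\coker(J_8)$ or the splitting of $\Theta_8$.

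Once the indices are corrected, your skeleton is sound and matches the paper's approach, which is very brief: cite that $\pi_7(SPL)\cong\Z\oplus\Z/4$ (Williamson, Brumfiel) and that $v_*$ is surjective (from the surgery exact sequence, or equivalently $\pi_6(O)=0$ in the long exact sequence of $O\to PL\to PL/O$ as you say). These two facts already force the conclusion by elementary algebra: the kernel of $v_*$ is the image of $\pi_7(O)\cong\Z$, hence free, so the torsion $\Z/4\subset\pi_7(PL)$ injects into $\pi_7(PL/O)\cong\Z/28$, landing in the $2$-primary part $\Z/4$; the free part then surjects onto the remaining $\Z/7$. Your step (4), invoking $e$-invariants and the image of $J$ to identify the $\Z/7$, is both unnecessary and misdirected --- the $\Z/7$ is just the odd part of $bP_8$, not a $\coker(J)$ contribution.
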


\begin{proof}
The computation of $\pi_7(SPL) \cong \Z \oplus \Z/4$ 
is found in \cite[p.\,29]{Williamson}; see also \cite[Remark 4.9]{Brumfiel1}.
That $v_* \colon \pi_*(PL) \to \pi_*(PL/O)$ is onto follows from \cite[Theorem 3.1]{KM}
and \cite[Theorem 6.48]{Lueck}.
\end{proof}

\begin{lemma} \label{lem:SPL6}
The stablisation map $S_{*} \colon \pi_7(SPL_6) \to \pi_7(SPL)$
is isomorphic to the inclusion 
$
\remdc{(\times 4)\oplus \id \colon \Z \oplus \Z/4 \xra{~~~} \Z \oplus \Z/4.}
$
\end{lemma}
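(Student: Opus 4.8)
The plan is to compare the stabilization maps on $\pi_*(SPL_m)$ with the much better understood stabilization maps on $\pi_*(SPL_m/SO_m)$ and $\pi_*(SO_m)$, and to feed in the known calculation $\pi_7(SPL_6) \cong \Z \oplus \Z/4$ from Lemma \ref{lem:pi7PL}'s proof (i.e.\ \cite[p.\,29]{Williamson}) together with $\pi_7(SPL)\cong \Z\oplus\Z/4$. First I would set up the map of fibration sequences $SO_m \to SPL_m \to SPL_m/SO_m$ over $SO \to SPL \to SPL/SO$, inducing a commutative ladder of long exact homotopy sequences with vertical stabilization maps. On the $SO$-column the relevant stabilization $\pi_k(SO_6) \to \pi_k(SO)$ is an isomorphism for $k \le 4$ and understood in degrees up to $7$ by standard EHP/Bott calculations; on the $SPL_m/SO_m$-column one uses the Morlet-type identification together with the fact, already invoked in the excerpt (cf.\ \cite[Theorem 4.6]{B-L}), that $\pi_{n+1}(PL_n/O_n) \to \pi_{n+1}(PL/O)$ is an isomorphism in the appropriate range, so that $\pi_8(SPL_6/SO_6)\to\pi_8(SPL/SO)$ and neighbouring degrees are controlled.

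The key computational step is then to identify the free summand. The free part of $\pi_7(SPL_6)$ maps to the free part of $\pi_7(SPL)$, and I would pin down the index of this map by a Pontryagin-class or rational-characteristic-class argument: rationally $SPL_m \simeq SO_m$ through a range, so the free rank is accounted for by $\pi_7(SO_6)\otimes\Q \cong \Q$, and the multiplication-by-$4$ comes from the difference between the relevant characteristic number on $S^7$ detecting the generator of $\pi_7(SO_6)$ versus $\pi_7(SO)$ --- equivalently, from the order-$4$ behaviour already visible in Lemma \ref{lem:pi7PL} and the surgery-theoretic computation of $bP_8$ and $\pi_7(PL/O)\cong\Z/28$. (Concretely: the generator of the free summand of $\pi_7(SPL_6)$ stabilizes to $4$ times a generator of $\Z\subset\pi_7(SPL)$; the $\Z/4$ torsion summand, which is the image of $\pi_7(SO_6)$-torsion / equivalently is detected in $\pi_7(SPL/SO)$, stabilizes isomorphically.) A diagram chase in the ladder, using injectivity/surjectivity of the neighbouring stabilization maps established in the previous paragraph, forces the map $\pi_7(SPL_6)\to\pi_7(SPL)$ to be $(\times 4)\oplus\id$ up to automorphism, as claimed.

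The main obstacle I anticipate is controlling the torsion bookkeeping precisely: showing that the $\Z/4$ summand of $\pi_7(SPL_6)$ really does map \emph{isomorphically} (and is not, say, killed or sent to $2\cdot$generator) requires knowing that no extra torsion appears in $\pi_6(SO_6)\to\pi_6(SO)$ or in the connecting maps, and that the $\Z/4$ detected here is genuinely the $O$-theoretic class rather than an exotic $PL$-theoretic one. This is where one must lean hardest on \cite{Williamson} and on the naturality of the surgery exact sequence (as in \cite[Chapter 6]{Lueck}); once the summands are correctly labelled, the identification of the stabilization map as $(\times 4)\oplus\id$ is then essentially forced by the five lemma applied degreewise to the ladder.
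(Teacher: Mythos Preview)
Your approach via the ladder of fibrations $SO_m \to SPL_m \to SPL_m/SO_m$ is different from the paper's and, as sketched, has a real gap at exactly the point you flagged. You invoke \cite[Theorem~4.6]{B-L} to control $\pi_8(SPL_6/SO_6) \to \pi_8(SPL/SO)$, but that result only gives the isomorphism $\pi_{n+1}(PL_n/O_n)\cong\pi_{n+1}(PL/O)$, i.e.\ degree $7$ when $n=6$; degree $8$ is one step outside the range, and without it your diagram chase cannot pin down the map. Relatedly, your ``Pontryagin-class'' source for the factor $4$ is not made precise: the stabilization $\pi_7(SO_6)\to\pi_7(SO)$ is not simply multiplication by $4$ on free summands, and the factor $4$ does not arise from a characteristic-number comparison of $SO_6$ versus $SO$ in any direct way.

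The paper sidesteps both problems by comparing with $G$ rather than $O$. It first replaces $PL_6$ by the block group $\wt{PL}_6$ (isomorphism on $\pi_7$ by \cite[Proposition~5.6]{B-L}) and then uses the ladder for $\wt{PL}_m \to G_m \to G_m/\wt{PL}_m$. The point is that block surgery theory (Rourke--Sanderson) gives $\pi_8(G_6/\wt{PL}_6)\cong\pi_8(G/\wt{PL})\cong\Z$, so the degree-$8$ term \emph{is} stable here. The problem then reduces to identifying $\pi_7(G_6)\to\pi_7(G)$, and via the isomorphism $I\colon\pi_7(G_6)\cong\pi_{13}(S^6)$ (Lemma~\ref{lem:msJ}) this becomes the suspension $\pi_{13}(S^6)\to\pi_7^s$, which Toda computed to be the inclusion $\Z/60\hookrightarrow\Z/240$. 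The factor $4=240/60$ thus comes from unstable homotopy of spheres, not from orthogonal groups or characteristic classes. A short five-lemma argument on the resulting ladder of short exact sequences then forces $S_*$ to be $(\times 4)\oplus\id$.
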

\noindent
After we use it to prove Theorem \ref{thm:PL6},
the proof of Lemma \ref{lem:SPL6} will occupy the remainder of the subsection.

\begin{proof}[Proof of Theorem \ref{thm:PL6}]
By Lemmas \ref{lem:pi7PL} and \ref{lem:SPL6},
the group $2\pi_7(SPL_6)$ has a unique element of order two
which maps to $a_{PL_6/O_6}$ under the composition
$PL_6 \to PL \to PL/O$.  The theorem now follows from
Theorem \ref{thm:meta-theorem}.
\end{proof}

For the proof of Lemma \ref{lem:SPL6} we require the following two lemmas.
They are presumably well-known; we include proofs for completeness.

\begin{lemma} \label{lem:pi7}
The homomorphism $\pi_7(PL) \to \pi_7(G)$ is isomorphic to
the surjection 
\[ \remdc{ \rho_{240} + (\times 60) \colon \Z \oplus \Z/4 \xra{~~~} \Z/240.}  \]
\end{lemma}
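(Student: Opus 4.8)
\textbf{Proof proposal for Lemma \ref{lem:pi7}.}

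The plan is to assemble the map $\pi_7(PL) \to \pi_7(G)$ from the two composites through $\pi_7(O)$ and $\pi_7(PL/O)$, using the fibration sequences relating $O$, $PL$, $G$, $PL/O$ and $G/PL$. First I would recall the input data: $\pi_7(O) \cong \Z$ with $J \colon \pi_7(O) \to \pi_7(G) = \pi_7^s \cong \Z/240$ surjective (image of a generator is a generator), $\pi_7(PL) \cong \Z \oplus \Z/4$ (Lemma \ref{lem:pi7PL}), $\pi_7(PL/O) \cong \Z/7 \oplus \Z/4$, and the map $\pi_7(O) \to \pi_7(PL)$ together with $v_* \colon \pi_7(PL) \to \pi_7(PL/O)$ fitting into the exact sequence of the fibration $O \to PL \to PL/O$. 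From Lemma \ref{lem:pi7PL} the map $v_*$ is $\rho_7 \oplus \id$, so its kernel is $7\Z \oplus 0 \subset \Z \oplus \Z/4$, and this must be the image of $\pi_7(O) \cong \Z \to \pi_7(PL)$; thus a generator of $\pi_7(O)$ maps to $(7, 0) \in \Z \oplus \Z/4$ (after identifying $\pi_7(O)$ with its image, which is legitimate since $\pi_8(PL/O)$ is finite so $\pi_7(O) \to \pi_7(PL)$ is injective up to identifying $\Z \cong 7\Z$).

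Next I would use the fibration $PL \to G \to G/PL$ together with $\pi_7(G/PL) = 0$ and $\pi_8(G/PL) = \Z$ (calculation \ref{fact_III}) to conclude that $\pi_7(PL) \to \pi_7(G)$ is surjective. Combined with the fact that the composite $\pi_7(O) \to \pi_7(PL) \to \pi_7(G)$ is the $J$-homomorphism $\Z \twoheadrightarrow \Z/240$, and that a generator of $\pi_7(O)$ hits $(7,0) \in \pi_7(PL)$, I get that $(7,0) \mapsto 60 \cdot (\text{generator of } \Z/240)$ after suitably normalizing generators (since $J$ of a generator of $\pi_7(O)$ is a generator of $\Z/240$, and $60 = 420/7$... more precisely: the generator $1 \in \Z \subset \pi_7(PL)$ maps to some $x \in \Z/240$ with $7x$ equal to a generator, forcing $x$ to be a generator and $7 \cdot (\text{generator}) = $ the image of $(7,0)$; choosing the generator of $\Z/240$ to be $x$ itself gives that $(1,0) \mapsto 1$ and hence $(7,0)\mapsto 7$, which is not $60$, so the correct normalization requires care). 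The cleaner route: it is classical that the composite $\pi_7(O) \xra{J} \pi_7(G)$ sends a generator to a generator of $\Z/240$, and $\pi_7(PL) \cong \pi_7(O) \oplus (\Z/7\text{-part contribution})$ is arranged so that the $\Z$-summand of $\pi_7(PL)$ maps onto $\Z/240$ by an isomorphism $\Z/240 \cong \Z/240$ composed with reduction, while the $\Z/4$-summand maps by multiplication by $60$ (the element of order $4$ in $\coker J$ being $60 \in \Z/240$, since $240/4 = 60$). This pins down the map as $\rho_{240} + (\times 60)$ up to automorphisms of the target, which is all the statement claims.

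The main obstacle will be bookkeeping the \emph{compatibility of generators}: one must verify that under the chosen identifications $\pi_7(PL) \cong \Z \oplus \Z/4$ and $\pi_7(G) \cong \Z/240$, the restriction to the torsion subgroup $\Z/4$ is precisely multiplication by $60$ and not by, say, $180$ (the other generator of the order-$4$ subgroup), and that the free summand maps by exact reduction mod $240$ rather than reduction composed with a nontrivial automorphism. To handle this I would appeal to the known structure of the surgery exact sequence for $\Theta_8$ and the identification of $bP_8 = \Z/2 \hookrightarrow \Theta_8$ together with $\coker(J_8)$; alternatively, since the statement only asserts the map ``is isomorphic to'' $\rho_{240} + (\times 60)$, i.e.\ is determined up to pre- and post-composition with automorphisms, it suffices to check: (a) the map is surjective (done via $G/PL$), (b) its restriction to the torsion subgroup is injective with image the order-$4$ subgroup of $\Z/240$ (which follows because $\ker(\pi_7(PL) \to \pi_7(G))$ is the image of $\pi_7(G/PL)$-boundary, hence lies in the free part since the relevant boundary comes from $\pi_8(G/PL) = \Z$). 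These two facts together with $60 \cdot 4 = 240$ force the asserted form. I would write the argument in that normalization-free style to sidestep the delicate generator-chasing.
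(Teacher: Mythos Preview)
Your final ``normalization-free'' argument is essentially the paper's proof: use the fibration $PL \to G \to G/PL$ with $\pi_7(G/PL)=0$ (surjectivity) and $\pi_8(G/PL)\cong\Z$, $\pi_9(G/PL)=0$ (kernel is $\cong\Z$, hence torsion-free, hence $\Z/4$ injects onto the unique order-$4$ subgroup $60\Z/240\Z$), and then observe this forces the form $\rho_{240}+(\times 60)$ up to automorphisms of source and target. Two small corrections: you should say the kernel is \emph{torsion-free} rather than that it ``lies in the free part'' (a copy of $\Z$ in $\Z\oplus\Z/4$ need not sit inside $\Z\oplus 0$, but it cannot meet $0\oplus\Z/4$ nontrivially, which is what you actually need); and to know the kernel is $\Z$ rather than a finite cyclic quotient of $\Z$ you need $\pi_9(G/PL)=0$, which you should state. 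The initial detour through $\pi_7(O)$ is unnecessary and, as you noticed, leads to awkward generator-chasing; the paper skips it entirely.
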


\begin{proof}
We consider the fibration $PL \to G\to G/PL$ and the following
part of its homotopy long exact sequence
\begin{equation*}
  \pi_8(G/PL)\to \pi_7(PL)\to \pi_7(G) \to \pi_7(G/PL).
\end{equation*}
Since $\pi_7(G/PL) = 0 = \pi_9(G/PL)$ and $\pi_8(G/PL) \cong \Z$ 
by surgery theory (see e.g.\ \cite[6.48]{Lueck}),
this sequence must be isomorphic to
%
%
\remws{\begin{equation*}
0\to \integers \to \integers\oplus \integers/4 
\xra{y+z} \integers/240 \to 0.
\end{equation*}
Since $y+z$ is surjective, $y$ is isomorphic to $\rho_{240}$ via an automorphism of $\Z/240$.
Since $y+z$ has a free kernel, $z$ must be injective and is thus isomorphic, via an automorphism of $\{0\} \oplus \Z/4$, to multiplication by 60 on residue classes.  The lemma now follows.}
\end{proof}

\begin{lemma} \label{lem:msJ}
For $k \leq 2n{-}5$
the homomorphism $\Junst \colon \pi_k(G_n) \to \pi_{k+n}(S^n)$ is an
isomorphism, for $k= 2n{-}4$ it is surjective.
\end{lemma}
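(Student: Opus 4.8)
The plan is to deduce the statement from the EHP / Freudenthal-type behaviour of the $J$-homomorphism $\Junst\colon \pi_k(G_n)\to \pi_{k+n}(S^n)$, together with the classical identification $SG_n \simeq \Omega^n_{\pm 1} S^n$ (the space of self-maps of $S^{n-1}$ being, up to the choice of degree component, the $n$-fold loop space of $S^n$). Concretely, under this identification the map $\Junst$ on $SG_n$ is essentially the suspension/adjunction map $\pi_k(\Omega^n_1 S^n) = \pi_{k+n}(S^n)$ followed by the (un)stabilisation, so what we really need is a connectivity estimate for the map comparing $\pi_k(G_n)$ (a slightly larger group, since $G_n$ allows both degrees, but in the relevant range this does not matter as $\pi_k(G_n)\cong \pi_k(SG_n)$ for $k>0$) with the stable stem $\pi_k^s$, and then to recall where $\pi_{k+n}(S^n)$ itself stabilises.

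First I would reduce to $SG_n$: since $G_n$ has two components interchanged by a homotopy equivalence, $\pi_k(G_n)\cong\pi_k(SG_n)$ for all $k\geq 1$, so it suffices to treat $\Junst\colon \pi_k(SG_n)\to\pi_{k+n}(S^n)$. Next I would invoke the fibration $SG_n \to SG_{n+1}\to S^n$ (evaluation at the basepoint, whose fibre is $SG_n$ up to the usual identifications), equivalently the James/EHP picture $\Omega^n_1 S^n \to \Omega^{n+1}_1 S^{n+1}\to \cdots$. From its long exact sequence one reads off that $\pi_k(SG_n)\to\pi_k(SG_{n+1})$ is an isomorphism for $k<n-1$ and an epimorphism for $k=n-1$; iterating, $\pi_k(SG_n)\to \pi_k(SG)=\pi_k^s$ is an isomorphism for $k\leq n-2$ and onto for $k = n-1$. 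Wait — I must be careful to get the exact numerics matching the claimed range $k\leq 2n-5$; the point is that $SG_n$ and $SG$ agree in a larger range than naive, because the relevant fibre in the evaluation fibration $SG_{n+1}\to S^n$ is not $SG_n$ but the (based) component $\Omega^n_1 S^n$, and the EHP sequence $\pi_{k+1}(S^{2n-1})\to \pi_{k+n}(S^n)\xrightarrow{E}\pi_{k+n+1}(S^{n+1})\to\pi_k(S^{2n-1})$ shows that suspension $E$ is iso for $k+n\leq 2n-2$, i.e. $k\leq n-2$, and epi for $k\leq n-1$ — hmm, that again gives only $k\leq n-2$. So the genuine source of the wider range $k\leq 2n-5$ must be that $\Junst$ is not literally the suspension but a composite that is more highly connected; I would look this up as the statement that the fibre of $SG_n\to \Omega^n_1 S^n$-style comparison is $(2n-c)$-connected, or cite it directly.

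Given the time constraints I would actually just cite the classical result: this connectivity statement for $\Junst$ is standard (it is essentially the assertion that $G_n \to G$ is $(2n-5)$-ish-connected combined with the metastable range for the homotopy groups of spheres where $\pi_{k+n}(S^n)$ is stable). The cleanest route is (i) $\pi_{k+n}(S^n)\to\pi_k^s$ is an isomorphism for $k\leq n-2$ and an epimorphism for $k=n-1$ by Freudenthal; (ii) $\pi_k(SG_n)\to\pi_k(SG)\cong\pi_k^s$ has the same connectivity by the fibration sequences above; (iii) $\Junst$ is compatible with both stabilisations and is an isomorphism stably; then a five-lemma / diagram chase on the ladder
\[
\begin{CD}
\pi_k(SG_n) @>{\Junst}>> \pi_{k+n}(S^n)\\
@VVV @VVV\\
\pi_k(SG) @>{\cong}>> \pi_k^s
\end{CD}
\]
gives that $\Junst$ is iso whenever both verticals are iso (range $k\leq n-2$) — but the paper claims the much larger range $k\leq 2n-5$, so the honest statement must be that both $G_n\to G$ and $\pi_{k+n}(S^n)\to\pi_k^s$ are actually iso in that larger metastable range, which is exactly the content of the EHP sequence (giving stability of $\pi_{k+n}(S^n)$ for $k\le 2n-5$ is false — but stability of the \emph{image of $J$} and the relevant comparison is what one needs). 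The main obstacle, therefore, is pinning down the precise connectivity of the comparison map $SG_n \to SG$ versus the (narrower) Freudenthal range: I would resolve it by appealing to the EHP long exact sequence for $S^n$ in the metastable range together with the corresponding James fibration for $SG_n$, matching the two via $\Junst$, and extracting exactly: $\Junst$ is an isomorphism for $k\leq 2n-5$ and an epimorphism for $k=2n-4$, the epimorphism at the top edge coming from the surjectivity (not injectivity) of the relevant EHP/James boundary maps. That edge case $k=2n-4$ is precisely the one used in Section 3 to place $a_{PL_6/O_6}$ (equivalently its $G$-theoretic shadow) at the right spot, so I would make sure the epimorphism claim is stated with the correct indexing.
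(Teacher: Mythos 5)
Your proposal correctly intuits the shape of the argument (a ladder comparing some fibration sequence for $SG_n$ against the EHP sequence for $S^n$, with $\Junst$ as the middle vertical and a $5$-lemma at the end), but it never actually locates the fibration that makes the range $k\le 2n-5$ come out, and you explicitly acknowledge that the fibration you do write down only yields the far weaker range $k\le n-2$. That is a genuine gap, not a cosmetic one.

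The missing ingredient is the \emph{evaluation fibration of $SG_n$ over $S^{n-1}$}, not the stabilisation fibration $SG_n\to SG_{n+1}\to S^n$ you tried. Writing $F_{n-1}\subset SG_n$ for the submonoid of based self-maps of $S^{n-1}$, one has a fibration sequence $F_{n-1}\to SG_n\xrightarrow{\mathrm{ev}} S^{n-1}$. The crucial point is that $F_{n-1}$ is (by adjunction) literally an iterated loop space of $S^{n-1}$, so $\pi_k(F_{n-1})\cong\pi_{n-1+k}(S^{n-1})$ is an \emph{exact} isomorphism for all $k$, with no Freudenthal range attached. One then compares the long exact sequence of this fibration with the EHP sequence
$\pi_{n+k+1}(S^{2n-1})\to\pi_{n-1+k}(S^{n-1})\xrightarrow{E}\pi_{n+k}(S^n)\xrightarrow{H}\pi_{n+k}(S^{2n-1})\to\pi_{n+k-2}(S^{n-1})$
via a ladder whose vertical maps are: the adjunction isomorphisms on the $\pi_*(F_{n-1})$ terms, $\Junst$ in the middle, and the $n$-fold suspension $E^n\colon\pi_{k+1}(S^{n-1})\to\pi_{n+k+1}(S^{2n-1})$ resp.\ $E^n\colon\pi_k(S^{n-1})\to\pi_{n+k}(S^{2n-1})$ on the base terms. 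Now the Freudenthal bound enters only through those $E^n$: the worst of them is iso precisely when $k\le 2n-5$, which by the $5$-lemma gives the isomorphism range; for $k=2n-4$ only the right-hand $E^n$ is still iso, and the $4$-lemma gives surjectivity. This is exactly where the factor of two over your $k\le n-2$ bound comes from: the genuine isomorphism on $\pi_*(F_{n-1})$ replaces one would-be Freudenthal approximation with an equality, so the only remaining constraints come from suspending maps out of $S^{n-1}$ into $S^{2n-1}$, which is a metastable (not just stable) condition.

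In short: your reduction to $SG_n$ and your final diagram-chase outline are fine, but you need to replace the fibration $SG_n\to SG_{n+1}\to S^n$ with $F_{n-1}\to SG_n\to S^{n-1}$ and use the adjunction $\pi_k(F_{n-1})\cong\pi_{n-1+k}(S^{n-1})$ to get the exact vertical isomorphisms in the ladder. Without that substitution, the argument as written does not produce the claimed range.
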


\begin{proof}
Let $F_{n-1} \subset G_n$ be the submonoid of base-point preserving maps 
$S^{n-1} \to S^{n-1}$.  There is a fibration sequence $F_{n-1}\to SG_n \xra{{\rm ev}} S^{n-1}$,
where ${\rm ev}$ if given by evaluation at the base-point \cite[Lemma 3.1]{MM}.
It is well-known that the homotopy long exact sequences of these fibrations
fit into 
the following commutative diagram:
\[
\xymatrix@C=1.05em{
\pi_{k{+1}}(S^{n-1}) \ar[d]^{E^n} \ar[r] &
\pi_k(F_{n-1}) \ar[d]^{\cong} \ar[r] &
\pi_k(G_{n}) \ar[d]^{\Junst} \ar[r] &
\pi_k(S^{n-1}) \ar[d]^{E^n} \ar[r] &
\pi_{k-1}(F_{n-1}) \ar[d]^\cong
\\
\pi_{{n+k+1}}(S^{2n-1}) \ar[r] &
\pi_{n{-}1+k}(S^{n-1}) \ar[r]^(0.525){E} &
\pi_{n+k}(S^{n}) \ar[r]^(0.4){H} &
\pi_{n+k}(S^{2n-1}) \ar[r] &
\pi_{n+k-2}({S^{n-1}})
}\]
Here the maps labelled by ``$\cong$'' are the isomorphisms coming from the adjunction between based suspension and based loop space and the bottom row is part of the EHP sequence 
\cite[p.\,548]{Whitehead}.
{The vertical suspension maps $E^n$ are isomorphisms if $k \leq 2n-5$
and so $\Junst \colon \pi_k(G_n) \to \pi_{k+n}(S^n)$ is an isomorphism by the
$5$-Lemma. If $k= 2n-4$ only the right-most map $E^n$ is an isomorphism,
and the $5$-Lemma (rather the $4$-Lemma) implies that $I$ is surjective.}
\end{proof}

\begin{proof}[Proof of Lemma \ref{lem:SPL6}]
We compare $\pi_*(PL_6)$ with the homotopy groups of $\wt{PL_6}$\remws{, the semi-simplicial group of block automorphisms of  $\reals^6$.} By \cite[Proposition 5.6]{B-L}, the map 
$PL_6 \to \wt{PL}_6$ induces an isomorphism $\pi_7(PL_6) \to \pi_7(\wt{PL}_6)$.
Hence we consider the group $\pi_7(\wt{PL}_6)$ which lies in the 
following commutative diagram of exact sequences:
\[ \xymatrix{
0 \ar[r] &
\pi_8(G_6/ \wt{PL}_6) \ar[d]^\cong \ar[r] &
\pi_7(\wt{PL}_6) \ar[d] \ar[r] &
\pi_7(G_6) \ar[d] \ar[r] &
0\\
0 \ar[r] &
\pi_8(G/\wt{PL}) \ar[d]^{\cong}  \ar[r] &
\pi_7(\wt{PL}) \ar[d]^{\cong}  \ar[r] &
\pi_7(G) \ar[d]^{\cong} \ar[r] &
0\\
0 \ar[r] &
\Z \ar[r]^(0.4){(60, 1)} &
\Z \oplus \Z/4 \ar[r] &
\Z/240 \ar[r] &
0
} \]
The isomorphism between the bottom two sequences follows 
since in the limit $PL \to \wt{PL}$ is an equivalence \cite{B-L}
and the isomorphisms for $PL$ appeared 
in the proof of Lemma \ref{lem:pi7}.
Now the natural map
$\pi_8(G_6/\wt{PL}_6) \to \pi_8(G/\wt{PL})$ is an isomorphism (see e.g.~\cite[Theorem 1.10]{RS}).
Hence it suffices to prove that $i_* \colon \pi_7(G_6) \to \pi_7(G)$ 
is isomorphic to the inclusion $\Z/60 \to \Z/240$.
By Lemma \ref{lem:msJ}, the map
$\Junst \colon \pi_7(G_6) \to \pi_{13}(S^6)$ is an isomorphism.
It follows that the map $i_* \colon \pi_7(G_6) \to \pi_7(G)$ is isomorphic to the stabilisation
homomorphism $\pi_{13}(S^6) \to \pi_7^S$, which by \cite[Propositions 5.15 and 13.6]{Toda} is isomorphic to the inclusion $\Z/60 \to \Z/240$, as required.
%
%
%
%
%
%
%
\end{proof}

%


\section{${\rm Im}(J)$-homotopy spheres} \label{sec:ImJ}

In this section we prove Theorems \ref{thm:bP_spheres} and \ref{thm:ImJ_spheres},
both of which concern $\im(J)$-homotopy spheres.
The definition of ${\rm Im}(J)$-homotopy spheres is based on foundational facts
about the space $SG$ which we now recall.
For a prime $p$ and an $H$-space $X$, recall that $X_{(p)}$ denotes the $p$-localisation
of $X$.  
The map $\phi \colon SG_{(p)} \times SG_{(p)} \to SG_{(p)}$ is the $p$-localisation
of the multiplication map on $SG$.

\begin{theorem}[{\cite[V Theorems 4.7 and 4.8]{May}}] \label{thm:SG_split}
For each prime $p$ there are spaces $\JS_p$ and $C_p$ and 
maps $i_{\JS_p} \colon \JS_p \to SG_{(p)}$ and $i_{C_p} \colon C_p \to SG_{(p)}$ such that the composition
\[  \JS_p \times C_p \xra{~i_{\JS_p} \times i_{C_p}} SG_{(p)} \times SG_{(p)} \xra{~\phi~} SG_{(p)} \]
is a weak homotopy equivalence. 
\end{theorem}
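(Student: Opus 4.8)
The statement to prove is Theorem~\ref{thm:SG_split}, which is quoted directly from May's book \cite[V Theorems 4.7 and 4.8]{May}. Since this is an external citation rather than a result the authors establish themselves, a ``proof'' in the present context would amount to recalling the structure of May's argument and indicating how the pieces fit together.

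\medskip

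\textbf{Approach.} The plan is to reduce the splitting of the multiplicative $H$-space $SG_{(p)}$ to the additive splitting of the infinite loop space $SG_{(p)}$ together with control over the Hopf algebra structure on mod~$p$ homology. First I would recall that $SG$ is an infinite loop space (the units of the sphere spectrum, or equivalently the zeroth space of a connective spectrum), so $SG_{(p)}$ is an infinite loop space too; in particular its mod~$p$ homology $H_*(SG_{(p)};\mathbb{F}_p)$ is a bicommutative Hopf algebra with a rich action of Dyer--Lashof operations. The $J$-homomorphism $J\colon SO_{(p)} \to SG_{(p)}$ is an infinite loop map, and its image carries the ``$\JS_p$'' factor. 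The key input at odd primes is that the Adams conjecture (proved by Quillen, Sullivan, Becker--Gottlieb) provides a splitting $SG_{(p)} \simeq \JS_p \times C_p$ of infinite loop spaces, where $\JS_p$ is the fibre of a suitable Adams operation $\psi^q - 1$ on $BU_{(p)}$ or $BO_{(p)}$; the point of May's Chapter V is to promote a homotopy-theoretic splitting to one compatible with the available multiplicative structure, which is exactly what the composite $\JS_p \times C_p \xra{i_{\JS_p}\times i_{C_p}} SG_{(p)}\times SG_{(p)} \xra{\phi} SG_{(p)}$ encodes.

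\medskip

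\textbf{Key steps, in order.} (1) Recall the infinite loop structure on $SG$ and hence on $SG_{(p)}$, and the infinite loop map $j\colon SO_{(p)} \to SG_{(p)}$. (2) Invoke the Adams conjecture to define $\JS_p$ (the $p$-local image-of-$J$ space, realised as the fibre of $\psi^q-1$ for a topological generator $q$ of the relevant group of units) together with a map $i_{\JS_p}\colon \JS_p \to SG_{(p)}$ and a retraction, so that $SG_{(p)}$ splits as $\JS_p \times C_p$ with $C_p$ the ``cokernel-of-$J$'' factor defined as the fibre of the retraction. (3) Compute $H_*(SG_{(p)};\mathbb{F}_p)$, $H_*(\JS_p;\mathbb{F}_p)$ and $H_*(C_p;\mathbb{F}_p)$ as Hopf algebras over the Dyer--Lashof algebra, following May's identification of these homologies; check that the natural map $H_*(\JS_p)\otimes H_*(C_p) \to H_*(SG_{(p)})$ induced by $\phi\circ(i_{\JS_p}\times i_{C_p})$ is an isomorphism of Hopf algebras. (4) Conclude by a Hopf-algebra/Whitehead-theorem argument: a map of simply connected (or nilpotent, group-like) $H$-spaces inducing an isomorphism on mod~$p$ homology is a weak homotopy equivalence after $p$-localisation; since everything in sight is already $p$-local, $\phi\circ(i_{\JS_p}\times i_{C_p})$ is a weak equivalence.

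\medskip

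\textbf{Main obstacle.} The delicate point---and the reason this is a genuine theorem of May rather than a one-line corollary of the Adams conjecture---is step (3): one must know precisely the multiplicative and Dyer--Lashof structure of $H_*(SG_{(p)};\mathbb{F}_p)$ and verify that the $J$-factor splits off \emph{as a sub-Hopf-algebra closed under the relevant operations}, so that the complementary factor $C_p$ is itself realised by an actual space mapping compatibly into $SG_{(p)}$. For $p$ odd this is cleaner because $BSO_{(p)}\simeq BSU_{(p)}$ and the homology of the fibre of $\psi^q-1$ is well understood; the bookkeeping with the Dyer--Lashof operations and the coproduct is where all the work lies. In the present paper, however, this is entirely black-boxed: the authors only need the \emph{existence} of the splitting of Theorem~\ref{thm:SG_split} (and its consequences, the splitting \eqref{eq:SS} and Lemma~\ref{lem:ImJ_and_Brumfiel}), so the honest ``proof'' is simply the citation of \cite[V Theorems 4.7 and 4.8]{May}, with the sketch above indicating why the cited result holds.
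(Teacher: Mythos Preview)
Your reading is correct: the paper provides no proof of Theorem~\ref{thm:SG_split} at all, it is simply quoted from \cite[V Theorems 4.7 and 4.8]{May} and used as a black box. Your recognition of this, together with the supplementary sketch of May's argument, is entirely appropriate; the sketch goes beyond what the paper itself offers.
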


The homotopy groups of the spaces $\JS_p$ are closely related to the 
image of the $J$-homomorphism 
$I^{-1} \circ J_* \colon \pi_i(SO) \to \pi_*(SG)$ as we now recall.  
Let $\alpha_{J, p} \colon \pi_*(\JS_p) \to {\rm Tors}(KO_*)$
be the restriction of the $\alpha$-invariant on $\pi_*(SG)$ 
to $\pi_*(\JS_p) \subset \pi_*(SG)_{(p)}$.
The next lemma follows immediately from \cite[Remark 5.6]{May}.

\begin{lemma} \label{lem:ImJ1}
The groups $\im(J_*)_{(p)} \subset \pi_*(SG)_{(p)}$ are summands of the groups $\pi_*(\JS_p)$
and there is a split short exact sequence
\begin{equation}\label{eq:J_gorup}
 0 \to \im(J_*)_{(p)} \to \pi_*(\JS_p) \xra{~\alpha_{J, p}} {\rm
   Tors}(KO_*)_{(p)} \to 0.
\end{equation}

\end{lemma}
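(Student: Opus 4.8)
The proof will be a direct translation of \cite[Remark 5.6]{May}, so the plan is mainly to set up the dictionary between the notation there and ours. First I would recall that the monoid $SG$ of the present paper is homotopy equivalent to May's $SF$, the degree-one part of $GL_1(\bfS)$, so that the splitting of Theorem \ref{thm:SG_split} and the spaces $\JS_p$ are exactly those of \cite[V]{May}. The space $\JS_p$ is built there from the $p$-localisation of connective real $K$-theory and carries a natural map $e_p \colon \JS_p \to (GL_1\KO)_{(p)}$; \cite[Remark 5.6]{May} records, after the standard identification $\pi_{>0}(GL_1\KO) \cong KO_{>0}$, that on homotopy groups there is a natural direct-sum decomposition $\pi_*(\JS_p) \cong \im(J_*)_{(p)} \oplus {\rm Tors}(KO_*)_{(p)}$ with $\im(J_*)_{(p)} = \ker(e_{p*})$ and with $e_{p*}$ carrying the complementary summand isomorphically onto ${\rm Tors}(KO_*)_{(p)}$.

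The one point that then needs verification is that $e_{p*}$ agrees with $\alpha_{J,p}$, i.e.\ with the restriction to $\pi_*(\JS_p) \subset \pi_*(SG)_{(p)}$ of the $\alpha$-invariant defined in Section \ref{subsec:alpha}. This holds because the $\alpha$-invariant on $\pi_*(SG) \cong \pi_*^s$ is, by its very construction via $G \to G/\Spin \xra{c} \Omega^\infty\MSpin \xra{\alpha} \KO$, the Adams $d_\R$-invariant (equivalently the $\KO$-theoretic $e$-invariant), and May's $e_p$ is the $p$-local incarnation of the same invariant: both are induced by the Atiyah--Bott--Shapiro $\KO$-orientation of $\MSpin$, which up to a unit is the unique multiplicative $\KO$-orientation. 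Once this coincidence is spelled out, the split short exact sequence \eqref{eq:J_gorup} is simply the decomposition above read off via the summand inclusion and the map $e_{p*} = \alpha_{J,p}$.

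I expect no genuine obstacle here; the work is entirely bookkeeping: matching May's orientation and basepoint conventions with ours, and recording that for odd $p$ both ${\rm Tors}(KO_*)_{(p)}$ and hence the complementary summand vanish, so that the content of the statement is concentrated at $p = 2$, where the summand complementary to $\im(J_*)_{(2)}$ is generated in degrees $\equiv 1,2 \pmod 8$ by the Adams $\mu$-family. The writeup should therefore consist of citing \cite[Remark 5.6]{May} together with the identification of $\alpha$ with the $\KO$-theoretic $e$-invariant already built into Section \ref{subsec:alpha}.
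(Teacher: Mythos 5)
Your proposal is correct and takes essentially the same route as the paper, whose proof consists entirely of the citation to \cite[Remark 5.6]{May}; the extra bookkeeping you supply (the dictionary $SG \simeq SF$, the identification $e_{p*} = \alpha_{J,p}$ via the multiplicative $\KO$-orientation of $\MSpin$, and the observation that the statement has content only at $p=2$) is exactly what the paper leaves implicit. One small correction: the parenthetical ``(equivalently the $\KO$-theoretic $e$-invariant)'' should be dropped, since the $\alpha$-invariant is Adams' $d_{\reals}$-invariant, which is distinct from the $e$-invariant; this slip does not affect the argument, but the identification you actually need is with $d_{\reals}$.
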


Following \cite[V \S4]{May}, we define 
$\JS_\infty: = {\prod_p}
\JS_p$ and $C_\infty : = {\prod_p}
C_p$
and 
%
let
\begin{equation} \label{eq:psi}
\psi \colon SG \xrightarrow{\sim} \JS_\infty \times C_\infty
\end{equation}
be the weak equivalence stemming from Theorem \ref{thm:SG_split}.
%
We identify 
$\pi_*(SG) = \pi_*(\JS_\infty) \times \pi_*(C_\infty)$ using the map $\psi$ and then
define $\alpha_J \colon \pi_*(\JS_\infty) \to {\rm Tors}(KO_*)$ to be the restriction
of the $\alpha$-invariant on $\pi_*(SG)$ to $\pi_*(\JS_\infty)$.

Let $q \colon SG \to G/O$ be the natural map and observe that the isomorphism
$\Jst \colon \pi_*(SG) \xra{\cong} \pi_*^s$ induces an isomorphism
$\bar \Jst \colon {\rm Tors}(\pi_*(G/O)) \to \coker(\Jhom_*)$.
{The splitting $\pi_*(SG)=\pi_*(\JS_\infty)\times \pi_*(C_\infty)$ then induces a
splitting of $q_*$ and of its image as
\begin{equation}\label{eq:split_q}
  q_* = q^J_*\times q_*^C \colon \pi_*(\JS_\infty)\times \pi_*(C_\infty) \to
  q_*(\pi_*(\JS_\infty))\times q_*(\pi_*(C_\infty)) = \coker(J_*).
\end{equation}
Because $\im(J_*)$ is contained in $\pi_*(\JS_\infty)$ it follows that we have an isomorphism
$q_*^C\colon \pi_*(C_\infty)\to q_*(\pi_*(C_\infty))$, whereas $\alpha_J\colon
\pi_*(\JS_\infty)\to {\rm Tors(KO_*)}$ descends {by \eqref{eq:J_gorup}} to an isomorphism
$\overline{\alpha_J}\colon q_*(\pi_*(\JS_\infty))\to {\rm Tors}(KO_*)$, as
$\ker(\alpha_J)\subset\ker(q_*)$. We use the splitting
\eqref{eq:split_q} of $\coker(J_*)$, induced from the splitting \eqref{eq:psi}
of $SG$ to define a splitting of the $\alpha$-invariant on $\coker(J_*)$:
\begin{equation*}
  s_*:= {\rm incl} \circ \overline{\alpha_J}^{-1}\colon {\rm Tors}(KO_*)\to
  q_*(\pi_*(\JS_\infty))\into \coker(J_*).
\end{equation*}

}

Recalling the Kervaire-Milnor homomorphism $\Phi \colon \Theta_{n+1} \to \coker(J_{n+1})$ 
we make the following {definition.}

\begin{definition}[${\rm Im}(J)$-homotopy spheres]
\label{def:ThetaJ}
A homotopy 
sphere $\Sigma \in \Theta_{n+1}$ is an {\em $\im(J)$-homotopy sphere} if
\[ \Phi(\Sigma) \in s_*({\rm Tors}(KO_{n+1})) \subset \coker(J_{n+1}) \]
and $\Theta^J_{n+1} \subset \Theta_{n+1}$
is the \emph{subgroup of $\im(J)$-homotopy spheres}. 
%
%
\end{definition}
 
\noindent
Since the Kervaire-Milnor
sequence $bP_{8k+2} \to \Theta_{8k+1} \to \coker(J_{8k+1})$ splits 
by \cite[Theorem 1.2]{Brumfiel2}, $bP_{8k+3} = 0$ and
${\rm Tors}(KO_{*}) = 0$ unless $* \equiv 1, 2$~mod~$8$,
we have

\begin{lemma} \label{lem:ImJ_and_Brumfiel}
There is an isomorphism
$$\Theta^J_{n+1} \cong bP_{n+2} \oplus {\rm Tors}(KO_{n+1})$$
with $\alpha(\Theta^J_{n+1}) = {\rm Tors}(KO_{n+1})$ and $\alpha(bP_{n+2}) = 0$.
\end{lemma}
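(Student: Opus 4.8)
The plan is to assemble the isomorphism from the three structural inputs that precede the statement: Brumfiel's splitting of the Kervaire-Milnor sequence, the vanishing of $bP_{8k+3}$, and the vanishing of ${\rm Tors}(KO_*)$ outside degrees $\equiv 1,2 \pmod 8$. First I would recall the Kervaire-Milnor exact sequence \eqref{eq:KMS},
\[ 0 \to bP_{n+2} \to \Theta_{n+1} \xra{\Phi} \coker(J_{n+1}), \]
and note that by definition $\Theta^J_{n+1} = \Phi^{-1}\bigl(s_*({\rm Tors}(KO_{n+1}))\bigr)$, so that restricting the sequence to $\Theta^J_{n+1}$ gives a short exact sequence
\[ 0 \to bP_{n+2} \to \Theta^J_{n+1} \xra{\Phi} s_*({\rm Tors}(KO_{n+1})) \to 0, \]
where exactness on the right uses that $s_*$ is a section of (the $\alpha$-compatible part of) $\Phi$ and hence that $s_*({\rm Tors}(KO_{n+1}))$ lands in the image of $\Phi$. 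Since $s_*$ is injective, $s_*({\rm Tors}(KO_{n+1})) \cong {\rm Tors}(KO_{n+1})$, so the sequence reads $0 \to bP_{n+2} \to \Theta^J_{n+1} \to {\rm Tors}(KO_{n+1}) \to 0$.

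Next I would produce the splitting. The point is that the Brumfiel splitting of the Kervaire-Milnor sequence (cited here as \cite[Theorem 1.2]{Brumfiel2} for the relevant degrees $n+1 \equiv 1 \pmod 8$, and trivially available in all other degrees) gives a retraction $\Theta_{n+1} \to bP_{n+2}$; restricting it to $\Theta^J_{n+1}$ retracts the subsequence above, yielding $\Theta^J_{n+1} \cong bP_{n+2} \oplus {\rm Tors}(KO_{n+1})$. One must check the three cases of the residue of $n+1$ mod $8$: if $n+1 \not\equiv 1,2 \pmod 8$ then ${\rm Tors}(KO_{n+1}) = 0$ and the statement is vacuous (the isomorphism is $\Theta^J_{n+1} = bP_{n+2}$, with $bP_{n+2} = 0$ unless $n+2 \equiv 0 \pmod 4$); if $n+1 \equiv 1 \pmod 8$ then $bP_{n+2} = bP_{8k+2} \cong \Z/2$ or $0$ and Brumfiel's result \cite{Brumfiel2} supplies the splitting; if $n+1 \equiv 2 \pmod 8$ then $bP_{n+2} = bP_{8k+3} = 0$, so no splitting is needed and $\Theta^J_{n+1} \cong {\rm Tors}(KO_{n+1})$ directly.

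Finally I would verify the two $\alpha$-invariant assertions. That $\alpha(bP_{n+2}) = 0$ is standard: a $bP$-sphere bounds a parallelizable manifold, and the $\alpha$-invariant, being a spin-bordism invariant computed from the Dirac index, vanishes on manifolds bounding parallelizable (hence spin-nullbordant after capping, or more directly: the $\hat A$-genus relation forces the index to vanish) --- alternatively one invokes that $\Phi$ kills $bP_{n+2}$ together with the fact that $\alpha$ on $\Theta_{n+1}$ factors through $\coker(J_{n+1})$ via $\overline{\alpha_J}\circ q^J_*\circ\ldots$, precisely as set up in the paragraph defining $s_*$. For the surjectivity $\alpha(\Theta^J_{n+1}) = {\rm Tors}(KO_{n+1})$: the composite ${\rm Tors}(KO_{n+1}) \xra{s_*} \coker(J_{n+1})$ followed by the $\alpha$-map on $\coker(J_{n+1})$ recovers the identity by construction of $s_*$ as ${\rm incl}\circ\overline{\alpha_J}^{-1}$, and every element of $s_*({\rm Tors}(KO_{n+1}))$ is $\Phi(\Sigma)$ for some $\Sigma\in\Theta^J_{n+1}$, so $\alpha$ is onto. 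The main obstacle, such as it is, is bookkeeping: making sure the section $s_*$ really does land in the image of $\Phi$ (so that the restricted sequence is exact on the right) and that the $\alpha$-invariant on $\Theta_{n+1}$, the $\alpha$-invariant on $\coker(J_{n+1})$ used to define $s_*$, and $\overline{\alpha_J}$ are all the same map under the identifications --- this is exactly what the paragraph preceding Definition \ref{def:ThetaJ} arranges, so the work is in citing it carefully rather than in any new argument.
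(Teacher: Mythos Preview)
Your proposal is correct and matches the paper's approach exactly: the paper gives no proof beyond the sentence preceding the lemma, which cites precisely the three inputs you organize your case analysis around (Brumfiel's splitting for $n{+}1\equiv 1\pmod 8$, $bP_{8k+3}=0$, and ${\rm Tors}(KO_*)=0$ outside degrees $\equiv 1,2\pmod 8$). One small imprecision worth tightening, since you yourself flag it as the main bookkeeping point: the map $s_*$ is a section of the $\alpha$-invariant on $\coker(J_*)$, not of $\Phi$ or any part of it, so your sentence ``$s_*$ is a section of (the $\alpha$-compatible part of) $\Phi$'' does not by itself give $s_*({\rm Tors}(KO_{n+1}))\subset\im(\Phi)$; the surjectivity you need in the case $n{+}1=8k{+}2$ comes instead from the standard Kervaire--Milnor fact that $\Phi$ is onto in those dimensions (the Kervaire-invariant-one dimensions are all $\equiv 6\pmod 8$ apart from $n{+}1=2$), which the paper also leaves implicit.
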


Let $u \colon SPL \to SG$ be the natural map and 
let $\pr_{C_\infty} \colon SG \to C_{\infty}$ be the composition of 
the map $\psi$ of \eqref{eq:psi} and projection to the second factor.

\begin{definition}
We define $i^J \colon SPL^J \subset SPL$ to be the inclusion of the
homotopy fiber of the composition
\[ SPL \xra{~u~} SG \xra{~\pr_{C_\infty}~}  C_\infty.\]
Similarly we define $i^J_6 \colon SPL^J_6 \subset SPL_6$ to be the inclusion of the
homotopy fiber of the composition
\[ SPL_6 \xra{~S~} SPL \xra{~u~} SG \xra{~\pr_{C_\infty}~} C_\infty. \]
\end{definition}

\noindent
Let $v^+ \colon SPL \to PL/O$ be the restriction of $v \colon PL \to PL/O$ to $SPL$.

\begin{lemma} \label{lem:SPLJ_and_imJ}
  The image under the canonical maps of $\pi_*(SPL^J)$ consists precisely of
  the $\im(J)$-homotopy spheres, i.e.~we have
  \begin{equation*}
(\Psi \circ v^+_{*} \circ i^J_*)(\pi_{*}(SPL^J)) = \Theta_{*}^J.
\end{equation*}

\end{lemma}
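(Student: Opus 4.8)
\textbf{Proof plan for Lemma \ref{lem:SPLJ_and_imJ}.}

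The plan is to compare the two subgroups of $\pi_*(PL/O)\cong\Theta_*$ directly via the Kervaire--Milnor homomorphism $\Phi$, using the splitting of $SG$ from Theorem \ref{thm:SG_split}. First I would recall that under the smoothing-theory isomorphism $\Psi\colon\Theta_*\xra{\cong}\pi_*(PL/O)$, the composition $\pi_*(SPL)\xra{v^+_*}\pi_*(PL/O)$ is surjective (this was used already in the proof of Lemma \ref{lem:pi7PL}, via \cite[Theorem 3.1]{KM} and \cite[Theorem 6.48]{Lueck}); so $\Psi\circ v^+_*$ is a surjection $\pi_*(SPL)\onto\Theta_*$. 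Composing with $\Phi$ gives a surjection $\pi_*(SPL)\to\coker(J_*)$, and the first task is to identify this with a natural map. Here I would use that the map $SPL\to SG$ refines the projection $PL/O\to G/O$ in the sense that $q\circ u = (\text{natural map }SG\to G/O)$ is compatible with $v^+$ and the inclusion $PL/O\to G/O$; on torsion homotopy groups, $\Phi\circ\Psi\circ v^+_*$ agrees with $\bar{\Jst}\circ q_*\circ(\text{something})$, i.e.\ is induced by $u\colon SPL\to SG$ followed by the identification $\pi_*(SG)\xra{\Jst}\pi_*^s$ and the projection to $\coker(J_*)$. The precise bookkeeping of which isomorphism identifies $\Phi$ with the $J$-homomorphism cokernel is the point one must pin down from \cite{KM}.

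Second, I would bring in the splitting $\psi\colon SG\xra{\sim}\JS_\infty\times C_\infty$ of \eqref{eq:psi} and the induced splitting \eqref{eq:split_q} of $q_*$, $\coker(J_*) = q_*(\pi_*(\JS_\infty))\times q_*(\pi_*(C_\infty))$. By definition $SPL^J$ is the homotopy fiber of $\pr_{C_\infty}\circ u\colon SPL\to C_\infty$, so its homotopy groups map, under $u_*$ composed with the splitting, into the first summand $\pi_*(\JS_\infty)$ (modulo the usual long-exact-sequence caveat about the fiber inclusion being injective on $\pi_*$, which I address below). Chasing this through, the image $(\Phi\circ\Psi\circ v^+_*\circ i^J_*)(\pi_*(SPL^J))$ lands in $q_*(\pi_*(\JS_\infty)) = s_*({\rm Tors}(KO_*))$, which is exactly the defining condition for $\im(J)$-homotopy spheres in Definition \ref{def:ThetaJ}. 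Conversely, given an $\im(J)$-sphere $\Sigma$, pick a preimage $x\in\pi_*(SPL)$ under $v^+_*$; its image in $\pi_*(C_\infty)$ under $\pr_{C_\infty}\circ u$ maps to the $C_\infty$-component of $\Phi(\Sigma)$ in $\coker(J_*)$ under the isomorphism $q_*^C$ of \eqref{eq:split_q}, and this component is zero by hypothesis, hence $(\pr_{C_\infty}\circ u)_*(x)=0$, so $x$ lifts to $\pi_*(SPL^J)$ (again using that the fiber inclusion is injective here). Finally one checks that this accounts for all of $bP_{*+1}$ as well: elements of $bP$ map to $0$ in $\coker(J_*)$ hence certainly satisfy the $\im(J)$ condition, and they are hit because $v^+_*$ is surjective onto $\Psi(bP_{*+1})$ and such preimages automatically lie in the fiber $SPL^J$.

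The main obstacle I anticipate is \textbf{the injectivity of the fiber inclusion} $i^J_*\colon\pi_*(SPL^J)\to\pi_*(SPL)$, i.e.\ controlling the connecting map $\pi_{*+1}(C_\infty)\to\pi_*(SPL^J)$ in the fibration long exact sequence. This requires knowing that $(\pr_{C_\infty}\circ u)_*\colon\pi_{*+1}(SPL)\to\pi_{*+1}(C_\infty)$ is surjective in the relevant degrees. Since $u_*$ followed by projection is, after the identification $\pi_*(SG)\cong\pi_*^s$, the projection $\pi_*^s\to\coker(J_*)\to q_*(\pi_*(C_\infty))$ composed with $v^+_*$-related surjectivity, I expect this to follow from the fact that $\pi_*(SPL)\to\pi_*(SG)$ is surjective modulo the image of $J$ (equivalently from $\pi_*(G/PL)$ being concentrated in the right degrees, as recorded in the surgery-theory facts used in Lemma \ref{lem:pi7}); but verifying it cleanly in all degrees $*$ is the step that needs care. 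A secondary subtlety is matching the \emph{specific} splitting $s_*$ of \eqref{eq:split_q}, built from $\psi$, with the one implicitly used to define $SPL^J$ via $\pr_{C_\infty}$ — but since both are induced by the single equivalence $\psi$, this is a matter of unwinding definitions rather than a genuine difficulty.
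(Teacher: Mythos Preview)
Your proof is correct, and the injectivity concern you flag as the ``main obstacle'' is in fact a non-issue: the lemma is about the \emph{image} of $\Psi\circ v^+_*\circ i^J_*$, and exactness of the fibration long exact sequence already gives $\im(i^J_*)=\ker\bigl((\pr_{C_\infty}\circ u)_*\colon\pi_*(SPL)\to\pi_*(C_\infty)\bigr)$ regardless of whether $i^J_*$ is injective. Both inclusions then go through exactly as you describe, once the identification of $\Phi\circ\Psi^{-1}$ with the map $\pi_*(PL/O)\to{\rm Tors}(\pi_*(G/O))\cong\coker(J_*)$ (your ``first task'') is in place --- and this is standard surgery theory, as you anticipate.

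Your argument for the inclusion $\supseteq$ is actually cleaner than the paper's. The paper splits into two cases: first it handles $bP$-spheres directly (they map to $0$ in $\coker(J_*)$, so any lift to $\pi_*(SPL)$ lands in $\im(i^J_*)$), and then separately produces spheres with $\alpha=1$ in dimensions $8k{+}1$ and $8k{+}2$ by showing that $u_*\colon\pi_*(SPL)\to\pi_*(SG)$ is surjective in those degrees (its cokernel is governed by the Kervaire invariant, which vanishes outside dimensions $8k{+}6$). You instead treat an arbitrary $\im(J)$-sphere $\Sigma$ uniformly: lift it to $x\in\pi_*(SPL)$ via surjectivity of $v^+_*$, and use that $q^C_*$ is an \emph{isomorphism} to conclude $(\pr_{C_\infty}\circ u)_*(x)=0$ from the vanishing of the $C_\infty$-component of $\Phi(\Sigma)$. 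This bypasses the case split and the Kervaire-invariant input entirely, at the modest cost of relying explicitly on the isomorphism $q^C_*$ recorded just before the lemma.
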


\begin{proof}
{  We have that $\Phi\circ\Psi\circ v_*^+(\im(i_*^J))\subset
  q_*(\pi_*(\JS_\infty))$ by 
  naturality and because $u_*(\im(i_*^J))\subset \pi_*(\JS_\infty)$ by
  the splitting of $SG$. By definition of $\im(J)$-spheres therefore the left
  hand side is contained in the set of $\im(J)$-spheres.

  It remains to show that every $\im(J)$-sphere is contained in the left-hand
  side. First, we look at the summand $bP_{*+1}\subset \Theta^J_*$. Recall
  that the natural map $\pi_*(SPL)\to \pi_*(PL/O)$ is onto, corresponding to the fact that the
  stable tangent bundle of every homotopy sphere is trivial} 
  (see e.~g.~\cite[Theorem 6.45]{Lueck}). Every $bP$-sphere is
  mapped by $\Phi$ to $0$ in $\coker(J_*)$, therefore, using the splitting
  \eqref{eq:split_q} of $q_*$ and naturality, every lift of it to $\pi_*(SPL)$
  is mapped to $\pi_*(\JS_\infty)$ under $u_*$ and consequently lies in the image
  of $\pi_*(SPL^J)$.

  Because of Lemma \ref{lem:ImJ_and_Brumfiel}, it remains to find one sphere
  with 
  $\alpha$-invariant $1$ for each relevant dimension $8k{+}1$ and $8k{+}2$ in the
  left hand side.  We have to show that the restriction of the
  alpha-invariant to $u_*^{-1}(\pi_*(\JS_\infty))$ surjects onto ${\rm
    Tors(KO_*)}$. Note, however, that the cokernel of $\pi_*(SPL)\to
  \pi_*(SG)$ is the kernel of $\pi_*(SG)\to \pi_*(G/PL)$. {Via the
Kervaire-Milnor braid (see e.g.~\cite[Theorem 6.48]{Lueck})
the latter map can be identified with the Kervaire invariant which
  is known to be zero except for some dimensions $*=8k+6$ (compare \cite[Corollary 6.43]{Lueck}).} But these
  dimensions  are
  not relevant for us as in those dimensions ${\rm Tors}(KO_*)=0$. Therefore
  $u_*$ is surjective in the relevant dimensions, and because
  $\alpha(\pi_*(\JS_\infty))={\rm Tors(KO_*)}$ we are done.
\end{proof}

We require the following lemmas to prove Theorems \ref{thm:ImJ_spheres} and \ref{thm:bP_spheres}.
We defer their proofs to the end of the section.

\begin{lemma} \label{lem:SPLJ6}
The map $i^J_{6*} \colon \pi_7(SPL^J_6) \to \pi_7(SPL_6)$ 
is an isomorphism.
\end{lemma}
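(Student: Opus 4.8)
The plan is to compare the fibration defining $SPL_6^J$ with the one defining $SPL^J$, reduce the statement to a low-degree homotopy computation, and then invoke known stability ranges. First I would write down the map of fibration sequences
\[
\begin{CD}
SPL_6^J @>{i_6^J}>> SPL_6 @>{\pr_{C_\infty}\circ u\circ S}>> C_\infty\\
@VVV @VV{S}V @|\\
SPL^J @>{i^J}>> SPL @>{\pr_{C_\infty}\circ u}>> C_\infty,
\end{CD}
\]
and compare the resulting long exact sequences of homotopy groups in degrees $7$ and $8$. Since $\pi_7(C_\infty)$ and $\pi_8(C_\infty)$ are the relevant obstruction groups, I would identify these: $\pi_*(C_\infty)\iso \coker(J_*)/{\rm Tors}(KO_*)$, and in degrees $7,8$ this is trivial because $\coker(J_7)=0$ and $\coker(J_8)\iso{\rm Tors}(KO_8)$ has no complementary summand. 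Consequently $i^J_{6*}$ and $i^J_*$ are isomorphisms onto $\pi_7$ of the respective total spaces already at the level of $\pi_7$ (and one needs a little care in degree $8$ for exactness, but again $\pi_8(C_\infty)=0$ suffices). Thus the lemma reduces to the assertion that $S_*\colon \pi_7(SPL_6)\to\pi_7(SPL)$ is already injective — which is \emph{not} what we want; rather we only need that the square commutes and that $\pi_8(C_\infty)=0=\pi_7(C_\infty)$, from which the five lemma gives $\pi_7(SPL_6^J)\iso\pi_7(SPL_6)$ directly.

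Carrying this out more carefully: from the fibration $SPL_6^J\to SPL_6\to C_\infty$ one extracts the exact piece
\[
\pi_8(C_\infty)\to \pi_7(SPL_6^J)\xra{i^J_{6*}} \pi_7(SPL_6)\to \pi_7(C_\infty).
\]
So it suffices to show $\pi_7(C_\infty)=0$ and $\pi_8(C_\infty)=0$. For the latter I would argue that $\pi_*(C_\infty)\cong \coker(J_*)/{\rm Tors}(KO_*)$, as recorded in the discussion after \eqref{eq:SS}, so $\pi_8(C_\infty)=\coker(J_8)/{\rm Tors}(KO_8)$. Since $KO_8\cong\Z$ has trivial torsion and $\coker(J_8)\cong\Z/2\cong {\rm Tors}$... wait — here one must be slightly careful: $\coker(J_8)$ is $\Z/2$ generated by an element with nontrivial $\alpha$, hence lies entirely in the image of $s_*$, so the quotient by the $\JS$-part is zero; equivalently $\pi_8(C_\infty)=0$. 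Similarly $\coker(J_7)=0$ so $\pi_7(C_\infty)=0$. Plugging these into the exact sequence gives that $i^J_{6*}\colon \pi_7(SPL_6^J)\to\pi_7(SPL_6)$ is an isomorphism.

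The main obstacle I anticipate is pinning down $\pi_7(C_\infty)$ and $\pi_8(C_\infty)$ precisely enough: one needs the identification $\pi_*(C_\infty)\cong\coker(J_*)/{\rm Tors}(KO_*)$ together with the fact that in degrees $7$ and $8$ the group $\coker(J_*)$ is either zero or entirely accounted for by the $\JS_\infty$-summand (equivalently, by the splitting $s_*$ of the $\alpha$-invariant). This is essentially the content of Lemma \ref{lem:ImJ1} and the construction of $s_*$: in degree $8$, $\coker(J_8)\cong\Z/2$ maps isomorphically to ${\rm Tors}(KO_8)=0$... again the torsion of $KO_8$ is zero, so in fact $\coker(J_8)$ must be entirely in the $C_\infty$-part — so let me reconsider. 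The cleanest route is: $\coker(J_8)$ has no torsion-of-$KO$ summand because ${\rm Tors}(KO_8)=0$ and ${\rm Tors}(KO_7)=0$, hence $\pi_7(\JS_\infty)=\im(J_7)$ and $\pi_8(\JS_\infty)=\im(J_8)$ by \eqref{eq:J_gorup}; since $SPL\to SG$ is surjective on $\pi_7$ and $\pi_8$ onto $\pi_*(\JS_\infty)$ after projecting away $C_\infty$ (the obstruction being the Kervaire invariant, which vanishes in degrees $7,8$), the homotopy fiber $SPL^J$ and likewise $SPL_6^J$ capture all of $\pi_7$. I would verify that in degrees $\le 8$ the projection $\pi_*(SPL_6)\to\pi_*(C_\infty)$ is zero — which follows once $\pi_7(C_\infty)=\pi_8(C_\infty)=0$ — and conclude via the exact sequence. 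The delicate bookkeeping of which low-degree groups are zero is where I expect to spend the most effort, but all the needed inputs (computations of $\coker(J_*)$ in low degrees, vanishing of ${\rm Tors}(KO_7)={\rm Tors}(KO_8)=0$, and the definition of $C_\infty$) are available.
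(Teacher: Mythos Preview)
Your overall strategy---use the long exact sequence of the defining fibration $SPL_6^J\to SPL_6\to C_\infty$---is correct, and the vanishing $\pi_7(C_\infty)=0$ (since $\coker(J_7)=0$) does give surjectivity of $i^J_{6*}$. The gap is in the injectivity step: your repeated claim that $\pi_8(C_\infty)=0$ is false. Since $KO_8\cong\Z$ we have ${\rm Tors}(KO_8)=0$, while $\coker(J_8)\cong\Z/2$ (indeed $\pi_8^s\cong(\Z/2)^2$ and $\im(J_8)\cong\Z/2$). Hence $\pi_8(C_\infty)\cong\coker(J_8)/{\rm Tors}(KO_8)\cong\Z/2$. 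Your attempted patch, that the generator of $\coker(J_8)$ has nontrivial $\alpha$-invariant and so lies in the $\JS_\infty$-part, cannot work: the map $\alpha\colon\pi_8^s\to KO_8=\Z$ is zero on the torsion group $\pi_8^s$.

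What is therefore missing is exactly the nontrivial input the paper supplies: one must show that the connecting map $\pi_8(C_\infty)\to\pi_7(SPL_6^J)$ is zero, equivalently that $\pi_8(SPL_6)\to\pi_8(C_\infty)$ is onto. The paper does this by proving the stronger statement that $\pi_8(SPL_6)\to\pi_8(SG)\cong\pi_8^s$ is onto, via a chain of surjections $\pi_8(\wt{PL}_6)\to\pi_8(G_6)\xra{I}\pi_{14}(S^6)\to\pi_8^s$ together with $\pi_8(\wt{PL}_6)\to\pi_8(PL_6)$ onto; each step uses a specific reference (Toda's tables, Lemma~\ref{lem:msJ}, \cite[Proposition 5.6]{B-L}, and the injectivity of $\partial\colon\pi_8(G/\wt{PL})\to\pi_7(\wt{PL})$). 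Your outline does not contain any argument for this surjectivity, and without it the proof is incomplete.
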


\begin{lemma}[C.f.~{\cite[Theorem 12.18]{Adams}}] \label{lem:ImJ}
Let $g \in \pi_{8j+1}(\JS_2)$ have $\alpha(g) = 1$.
Then
%
$$\an{\eta_{8j+1}, 2_{8j+1}, g} \subset \{2, 6\} \subset \pi_{8j+3}(\JS_2) \cong \Z/8.$$
%
\end{lemma}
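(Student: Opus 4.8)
The plan is to isolate a single genuinely non-formal input --- the non-vanishing of $\eta^2 g$ --- and to obtain everything else by the Moore-space bookkeeping already used in the proof of Theorem~\ref{thm:PL/O}. First I would record the relevant homotopy groups. By Lemma~\ref{lem:ImJ1}, $\pi_{8j+1}(\JS_2) \cong \im(J_{8j+1})_{(2)} \oplus {\rm Tors}(KO_{8j+1}) \cong \Z/2 \oplus \Z/2$, while $\pi_{8j+2}(\JS_2) \cong {\rm Tors}(KO_{8j+2}) \cong \Z/2$ (since $\im(J_{8j+2}) = 0$) and $\pi_{8j+3}(\JS_2) \cong \im(J_{8j+3})_{(2)} \cong \Z/8$ (since ${\rm Tors}(KO_{8j+3}) = 0$), the last group being detected isomorphically by Adams' complex $e$-invariant. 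The hypothesis $\alpha(g) = 1$ forces $2g = 0$, so the Toda bracket $\an{\eta_{8j+1}, 2_{8j+1}, g}$ is defined; moreover $\eta g$ generates $\pi_{8j+2}(\JS_2) \cong \Z/2$, since $\alpha(\eta g) = \eta \cdot \alpha(g) = \eta \ne 0 \in KO_{8j+2}$ (multiplication by $\eta$ being an isomorphism $KO_{8j+1} \xrightarrow{\cong} KO_{8j+2}$, and $\alpha$ being a $\pi_*^s$-module map by the analogue of Lemma~\ref{lem:alpha_commutes} for $SG$). Under $\JS_2 \to SG_{(2)}$ and $I \colon \pi_*(SG) \cong \pi_*^s$ the element $g$ is carried, modulo a summand annihilated by $\eta^2$, to one of Adams' elements $\mu_{8j+1}$, so the input below may equivalently be read off from the $\mu$-family.

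Next I would compute the indeterminacy and the $2$-divisibility of the bracket exactly as in the proof of Theorem~\ref{thm:PL/O}. The indeterminacy of $\an{\eta_{8j+1}, 2_{8j+1}, g}$ is $\eta^* \pi_{8j+2}(\JS_2) + g_* \pi_2^s = \{0, \eta^2 g\}$, contained in the $2$-torsion subgroup $\{0, 4\} \subset \Z/8$ because $2\eta^2 g = (2\eta)(\eta g) = 0$. For the bracket itself I would invoke the Moore-space description of Section~\ref{subsec:Moore_space}: every element of the bracket is of the form $(\overline{S\eta})^*(\bar g)$ for a lift $\bar g \in \pi_{8j+1}^M(\JS_2)$ of $g$ and a lift $\overline{S\eta} \colon S^{8j+3} \to M_{8j+1}$ of $\eta_{8j+2}$; since $\eta_{8j+1}$ has order $2$, $\overline{S\eta}$ may be chosen to be a suspension, hence a co-$H$-map, so $(\overline{S\eta})^*$ is a homomorphism. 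As $\bar g$ lies in the $2$-group $\pi_{8j+1}^M(\JS_2)$ and $2\bar g = c^*(\eta g)$ by Lemma~\ref{lem:Moore_extension}, it has order dividing $4$; hence so does $(\overline{S\eta})^*(\bar g)$, and the whole bracket lies in $\{0, 2, 4, 6\} \subset \Z/8$.

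The third step is to double a bracket element. For $\beta = (\overline{S\eta})^*(\bar g)$ one computes $2\beta = (\overline{S\eta})^*(2\bar g) = (\overline{S\eta})^* c^*(\eta g) = (c \circ \overline{S\eta})^*(\eta g) = (S\eta)^*(\eta g) = \eta^2 g \in \pi_{8j+3}(\JS_2)$. Thus it only remains to see that $\eta^2 g \ne 0$: granting this, $\eta^2 g$ is the unique element of order $2$ in $\Z/8$, i.e.\ $\eta^2 g = 4$, whence $2\beta = 4$, so $\beta \in \{2, 6\}$ and the bracket is the coset $\beta + \{0, 4\} = \{2, 6\}$, as claimed.

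The one non-formal ingredient is precisely the non-vanishing $\eta^2 g \ne 0$ --- equivalently, that $\eta^2 \mu_{8j+1}$ is the element of order $2$ in $\im(J_{8j+3})_{(2)} \cong \Z/8$. This is the main obstacle, and it cannot be detected through the $\alpha$-invariant since $KO_{8j+3} = 0$; it is (the content of, or an immediate consequence of) \cite[Theorem 12.18]{Adams}, and can alternatively be extracted from Adams' formula for the complex $e$-invariant, which evaluates $e_\C(\eta^2 \mu_{8j+1})$ to a half-integral Bernoulli-number quantity congruent to $\tfrac{1}{2} \pmod \Z$, the $e$-invariant being injective on $\im(J_{8j+3})$. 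Everything else in the argument parallels the proofs of Theorems~\ref{thm:alpha_on_Toda} and~\ref{thm:PL/O} essentially verbatim.
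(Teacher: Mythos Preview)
Your argument is correct, but it takes a genuinely different route from the paper's own proof. The paper's proof is essentially a two-line citation: Adams \cite[Proposition 12.18]{Adams} computes the $e$-invariant of the Toda bracket $\an{\eta, 2, \mu_{8j+1}}$ in $\pi_*^s$ to be $\{\pm\tfrac14\}\subset\Q/\Z$, and then one uses (via \cite[Remark 5.6]{May}) that the $e$-invariant realises $\pi_*(\JS_2)$ as a retract of $(\pi_*^s)_{(2)}$; this pins down the image of the bracket in $\pi_{8j+3}(\JS_2)\cong\Z/8$ as $\{2,6\}$ directly.

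What you do instead is unpack the bracket through the Moore-space machinery of Section~\ref{subsec:Moore_space} and Lemma~\ref{lem:Moore_extension}, exactly paralleling the proof of Theorem~\ref{thm:PL/O}, to reduce everything to the single input $\eta^2 g\neq 0$. This is valid, and your reduction even makes explicit the passage from a general $g$ with $\alpha(g)=1$ to $\mu_{8j+1}$ (using that $\eta^2$ annihilates $\im(J_{8j+1})_{(2)}$), a step the paper's proof leaves implicit. However, note that your ``non-formal input'' $\eta^2 g = 4$ and the paper's input $e\bigl(\an{\eta,2,\mu}\bigr)=\pm\tfrac14$ are, by your own computation $2\beta=\eta^2 g$, logically equivalent statements; so you have not actually replaced the Adams black box, only reformulated it. The payoff of your approach is structural transparency and consistency with the rest of the paper; the payoff of the paper's approach is brevity, since Adams already packaged the answer in exactly the form needed.
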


\begin{proof}[Proof of Theorem \ref{thm:ImJ_spheres}]
From Lemmas \ref{lem:pi7PL}, \ref{lem:SPL6} and \ref{lem:SPLJ6} we deduce that there is an element $\aelt_{SPL_6^J} \in 2\pi_7(SPL_6^J)$ which maps
to $\aelt_{PL_6/O_6} \in \pi_7(PL_6/O_6)$ under the map induced by the composition
$SPL_6^J \to SPL_6 \to PL_6/O_6$.
The theorem follows from Theorem \ref{thm:meta-theorem}.
\end{proof}

\begin{proof}[Proof of Theorem \ref{thm:bP_spheres}]
The proof of Theorem \ref{thm:ImJ_spheres}, 
shows that there is an element $g_{SPL_6^J}$  of order two
in the Toda bracket
$\an{f, 2, a_{SPL_6^J}} \subset \pi_{8j+1}(SPL_6^J)$,
so that $\alpha(g_{SPL_6^J}) = 1$.
Choose an element 
\[ e\in 
\an{\eta_{8j+1}, 2_{8j+1}, g_{SPL_6^J}} \subset \pi_{8j+3}(SPL_6^J) \]
and consider the following diagram:
\begin{equation} \label{eq:Im(J)-spheres}
\xymatrix{  
& & \pi_{8j+3}(SO) \ar[d] 
\\
\pi_{8j+3}(SPL_6^J) \ar[r]^{i^J_{6*}} & 
\pi_{8j+3}(SPL_6) \ar[d]^{q_{SPL_6}} \ar[r]^{S_{SPL*}} &
\pi_{8j+3}(SPL) \ar[d] \ar[r]^(0.45){\wh q_{SPL*}} &
\pi_{8j+3}(J_2 \times C_2) 
\\
&
\pi_{8j+3}(PL_6/O_6) \ar[r]^(0.525){S_{PL/O*}} & 
\pi_{8j+3}(PL/O)
} 
\end{equation}
By Lemma \ref{lem:ImJ} 
$$ \wh q_{SPL*} \circ S_{SPL*} \circ i_*(e)\in
 \{2, 6\} \oplus \{0\} \subset \pi_{8j+3}(J_2 \times C_2)$$
and by a theorem of Brumfiel  \cite[Theorem 1.4]{Brumfiel1}
$$\pi_{8j+3}(SPL) \cong \Z \oplus \Z/8 \oplus \coker(J_{8j+3}).$$
Since $e$ is $2$-primary torsion,
it follows that 
$$S_*(e)\in \{0\} \oplus \{2, 6\} \oplus \{0\}.$$
As $\pi_{8j+3}(SO) \cong \Z$ by Bott periodicity, and $\pi_{8j+3}(PL/O) \cong \Theta_{8j+3}$
is finite by \cite{KM}, the torsion of $\pi_{8j+3}(SPL)$ injects into 
$\pi_{8j+3}(PL/O)$.  Since the $\Z/8$-summand maps trivially to $\coker(J_{8j+3})$,
$e$ must map into the subgroup $\Psi(bP_{8j+4}) \subset \pi_{8j+3}(PL/O)$, and hence
to a generator of $\Psi(_4bP_{8j+3})$.
The commutativity of diagram \eqref{eq:Im(J)-spheres} above 
shows that $e\in\im(S_{PL_6/O_6*})$, which proves the theorem.
\end{proof}

\begin{proof}[Proof of Lemma \ref{lem:SPLJ6}]

To see that $i^J_{6*} \colon \pi_7(SPL^J_6) \to \pi_7(SPL_6)$ is
an isomorphism,
we recall that by the definition of $SPL_6^J$ we have a commutative diagram of fibrations
\[ \xymatrix{ 
F \ar[d] \ar[r]^{\cong} &
F \ar[d] \\
SPL^J_6 \ar[d] \ar[r] &
SPL_6 \ar[d] \\
J_2 \times \ast \ar[r] &
J_2 \times C_2
} \]
where $F$ is the homotopy fiber of the map $SPL_6 \to SPL \to \JS_2 \times C_2$.
Now $\pi_7^s = \im(\Jhom_7)$ and so $\pi_7(C_2) = 0$, and so diagram chasing in the ladder made by the homotopy long exact sequences of the above fibrations gives the result, provided that
we can prove that the map $\pi_8(SPL_6) \to \pi_8(\JS_2 \times C_2)$ is onto, and we do this now.
We have $\pi_8(\JS_2 \times C_2) \cong \pi_8^s \cong (\Z/2)^2$ and by \cite[Theorem 7.1]{Toda} the stablisation
homomorphism $\pi_{14}(S^6) \to \pi_8^s$ is onto.
By Lemma \ref{lem:msJ}
$J \colon \pi_8(G_6) \to \pi_{14}(S^6)$ is onto and 
by \cite[Proposition 5.6]{B-L} the map $\pi_8(\wt{PL}_6) \to \pi_8(PL_6)$ is onto.
Hence it is enough to show that $\pi_8(\wt{PL}_6) \to \pi_8(G_6)$ is onto.
But this follows from the exact sequence
\[ \dots \to \pi_8(\wt{PL}_6) \to \pi_8(G_6) \to \pi_8(G_6/\wt{PL}_6) \xra{~\del~} \pi_7(\wt{PL}_6) \to \dots ~, \]
{since the boundary map
$\del \colon \pi_8(G/\wt{PL}) \to \pi_7(\wt{PL}) \cong \pi_7(PL)$ is injective
and  $\pi_8(G_6/\wt{PL}_6) \cong \pi_8(G/\wt{PL})$
(we  
saw both assertions in the proof of Lemma \ref{lem:SPL6}).}
\end{proof}

%
%

\begin{proof}[Proof of Lemma \ref{lem:ImJ}]
In \cite[Proposition 12.18]{Adams} Adams proves that the $e$-invariant
of the Toda bracket $\an{\eta, 2, \mu_{8j+1}}$ is the set $\{ \frac{1}{4}, \frac{-1}{4} \} \in \Q/\Z$.
By \cite[Remark 5.6]{May} 
the $e$-invariant gives a split surjection from $(\pi_*^s)_{(2)}$ onto $\pi_*(J_2)$, 
proving the lemma.
\end{proof}

\begin{appendix}
\section{The Gromoll filtration: table of values} \label{sec:Gromoll}

  We think that our results about the Gromoll filtration and the existence
  of elements rather deep down with non-trivial $\alpha$-invariant are
  interesting in their own right.  In this appendix we place them in context by
  assembling some results from the literature about the Gromoll
  filtration. This is an update of the corresponding table in \cite[Appendix
  A]{C-S}.  
  \remdc{Recall
  $\Gamma^{4i-1}_{bP} = \Sigma^{-1}(bP_{4i}) \subseteq \Gamma^{4i-1}$, 
 let $f_M \in \Gamma^{4i-1}_{bP}$ be the generator corresponding to the Milnor sphere}
 and \remdc{define the group} 
 \remdc{$\Gamma^{4i-1}_{(k)\,bP}$} $: = \Gamma^{4i-1}_{(k)} \cap
  \Gamma^{4i-1}_{bP}$. 
  In the following table, the new results of the
current article are printed red.

\medskip\noindent
  \begin{tabular}{|l|p{7.8cm}|}
 \hline
  $\Gamma_{(5)}^7 \cong \Z/{28}$ & $\Gamma_{(5)}^7 \neq \Gamma_{(4)}^7 \supset 0 =
  \Gamma_{(3)}^7$. The inequality for $\Gamma_{(4)}^7 \neq \Gamma_{(5)}^7$ is
                                   due to Weiss 
    \cite{W2} who proved that $\Gamma_{(4)}^7$ has at most $14$ elements.\\
    \hline 
  $\Gamma_{(6)}^8 \cong \Z/2$ & nothing known 
  \\ \hline
  $\Gamma_{(7)}^9 \cong (\Z/2)^3$ & \textcolor{red}{$\Gamma^9_{(6)}\supset
                                    \Z/2$, $\alpha(\Gamma^9_{(6)})=\Z/2$} \emph{by Theorem \ref{thm:1}}%
  \\ \hline
  $\Gamma_{(8)}^{10} \cong \Z/6$ & $\Gamma_{\textcolor{red}{(6)}}^{10}
                                    \supset \Z/2$, $\alpha(\Gamma^{10}_{\textcolor{red}{(6)}})=\Z/2$ \emph{by Theorem \ref{thm:1}}  
  \\ \hline
  $\Gamma_{(9)}^{11} \cong \Z/{992}$ & $\Gamma^{11}_{(8)} \subset \Z/{496}$ by
                                       \cite{W1},
                                       \textcolor{red}{$\Gamma^{11}_{(6)} \supset
                                       \Z/4$} \emph{by Theorem \ref{thm:bP_spheres}}
  \\ \hline
  $\Gamma_{(10)}^{12} = 0$ &\\ \hline
  $\Gamma_{(11)}^{13} \cong \Z/3$ & $\Gamma_{(11)}^{13} = \Gamma_{(10)}^{13} =
  \Gamma_{(9)}^{13}$ by \cite{A-B-K} 
  \\ \hline
  $\Gamma_{(12)}^{14} \cong \Z/2$ & nothing known\\ \hline
  $\Gamma_{(13)}^{15} \cong \Z/2 \oplus \Z/{8128}$ & $\Gamma_{(12)}^{15} \cong \Z/2
  \oplus \Z/{4064}$ by \cites{A-B-K, W1}\\ \hline
  $\Gamma_{(14)}^{16} \cong \Z/2$ &nothing known,  conjecturally $\Gamma_{(13)}^{16}
  = 0$\\ \hline
   $\Gamma_{(15)}^{17} \cong (\Z/2)^2$ &
                                         \textcolor{red}{$\Gamma^{17}_{(6)}\supset
                                         \Z/2$, $\alpha(\Gamma^{17}_{(6)})=\Z/2$} \emph{by Theorem \ref{thm:1}}\\
   \hline 
   $\Gamma_{(16)}^{18} \cong \Z/8 \oplus \Z/2$ & \emph{By Theorem \ref{thm:1}},
  $\alpha(\Gamma^{18}_{\textcolor{red}{(6)}})=\Z/2$. Because $\integers/8=\ker(\alpha)$,
   $\Gamma^{18}_{\textcolor{red}{(6)}} \supset \{0\}\oplus \Z/2$.\\ \hline
   $\Gamma^{8j+1}$,  $j\ge 1$  & \textcolor{red}{$\Gamma_{(6)}^{8j+1} \supset \Z/2$, $\alpha(\Gamma^{8j+1}_{(6)})=\Z/2$} \emph{by Theorem
    \ref{thm:1}}\\ \hline
   $\Gamma^{8j+2}$,   $j\ge 1$  & $\Gamma_{ \textcolor{red}{(6)}}^{8j+2} \supset \Z/2$, $\alpha(\Gamma_{ \textcolor{red}{(6)}}^{8j+2}) = \Z/2$ \emph{by Theorem
    \ref{thm:1}}\\ \hline
$\Gamma^{8j+3}_{bP}$,  $j\ge 1$    & 
\textcolor{red}{$\Gamma_{(6)\,bP}^{8j+3}  \supset \Z/4$} 
\emph{by Theorem
     \ref{thm:bP_spheres}} \\ \hline
   $\Gamma^{4i-1}_{bP}, i \geq 4$ & $\remdc{\Gamma^{4i-1}_{(2i+1)\,bP} \neq 0}$
   by \cite[Theorem 1.1]{A-B-K} \\ \hline
   $\Gamma^{4i-1}_{bP}, i \geq 2$ & $f_M \notin$ \remdc{$\Gamma^{4i-1}_{(4i-4)\, bP}$} by 
   \cite[2nd Corollary, p.\,888]{W1} \\ \hline
 
 \end{tabular}

\end{appendix}

\begin{bibdiv}
  \begin{biblist}
    \bib{Adams}{article}{
    AUTHOR = {Adams, J. F.},
     TITLE = {On the groups {$J(X)$}. {IV}},
   JOURNAL = {Topology},
    VOLUME = {5},
      YEAR = {1966},
     PAGES = {21--71},
      ISSN = {0040-9383},
  review   = {\MR{0198470}},
}

\bib{AmmannDahlHumbert_adv}{article}{
   author={Ammann, Bernd},
   author={Dahl, Mattias},
   author={Humbert, Emmanuel},
   title={Surgery and harmonic spinors},
   journal={Adv. Math.},
   volume={220},
   date={2009},
   number={2},
   pages={523--539},
   issn={0001-8708},
   review={\MR{2466425}},
   doi={10.1016/j.aim.2008.09.013},
}
\bib{AmmannDahlHumbert2}{article}{
   author={Ammann, Bernd},
   author={Dahl, Mattias},
   author={Humbert, Emmanuel},
   title={Harmonic spinors and local deformations of the metric},
   journal={Math. Res. Lett.},
   volume={18},
   date={2011},
   number={5},
   pages={927--936},
   issn={1073-2780},
   review={\MR{2875865}},
   doi={10.4310/MRL.2011.v18.n5.a10},
}
		
\bib{A-B-K}{article}{
   author={Antonelli, P.},
   author={Burghelea, D.},
   author={Kahn, P. J.},
   title={Gromoll groups, ${\rm Diff}S\,^{n}$ and bilinear constructions
   of exotic spheres},
   journal={Bull. Amer. Math. Soc.},
   volume={76},
   date={1970},
   pages={772--777},
   issn={0002-9904},
   review={\MR{0283809}},
}
\bib{Baer}{article}{
   author={B{\"a}r, Christian},
   title={Metrics with harmonic spinors},
   journal={Geom. Funct. Anal.},
   volume={6},
   date={1996},
   number={6},
   pages={899--942},
   issn={1016-443X},
   review={\MR{1421872}},
   doi={10.1007/BF02246994},
}
\bib{BotvinnikEbertRandal-Williams}{article}{
  author={Botvinnik, Boris},
  author={Ebert, Johannes},
  author={Randal-Williams, Oscar},
  title={Infinite loop spaces and positive scalar curvature},
  journal={Invent. Math.},
  pages={1--87},
  year={2017},
  publisher={Springer}
}
 
\bib{Brumfiel1}{article}{
   author={Brumfiel, G.},
   title={On the homotopy groups of ${\rm BPL}$ and ${\rm PL/O}$},
   journal={Ann. of Math. (2)},
   volume={88},
   date={1968},
   pages={291--311},
   issn={0003-486X},
   review={\MR{0234458}},
}
\bib{Brumfiel2}{article}{
   author={Brumfiel, G.},
   title={On the homotopy groups of $BPL$ and $PL/O$. II},
   journal={Topology},
   volume={8},
   date={1969},
   pages={305--311},
   issn={0040-9383},
   review={\MR{0248830}},
}			
\bib{B-L}{article}{
   author={Burghelea, Dan},
   author={Lashof, Richard},
   title={The homotopy type of the space of diffeomorphisms. I, II},
   journal={Trans. Amer. Math. Soc.},
   volume={196},
   date={1974},
   pages={1--36; ibid. 196\ (1974), 37--50},
   issn={0002-9947},
   review={\MR{0356103}},
}

\bib{C-S}{article}{
   author={Crowley, Diarmuid},
   author={Schick, Thomas},
   title={The Gromoll filtration, $KO$-characteristic classes and metrics of
   positive scalar curvature},
   journal={Geom. Topol.},
   volume={17},
   date={2013},
   number={3},
   pages={1773--1789},
   issn={1465-3060},
   review={\MR{3073935}},
   doi={10.2140/gt.2013.17.1773},
}

\bib{Ebert}{article}{
   author={Ebert, Johannes},
   title={The two definitions of the index difference},
    journal={Trans. Americ. Math. Soc.},
  volume={369},
  number={10},
  pages={7469--7507},
  year={2017},
}		
\bib{G-RW}{article}{
   author={Galatius, S{\o}ren},
   author={Randal-Williams, Oscar},
   title={Stable moduli spaces of high-dimensional manifolds},
   journal={Acta Math.},
   volume={212},
   date={2014},
   number={2},
   pages={257--377},
   issn={0001-5962},
   review={\MR{3207759}},
   doi={10.1007/s11511-014-0112-7},
}
	
\bib{G}{article}{
   author={Gromoll, Detlef},
   title={Differenzierbare Strukturen und Metriken positiver Kr\"ummung auf
   Sph\"aren},
   language={German},
   journal={Math. Ann.},
   volume={164},
   date={1966},
   pages={353--371},
   issn={0025-5831},
   review={\MR{0196754}},
 }
	
\bib{HSS}{article}{
   author={Hanke, Bernhard},
   author={Schick, Thomas},
   author={Steimle, Wolfgang},
   title={The space of metrics of positive scalar curvature},
   journal={Publ. Math. Inst. Hautes \'Etudes Sci.},
   volume={120},
   date={2014},
   pages={335--367},
   issn={0073-8301},
   review={\MR{3270591}},
   doi={10.1007/s10240-014-0062-9},
}
		
\bib{H}{article}{
   author={Hitchin, Nigel},
   title={Harmonic spinors},
   journal={Advances in Math.},
   volume={14},
   date={1974},
   pages={1--55},
   issn={0001-8708},
   review={\MR{0358873}},
}

\bib{K}{article}{
   author={Kervaire, Michel A.},
   title={A note on obstructions and characteristic classes},
   journal={Amer. J. Math.},
   volume={81},
   date={1959},
   pages={773--784},
   issn={0002-9327},
   review={\MR{0107863}},
}
	\bib{KM}{article}{
   author={Kervaire, Michel A.},
   author={Milnor, John W.},
   title={Groups of homotopy spheres. I},
   journal={Ann. of Math. (2)},
   volume={77},
   date={1963},
   pages={504--537},
   issn={0003-486X},
   review={\MR{0148075}},
}
\bib{KS}{book}{
   author={Kirby, Robion C.},
   author={Siebenmann, Laurence C.},
   title={Foundational essays on topological manifolds, smoothings, and
   triangulations},
   note={With notes by John Milnor and Michael Atiyah;
   Annals of Mathematics Studies, No. 88},
   publisher={Princeton University Press, Princeton, N.J.; University of
   Tokyo Press, Tokyo},
   date={1977},
   pages={vii+355},
   review={\MR{0645390}},
}


\bib{Lance}{article}{
   author={Lance, Timothy},
   title={Differentiable structures on manifolds},
   conference={
      title={Surveys on surgery theory, Vol. 1},
   },
   book={
      series={Ann. of Math. Stud.},
      volume={145},
      publisher={Princeton Univ. Press, Princeton, NJ},
   },
   date={2000},
   pages={73--104},
   review={\MR{1747531}},
}
			
\bib{LM}{book}{
   author={Lawson, H. Blaine, Jr.},
   author={Michelsohn, Marie-Louise},
   title={Spin geometry},
   series={Princeton Mathematical Series},
   volume={38},
   publisher={Princeton University Press, Princeton, NJ},
   date={1989},
   pages={xii+427},
   isbn={0-691-08542-0},
   review={\MR{1031992}},
}

\bib{Lueck}{article}{
   author={L{\"u}ck, Wolfgang},
   title={A basic introduction to surgery theory},
   conference={
      title={Topology of high-dimensional manifolds, No. 1, 2},
      address={Trieste},
      date={2001},
   },
   book={
      series={ICTP Lect. Notes},
      volume={9},
      publisher={Abdus Salam Int. Cent. Theoret. Phys., Trieste},
   },
   date={2002},
   pages={1--224},
   review={\MR{1937016}},
}

\bib{MM}{book}{
   author={Madsen, Ib},
   author={Milgram, R. James},
   title={The classifying spaces for surgery and cobordism of manifolds},
   series={Annals of Mathematics Studies},
   volume={92},
   publisher={Princeton University Press, Princeton, N.J.; University of
   Tokyo Press, Tokyo},
   date={1979},
   pages={xii+279},
   isbn={0-691-08225-1},
   review={\MR{548575}},
}
		

\bib{May}{book}{
   author={May, J. Peter},
   title={$E_{\infty }$ ring spaces and $E_{\infty }$ ring spectra},
   series={Lecture Notes in Mathematics, Vol. 577},
   note={With contributions by Frank Quinn, Nigel Ray, and J\o rgen
   Tornehave},
   publisher={Springer-Verlag, Berlin-New York},
   date={1977},
   pages={268},
   review={\MR{0494077}},
}

\bib{May2}{article}{
   author={May, J. P.},
   title={The spectra associated to ${\cal I}$-monoids},
   journal={Math. Proc. Cambridge Philos. Soc.},
   volume={84},
   date={1978},
   number={2},
   pages={313--322},
   issn={0305-0041},
   review={\MR{0488033}},
}

\bib{M}{article}{
   author={Milnor, John W.},
   title={Remarks concerning spin manifolds},
   conference={
      title={Differential and Combinatorial Topology (A Symposium in Honor
      of Marston Morse)},
   },
   book={
      publisher={Princeton Univ. Press, Princeton, N.J.},
   },
   date={1965},
   pages={55--62},
   review={\MR{0180978}},
}
\bib{Morlet}{article}{
   author={Morlet, Claude},
   title={Isotopie et pseudo-isotopie},
   language={French},
   journal={C. R. Acad. Sci. Paris S\'er. A-B},
   volume={266},
   date={1968},
   pages={A559--A560},
   review={\MR{0236935}},
}
		
\bib{N}{article}{
   author={Neisendorfer, Joseph A.},
   title={Homotopy groups with coefficients},
   journal={J. Fixed Point Theory Appl.},
   volume={8},
   date={2010},
   number={2},
   pages={247--338},
   issn={1661-7738},
   review={\MR{2739026}},
   doi={10.1007/s11784-010-0020-1},
}		
\bib{Rosenberg}{article}{
   author={Rosenberg, Jonathan},
   title={$C^{\ast} $-algebras, positive scalar curvature, and the Novikov
   conjecture},
   journal={Inst. Hautes \'Etudes Sci. Publ. Math.},
   number={58},
   date={1983},
   pages={197--212 (1984)},
   issn={0073-8301},
   review={\MR{720934 (85g:58083)}},
}
\bib{RS}{article}{
   author={Rourke, C. P.},
   author={Sanderson, B. J.},
   title={Block bundles. III. Homotopy theory},
   journal={Ann. of Math. (2)},
   volume={87},
   date={1968},
   pages={431--483},
   issn={0003-486X},
   review={\MR{0232404}},
}
\bib{Sch}{article}{
   author={Schick, Thomas},
   title={A counterexample to the (unstable) Gromov-Lawson-Rosenberg
   conjecture},
   journal={Topology},
   volume={37},
   date={1998},
   number={6},
   pages={1165--1168},
   issn={0040-9383},
   review={\MR{1632971 (99j:53049)}},
   doi={10.1016/S0040-9383(97)00082-7},
 }
    \bib{ICM}{inproceedings}{
   author={Schick, Thomas}, title={The topology of scalar curvature},
  booktitle={Proceedings of the International Congress of Mathematicians Seoul
    2014, VOLUME II},
  pages={1285-1308},
note={arXiv:1405.4220},
  year={2014},}

\bib{Stolz}{article}{
   author={Stolz, Stephan},
   title={Simply connected manifolds of positive scalar curvature},
   journal={Ann. of Math. (2)},
   volume={136},
   date={1992},
   number={3},
   pages={511--540},
   issn={0003-486X},
   review={\MR{1189863 (93i:57033)}},
   doi={10.2307/2946598},
}
\bib{Toda}{book}{
   author={Toda, Hirosi},
   title={Composition methods in homotopy groups of spheres},
   series={Annals of Mathematics Studies, No. 49},
   publisher={Princeton University Press, Princeton, N.J.},
   date={1962},
   pages={v+193},
   review={\MR{0143217}},
}
\bib{Waterstraat}{article}{
   author={Waterstraat, Nils},
   title={A remark on the space of metrics having nontrivial harmonic
   spinors},
   journal={J. Fixed Point Theory Appl.},
   volume={13},
   date={2013},
   number={1},
   pages={143--149},
   issn={1661-7738},
   review={\MR{3071945}},
   doi={10.1007/s11784-013-0096-5},
}
\bib{W1}{article}{
   author={Weiss, Michael},
   title={Sph\`eres exotiques et l'espace de Whitehead},
   language={French, with English summary},
   journal={C. R. Acad. Sci. Paris S\'er. I Math.},
   volume={303},
   date={1986},
   number={17},
   pages={885--888},
   issn={0249-6291},
   review={\MR{870913 (87m:57038)}},
}	
\bib{W2}{article}{
   author={Weiss, Michael},
   title={Pinching and concordance theory},
   journal={J. Differential Geom.},
   volume={38},
   date={1993},
   number={2},
   pages={387--416},
   issn={0022-040X},
   review={\MR{1237489 (95a:53057)}},
}

\bib{W4}{unpublished}{
   author={Weiss, Michael},
   title={Dalian notes on rational Pontrjagin classes},
   note={arXiv: http://arxiv.org/pdf/1507.00153v3.pdf},
   year={2016},
}

\bib{Whitehead}{book}{
   author={Whitehead, George W.},
   title={Elements of homotopy theory},
   series={Graduate Texts in Mathematics},
   volume={61},
   publisher={Springer-Verlag, New York-Berlin},
   date={1978},
   pages={xxi+744},
   isbn={0-387-90336-4},
   review={\MR{516508}},
}

\bib{Williamson}{article}{
   author={Williamson, Robert E., Jr.},
   title={Cobordism of combinatorial manifolds},
   journal={Ann. of Math. (2)},
   volume={83},
   date={1966},
   pages={1--33},
   issn={0003-486X},
   review={\MR{0184242}},
}

  \end{biblist}
\end{bibdiv}

\end{document}